\newtheorem{theorem}{Theorem}
\newtheorem{proposition}{Proposition}
\newtheorem{lemma}{Lemma}
\newtheorem{corollary}{Corollary}
\theoremstyle{definition}
\newtheorem{definition}{Definition}
\theoremstyle{remark}
\newtheorem{remark}{Remark}
\def\NL{\mathcal N}
\def\1{\mathbf 1}
\def\RR{\mathbb R}
\def\NN{\mathbb N}
\def\ZZ{\mathbb Z}
\def\ba{\mathbf a}
\def\bq{\mathbf q}
\def\bl{{\boldsymbol l}}
\def\bm{{\boldsymbol m}}
\def\by{\mathbf y}
\def\bx{\mathbf x}
\def\bz{\mathbf z}
\def\bt{\mathbf t}
\def\bc{\mathbf c}
\def\bt{\mathbf t}
\def\bu{\mathbf u}
\def\bv{\mathbf v}
\def\bw{\mathbf w}
\def\bbeta{\boldsymbol\beta}
\def\xxi{\boldsymbol\xi}
\def\oomega{\boldsymbol\omega}
\def\ttheta{\boldsymbol\theta}
\def\ttau{\boldsymbol\tau}
\def\aalpha{{\boldsymbol\alpha}}
\def\ssigma{{\boldsymbol\sigma}}
\def\nnu{\boldsymbol\nu}
\def\ttR{\text{\tt R}}
\def\ttM{\text{\tt M}}
\def\ttT{\text{\tt T}}
\def\ttA{\text{\tt A}}
\def\ttB{\text{\tt B}}
\def\ttI{\text{\tt I}}
\def\mcT{\mathcal T}
\def\mcL{\mathcal L}
\def\mcM{\mathcal M}
\def\mcH{\mathcal H}
\def\mcN{\mathcal N}
\def\mcF{\mathcal F}
\def\mcD{\mathcal D}
\def\mcA{\mathcal A}
\def\ttA{{\tt A}}
\def\bk{{\boldsymbol k}}
\def\bj{{\boldsymbol j}}
\def\pf{\Pi f}
\def\proj{\widehat{\Pi f}_n}
\def\Tr{\mathop{\rm Tr}}
\def\matnorm#1{|\!|\!|#1|\!|\!|}
\begin{document}

\begin{frontmatter}
\title{Minimax testing of a composite null hypothesis defined via a quadratic functional in the model of regression}

\runtitle{Minimax testing of hypotheses defined via quadratic functionals}

\begin{aug}

\author{\fnms{La\"etitia} \snm{Comminges}\ead[label=e1]{laetitia.comminges@enpc.fr}}
\and
\author{\fnms{Arnak S.} \snm{Dalalyan}\ead[label=e2]{dalalyan@imagine.enpc.fr}}
\address{LIGM/IMAGINE and ENSAE/CREST/GENES\\
          \printead{e1,e2}}
\affiliation{Universit\'e Paris Est/ ENPC}

\runauthor{Comminges, L. and Dalalyan, A.}
\end{aug}

\begin{abstract}
We consider the problem of testing a particular type of composite null hypothesis under a nonparametric multivariate regression model.
For a given quadratic functional $Q$, the null hypothesis states that the regression function $f$ satisfies the constraint $Q[f]=0$, while
the alternative corresponds to the functions for which $Q[f]$ is bounded away from zero.
On the one hand, we provide minimax rates of testing and the exact separation constants, along with a sharp-optimal testing procedure, for
diagonal and nonnegative quadratic functionals. We consider smoothness classes of ellipsoidal form and check that our conditions are
fulfilled in the particular case of ellipsoids corresponding to anisotropic Sobolev classes. In this case, we present a closed form of
the minimax rate and the separation constant. On the other hand, minimax rates for quadratic functionals which are neither positive nor
negative makes appear two different regimes: ``regular'' and ``irregular''. In the ``regular" case, the minimax rate is equal to $n^{-1/4}$
while in the ``irregular'' case, the rate depends on the smoothness class and is slower than in the ``regular'' case. We apply this to the
problem of testing the equality of Sobolev norms of two functions observed in noisy environments.
\end{abstract}
\begin{keyword}[class=AMS]
\kwd[Primary ]{62G08}
\kwd{62G10}
\kwd[; secondary ]{62G20}
\end{keyword}

\begin{keyword}
\kwd{Nonparametric hypotheses testing}
\kwd{sharp asymptotics}
\kwd{separation rates}
\kwd{minimax approach}
\kwd{high-dimensional regression}
\end{keyword}

\end{frontmatter}

\section{Introduction}

\subsection{Problem statement}
Consider the nonparametric regression model with multi-dimensional random design: We observe $(x_i,\bt_i)_{i=1,\ldots,n}$ obeying the relation
\begin{equation}\label{eq:1}
x_i=f(\bt_i)+\xi_i, \quad i=1,\ldots, n,
\end{equation}
where $\bt_i\in \Delta\subset \RR^d$ are random design points, $1\leq d<\infty$, $f:\Delta\to\RR$ is the unknown regression function and $\xi_i$s
represent observation noise. Throughout this work, we assume that the vectors $\bt_i=(t_i^1,\ldots,t_i^d)$, for $i=1,\ldots,n$, are independent and identically
distributed with uniform distribution on $\Delta=[0,1]^d$, which is equivalent to $t_i^k\stackrel{\text{iid}}{\sim}\mathcal{U}(0,1)$. Furthermore,
conditionally on $\mcT_n=\{\bt_1, \ldots, \bt_n\}$, the variables $\xi_1,\ldots,\xi_n$ are assumed i.i.d.\ with zero mean and variance $\tau^2$,
for some known $\tau\in(0,\infty)$.

Let $L_2(\Delta)$ denote the Hilbert space  of all squared integrable functions defined on $\Delta$. Assume that we are given two
disjoint subsets $\mcF_0$ and $\mcF_1$ of $L_2(\Delta)$. We are interested in analyzing the problem of testing hypotheses:
\begin{equation}\label{eq:2}
H_0 : f\in \mcF_0\qquad\text{against}\qquad H_1 : f\in \mcF_1.
\end{equation}
To be more precise, let us set $\bz_i=(x_i,\bt_i)$ and denote by $P_f$ be the probability distribution of the data vector $(\bz_1,\ldots,\bz_n)$
given by (\ref{eq:1}). The expectation with respect to $P_f$ is denoted by $E_f$. The goal is to design a testing procedure
$\phi_n:(\RR\times\Delta)^n\to \{0,1\}$ for which we are able to establish theoretical guarantees in terms of the cumulative error rate
(the sum of the probabilities of type I and type II errors):
\begin{equation}\label{eq:3}
\gamma_n(\mcF_0, \mcF_1, \phi_n)=\underset{f\in\mcF_0}{\sup}P_f(\phi_n=1)+\underset{f\in\mcF_1}{\sup}P_f(\phi_n=0).
\end{equation}
To measure the statistical complexity of this testing problem, it is relevant to analyze the minimax error rate
\begin{equation}\label{eq:4}
\gamma_n(\mcF_0,\mcF_1) = \inf_{\phi_n} \gamma_n(\mcF_0, \mcF_1, \phi_n),
\end{equation}
where $\inf_{\phi_n}$ denotes the infimum over all testing procedures.

The focus in this paper is on a particular type of null hypotheses $H_0$ that can be defined as the set of functions lying in the
kernel of some quadratic functional $Q:L_2(\Delta)\to\RR$, \textit{i.e.},
$\mcF_0\subset \big\{f \in  L_2(\Delta) : Q[f]=0\big\}$.
As described later in this section, this kind of null hypotheses naturally arises in several problems including variable selection,
testing partial linearity of a regression function or the equality of norms of two signals. Then, it is appealing to define the
alternative as the set of functions satisfying $|Q[f]|>\rho^2$ for some $\rho>0$. However, without further assumptions on the nature
of functions $f$, it is impossible to design consistent testing procedures for discriminating between $\mcF_0$ and $\mcF_1$. One
approach to making the problem meaningful is to assume that the function $f$ belongs to a smoothness class. Typical examples of
smoothness classes are Sobolev and H\"older classes, Besov bodies or balls in reproducing kernel Hilbert spaces.

In the present work, we assume that the function $f$ belongs to a smoothness class $\Sigma$ that can be seen as an ellipsoid in
the infinite-dimensional space $L_2(\Delta)$. Thus, the null and the alternative are defined by
\begin{equation}\label{eq:5}
\mcF_0 = \Big\{f \in  \Sigma : Q[f]=0\Big\},\qquad \mcF_1 = \mcF_1(\rho)=\Big\{f \in  \Sigma : |Q[f]|\ge \rho^2\Big\}.
\end{equation}
One can take note that both hypotheses are composite and nonparametric.

\subsection{Background on minimax rate- and sharp-optimality}

Given the observations $(x_i,\bt_i)_{i=1,\ldots,n}$, we consider the problem of testing the composite hypothesis $\mcF_0$
against the nonparametric alternative $\mcF_1(\rho)$ defined by (\ref{eq:5}). The goal here is to obtain, if possible, both rate and sharp
asymptotics for the cumulative error rate in the minimax setup. These notions are defined as follows. For a fixed small number
$\gamma\in(0,1)$, the function $r_n^*$ is called minimax rate of testing if:
\begin{itemize}
\item there exists $C'>0$  such that  $\forall\,C< C'$, we have
$\liminf_{n\to \infty}\limits  \gamma_n(\mcF_0, \mcF_1(Cr_n^*))\geq \gamma$,
\item there exists $C^{''}>0$ and a test $\phi_n$ such that $\forall\,C> C^{''}$,
$\limsup_{n\to \infty}\limits  \gamma_n(\mcF_0, \mcF_1(Cr_n^*), \phi_n)\leq \gamma$.
\end{itemize}
A testing procedure $\phi_n$ is called minimax rate-optimal if $\limsup_{n\to \infty}\gamma_n(\mcF_0, \mcF_1(Cr_n^*), \phi_n)\leq \gamma$ for some
$C>0$. Note that the minimax rate and the rate-optimal test may depend on the prescribed significance
level $\gamma$. However, in most situations this dependence cancels out from the rate and appears
only in the constants. If the constants $C'$ and $C''$ coincide, then their common value is called
exact separation constant and any test satisfying the second condition is called minimax sharp optimal.
The minimax rate $r_n^*$ is actually not uniquely defined, but the product of the minimax rate with the
exact separation constant is uniquely defined up to an asymptotic equivalence.
For more details on minimax hypotheses testing we refer to \citep{ingster2003nonparametric}.

While minimax rate-optimality is a desirable feature for a testing procedure, it may still lead to
overly conservative tests. A (partial) remedy for this issue is to consider sharp asymptotics of the
error rate. In fact, one can often prove that when $n\to\infty$,
\begin{equation}\label{gauss}
\gamma_n(\mcF_0, \mcF_1(\rho))=2\Phi(-u_n(\rho))+o(1),
\end{equation}
where $\Phi$ is the c.d.f.\ of the standard Gaussian distribution, $u_n(\cdot)$ is some ``simple'' function from
$\RR_+$ to $\RR$ and $o(1)$ is a term tending to zero uniformly in $\rho$ as $n\to\infty$. This relation
implies that by determining $r_n^*$ as a solution with respect to $\rho$ to the equation
$u_n(\rho)=z_{1-\gamma/2}$---where $z_\alpha$ stands for the $\alpha$-quantile of the standard Gaussian
distribution---we get not only the minimax rate, but also the exact separation constant. When relation
(\ref{gauss}) is satisfied, we say that Gaussian asymptotics hold.

\subsection{Overview of the main contributions}

Our contributions focus on the case where the smoothness class $\Sigma$ is an ellipsoid in $L_2(\Delta)$ and
the quadratic functional $Q$ admits a diagonal form in the orthonormal basis corresponding to the directions
of the axes of the ellipsoid $\Sigma$. To be more precise, let $\mathcal{L}$ be a countable set and
$\{\varphi_l\}_{l\in\mathcal{L}}$ be an orthonormal system in $L_2(\Delta)$. For a function $f\in L_2(\Delta)$,
let $\ttheta[f]=\{\theta_l[f]\}_{l\in\mathcal{L}}$ be the generalized Fourier coefficients with respect to
this system, \textit{i.e.}, $\theta_l[f]=\langle f,\varphi_l\rangle$, where $\langle\cdot,\cdot\rangle$ denotes the inner
product in $L_2(\Delta)$. The functional sets  $\Sigma\subset L_2(\Delta)$ under consideration are subsets of ellipsoids
with directions of axes $\{\varphi_l\}_{l\in\mathcal{L}}$ and with coefficients $\bc=\{c_l\}_{l\in\mathcal{L}}\in\RR_+^{\mathcal{L}}$:
\begin{equation}\label{eq:6}
\Sigma\subset\Big\{f=\sum_{l\in\mathcal{L}}\nolimits \theta_l[f]\varphi_l:\ \sum_{l\in\mathcal{L}}\nolimits c_l\theta_l[f]^2\leq 1\Big\}.
\end{equation}
The diagonal quadratic functional is defined by a set of coefficients $\bq=\{q_l\}_{l\in\mathcal{L}}$:
$Q[f]=\sum_{l\in\mathcal{L}}\nolimits q_l\theta_l[f]^2$.
Note that if $Q$ is definite positive, \textit{i.e.}, $q_l>0$ for all $l\in\mcL$, then the null hypothesis
becomes $f=0$ and the problem under consideration is known as detection problem. However, the goal of the present
work is to consider more general types of diagonal quadratic functionals. Namely, two situations are examined: (a)
all the coefficients $q_l$ are nonnegative and (b) the two sets $\mcL_+=\{l\in\mcL:q_l>0\}$ and $\mcL_-=\{l\in\mcL:q_l<0\}$
are nonempty.

In the first situation, we establish Gaussian asymptotics of the cumulative error rate and propose a minimax sharp-optimal
test. Under some conditions, we show that the sequence\footnote{We denote by ${\|\cdot\|}_2$ and by $\langle\cdot,\cdot\rangle$
the usual norm and the inner product in $\ell_2(\mcL)$, the space of squared summable arrays indexed by $\mcL$.}
\begin{equation}\label{eq:8}
r_{n,\gamma}^*=\min\Big\{\rho>0: \inf_{\bv\in\RR_+^\mcL:\langle \bv,\bc\rangle \le 1;\langle \bv,\bq\rangle \ge \rho^2} \|\bv\|_2^2\ge 8n^{-2}z_{1-\gamma/2}\Big\}
\end{equation}
provides the minimax rate of testing with constants $C'=C''=1$. This result is instantiated to some examples motivating our interest for
testing the hypotheses (\ref{eq:5}). One example, closely related to the problem of variable selection~\citep{CD11a}, is testing the
relevance of a particular covariate in high-dimensional regression. This problem is considered in a more general setup corresponding to
testing that a partial derivative of order $\aalpha=(\alpha_1,\ldots,\alpha_d)$, denoted by
$\partial^{\alpha_1+\ldots+\alpha_d}f/\partial t_1^{\alpha_1}\ldots\partial t_d^{\alpha_d}$, is identically equal to zero against the hypothesis
that this derivative is significantly different from 0.
As a consequence of our main result, we show that if $f$ lies in the anisotropic Sobolev ball of smoothness $\ssigma=(\sigma_1,\ldots,\sigma_d)$,
and we set $\delta=\sum_{i=1}^d\alpha_i/\sigma_i$, $\bar\sigma=\big(\frac1d\sum_{i=1}^d\sigma_i^{-1}\big)^{-1}$, then the minimax optimal-rate is
$r_n^*=n^{-2\bar\sigma(1-\delta)/(4\bar\sigma+d)}$ provided that $\delta<1$ and $\bar\sigma>d/4$. Furthermore, we derive Gaussian asymptotics and
exhibit the exact separation constant in this problem.

The second situation we examine in this paper concerns the case where the cardinalities of both $\mcL_+$ and $\mcL_-$ are nonzero. A typical
application of this kind of problem is testing the equality of the norms of two signals observed  in noisy environments. In this set-up, we
provide minimax rates of testing and exhibit the presence of two regimes that we call regular regime and irregular regime. In the regular regime,
the minimax rate is  $r_n^*=n^{-1/4}$, while in the irregular case it may be of the form $n^{-a}$ with an $a<1/4$ that depends on the degree of
smoothness of the functional class.

Note that all our results are non-adaptive: our testing procedures make explicit use of the smoothness characteristics of the function $f$.
Adaptation to the unknown smoothness for the problem we consider is an open question for which the works \citep{Spok96,Gayraud_Pouet2} may
be of valuable guidance.

\subsection{Relation to previous work}

Starting from the seminal papers by \citet{Ermakov90} and \citet{Ingster93_1,Ingster93_2,Ingster93_3}, minimax
testing of nonparametric hypotheses received a great deal of attention. A detailed review of the literature
on this topic being out of scope of this section, we only focus on discussing those previous results which
are closely related to the present work.  The goal here is to highlight the common points and the most striking
differences with the existing literature. The major part of the statistical inference for nonparametric hypotheses testing was developed for
the Gaussian white noise model (GWNM) and its equivalent formulation as Gaussian sequence model (GSM). As recent
references for the problem of testing a simple hypothesis in these models, we cite \citep{Ermakov11,Ingster_12},
where the reader may find further pointers to previous work. In the present work, the null hypothesis defined by
(\ref{eq:5}) is composite and nonparametric. Early references for minimax results for composite null hypotheses
include  \citep{Horowitz01,Pouet01,Gayraud_Pouet1,Gayraud_Pouet2}, where the case of parametric null hypothesis is
of main interest. These papers deal with the one-dimensional situation and provide only minimax rates of testing
without attaining the exact separation constant. Furthermore, the alternative is defined as the set of functions
that are at least at a Euclidean distance $\rho$ from the null hypothesis, which is very different from the alternatives
considered in this work.

More recently, the nonasymptotic approach to the minimax testing gained popularity \citep{baraud2003adaptive,Baraudetal05,
laurent2011testing,laurent2012non}. One of the advantages of the nonasymptotic approach is that it removes
the frontier between the concepts of parametric and nonparametric  hypotheses, while its limitation is that
there is no result on sharp optimality (even the notion itself is not well defined). Note also that
all these papers deal with the GSM considering as main application the case of one dimensional signals,
as opposed to our set-up of regression with high-dimensional covariates.

Let us review in more details the papers \citep{ingster2009minimax} and \citep{laurent2011testing} that
are very closely related to our work either by the methodology which is used or by the problem of interest.
\cite{ingster2009minimax} extended some results on the goodness-of-fit testing for the $d$-dimensional GWNM to
the goodness-of-fit testing  for the multivariate nonparametric regression model. More precisely, they tested
the null hypothesis $H_0 : f=f_0$, where $f_0$ is a known function, against the alternative
$H_1 : f\in\Sigma, \int_\Delta (f-f_0)^2\geq r_n^2$, where $\Sigma$ is an ellipsoid in the Hilbert space $L_2(\Delta)$.
They obtained both rate and sharp
asymptotics for the error probabilities in the minimax setup. So the model they considered is the same as the one we
are interested in here, but the hypotheses $H_0$ and $H_1$ are substantially different. As a consequence, the testing
procedure we propose takes into account the general forms of $H_0$ and $H_1$ given by (\ref{eq:5}) and is different from
the asymptotically minimax test of \cite{ingster2009minimax}. Furthermore, we substantially relaxed the contraint on
the noise distribution by replacing Gaussianity assumption by the condition of bounded 4th moment.

\cite{laurent2011testing} considered the GWNM from the inverse problem point of view, \textit{i.e.}, when the signal of
interest $g$ undergoes a linear transformation $T$ before being observed in noisy
environment. This corresponds to  $f=T[g]$ with a compact injective operator $T$.
Then the two assertions  $g=0$ and  $T[g]=0$ are equivalent. Consequently, if the goal is to detect the signal $f$, one
can consider the two testing problems :
\begin{enumerate}
\item (inverse formulation) $H_0 : T^{-1}[f]=0$ against $H_1 : \|T^{-1}[f]\|_2\geq \rho$.
\item (direct formulation) $H_0 : f=0$ against $H_1 : \|f\|_2\geq \rho$.
\end{enumerate}
The authors discussed advantages and limitations of each of these two formulations in terms of minimax rates.
Depending on the complexity of the inverse problem and on the assumptions on the function to be detected
(sparsity or smoothness), they proved that the specific treatment devoted to inverse problem which includes an
underlying inversion of the operator, may worsen the detection accuracy. For each situation, they also highlighted
the cases where the direct strategy fails while a specific test for inverse formulation works well. The inverse formulation
is closely related to our definition (\ref{eq:5}) of the hypotheses $H_0$ and $H_1$, since
$Q[f]=\|T^{-1}[f]\|_2^2$ is a quadratic functional. However, our setting is more general in that we consider functionals
with non-trivial kernels and with possibly negative diagonal entries.

\subsection{Organization}

The rest of the paper is organized as follows. The results concerning sharp asymptotics for
positive semi-definite diagonal functionals are provided in Section \ref{sec:2}. In particular,
the rates of separation for a general class of tests called linear U-tests are explored in Subsection~\ref{ssec:2.2}.
The asymptotically optimal linear U-test is provided in Subsection~\ref{ssec:2.3} along with its rate of separation, which
is shown to coincide with the minimax exact rate in Subsection~\ref{ssec:2.4}. Section~\ref{sec:examples} is devoted to
a discussion of the assumptions and to the consequences of the main result for some relevant examples.
The results for nonpositive and nonnegative diagonal quadratic functionals are stated in Section~\ref{sec:NPF} along with
an application to testing the equality of the norms of two signals. A summary and some perspectives are provided in Section~\ref{sec:conc}.
Finally, the proofs of the results are postponed to the Appendix.

\section{Minimax testing for nonnegative quadratic functionals}\label{sec:2}

\subsection{Additional notation}
In what follows, the notation $A_n=O(B_n)$ means that there exists a constant $c>0$ such that $A_n\leq c B_n$ and the notation
$A_n=o(B_n)$ means that the ratio $A_n/B_n$ tends to zero.  The relation $A_n\sim B_n$ means that $A_n/B_n$ tends to 1, while
the relation $A_n\asymp B_n$ means that there exist constants $0<c_1<c_2<\infty$ and $n_0$ large enough such that
$c_1\leq A_n/B_n\leq c_2$ for $n\geq n_0$. For a real number $c$,  we denote by $c_+$ its positive part
$\max (0,c)$ and by $\lfloor c \rfloor$ its integer part. For a set $\mcA$, $\1_A$ stands for its indicator function and
$|\mcA|$ denotes its cardinality. Given a $q>0$ and a function $f$, $\|f\|_q=\big(\int_{\Delta} |f(t)|^qdt\big)^{1/q}$ is
the conventional $\ell_q$-norm of $f$.  Similarly, for a vector or an array $\bu$ indexed by a countable set $\mcL$,
$\|\bu\|_q=(\sum_{l\in\mcL} |u_l|^q)^{1/q}$ is the $\ell_q$-norm of $\bu$. As usual,
we also denote by $\|\bu\|_0$ and $\|\bu\|_\infty$, respectively, the number of nonzero entries and the magnitude of the largest
entry of $\bu\in\RR^{\mcL}$.

In the sequel, without loss of generality, we assume that the standard deviation of the noise is equal to one: $\tau=1$.
The case of general but known $\tau$ can be deduced as a consequence of our results.

Recall that we consider quadratic functionals $Q$ of the form $Q[f]=\sum_{l\in\mathcal{L}} q_l\theta_l[f]^2$, for some given
array $\bq=\{q_l\}_{l\in\mathcal{L}}$. The major difference between the functional $\sum_{l\in\mcL} \theta_l[f]^2$ that appears
in the problem of detection \citep{ingster2009minimax,Ingster_12} and this general functional actually lies in the fact that
the support of $\bq$ defined by $S_F=\text{supp}(\bq)=\big\{l\in\mcL : q_l\neq 0\big\}$ is generally different from
$\mathcal{L}$. Furthermore, large coefficients $q_l$ amplify the error of estimating $Q[f]$ and, therefore, it becomes more difficult
to distinguish $H_0$ from $H_1$. An interesting question, to which we answer in the next sections, is what is the interplay
between $\bc$ and $\bq$ that makes it possible to distinguish between the null and the alternative.

Let  $S_F^c$ denote the complement of $S_F$ and, for a set $L\subset \mathcal{L}$,  span$\big(\{\varphi_l\}_{l\in L}\big)$ be
the closed linear subspace of $L_2(\Delta)$ spanned by the set $\{\varphi_l\}_{l\in L}$.  Let $\Pi_{S_F}f$ and $\Pi_{{S_F^c}}f$
be the orthogonal projections of a function $f\in \Sigma$ on $\text{span}\big(\{\varphi_l\}_{l\in {S_F}}\big)$ and
$\text{span}\big(\{\varphi_l\}_{l\in {S_F^c}}\big)$ respectively. To simplify notation, the subscript ${S_F^c}$ is
omitted in the rest of the paper, \textit{i.e.}, $\Pi_{{S_F^c}}f$ is replaced by $\pf$.
Finally, throughout this work we will assume that  $f$ is centered, \textit{i.e.}, $\int_\Delta f(\bt)\,d\bt=0$, and
that $\{\varphi_l\}$ is an orthonormal basis of the subspace of $L_2(\Delta)$ consisting of all centered functions. In other terms,
all the functions $\varphi_i$ are orthogonal to the constant function.

\subsection{Linear U-tests and their error rate}\label{ssec:2.2}

We start by introducing a family of testing procedures that we call linear U-tests. To this end, we split the sample into two parts:
a small part of the sample is used to build a pilot estimator $\proj$ of $\pf$, whereas the remaining observations are used for
distinguishing between $H_0$ and $H_1$. Let us set $m=n-\lfloor \sqrt{n}\rfloor$ and call the two parts of the sample
$\mcD_1=\{(x_i,\bt_i):i=1,\ldots,m\}$ and $\mcD_2=\{(x_i,\bt_i):i=m+1,\ldots,n\}$.
Using a pilot estimator $\proj$ of $\pf$, we define the adjusted observations $\tilde{x}_i=x_i-\proj(\bt_i)$ and $\tilde{\bz}_i=(\tilde{x}_i, \bt_i)$.

\begin{definition}
Let $\bw_n=\{w_{l,n}\}_{l\in S_F}$ be an array of real numbers containing a finite number of nonzero entries and such that $\|\bw_n\|_2=1$.
Let $u$ be a real number. We call a linear U-test based on the array $\bw_n$
the procedure $\phi^\bw_n=\1_{\{U^\bw_n>u\}}$, where $U_n$ is the linear in $\bw_n$ U-statistic defined by
\begin{align}\label{U-stat}
U_n=\bigg(\frac{2}{m(m-1)}\bigg)^{1/2}\sum_{1\leq i<j \leq m} \tilde{x}_i\tilde{x}_j \sum_{l\in S_F} w_{l,n}\varphi_l(\bt_i)\varphi_l(\bt_j).
\end{align}
\end{definition}

We shall prove that an appropriate choice of $\bw_n$ and $u$ leads to a linear U-test that is asymptotically sharp-optimal.
The rationale behind this property relies on the by now well-understood principle of smoothing out high frequencies of a noisy
signal. In fact, if we call $\{\theta_l[f]\}_{l\in S_F}$ the (relevant part of the) representation of $f$ in the frequency
domain, then $\{\frac1m\sum_{i=1}^m \tilde x_i\varphi_l(\bt_i)\}_{l\in S_F}$ is a nearly unbiased estimator of this representation.
Then, the array $\bw_n$ acts as a low pass filter that shrinks to zero the coefficients corresponding to high frequencies in order
to prevent over-fitting.

The first step in establishing theoretical guarantees on the error rate of a linear U-test consists in exploring the behavior of
the statistic $U_n$ under the null.

\begin{proposition}\label{prop_1}
Let $w_{n,l}\ge 0$ for all $n\in\NN$ and $l\in\mcL$.
Assume that $E[\xi_1^4]<\infty$ and the following conditions are fulfilled:
\begin{itemize}
\item For some $C_w<\infty$, $\|\bw_n\|^2_\infty \|\bw_n\|_0\le C_w$.
\item As $n\to\infty$, $\|\bw_n\|_0 \to\infty$ so that $ \|\bw_n\|_0 = o(n)$.
\item For some $C_\varphi<\infty$,
$\sup_{\bt\in\Delta}\sum_{l:w_{l,n}\not=0} \varphi_l^2(\bt)\le C_\varphi \|\bw_n\|_0$.
\item As $n\to\infty$, $\sup_{f\in \Sigma} E_f[\|\pf-\proj\|_4^4]=o(1)$.
\end{itemize}
Then, uniformly in $f\in\mcF_0$, the U-statistic defined by (\ref{U-stat}) converges in distribution to the standard
Gaussian distribution $\mcN(0,1)$.
\end{proposition}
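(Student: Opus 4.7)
The plan is to work conditionally on the pilot sample $\mcD_2$, treating $g:=\proj$ as a deterministic element of $\text{span}\{\varphi_l:l\in S_F^c\}$ (as it is the projection estimator of $\pf$). Under $H_0$ combined with the assumption $q_l\ge 0$ for all $l\in\mcL$, the constraint $Q[f]=0$ forces $\theta_l[f]=0$ for every $l\in S_F$; hence $f=\pf$. Writing $\bar f=\pf-g$ and $\eta_i=\bar f(\bt_i)+\xi_i$, one gets $\tilde x_i=\eta_i$, so that
\[
U_n=\Bigl(\tfrac{2}{m(m-1)}\Bigr)^{1/2}\!\!\sum_{1\le i<j\le m}\eta_i\eta_j\,K_n(\bt_i,\bt_j),\qquad K_n(\bt,\bt')=\sum_{l\in S_F}w_{l,n}\varphi_l(\bt)\varphi_l(\bt'),
\]
is an order-two U-statistic in i.i.d.\ observations. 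The pivotal point is that this kernel is \emph{canonical}: because $\bar f\perp\varphi_l$ for every $l\in S_F$, the conditional mean $E[\eta_1K_n(\bt_1,\bt_2)\mid\bz_2]=\eta_2\sum_{l\in S_F}w_{l,n}\varphi_l(\bt_2)\langle\bar f,\varphi_l\rangle$ vanishes, so the first-order Hoeffding projection is zero and $U_n$ is completely degenerate.

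\textbf{Martingale CLT.} For such a degenerate U-statistic I would apply the Hall--Heyde martingale CLT to the decomposition $U_n=\sum_{j=2}^m M_j$ where
\[
M_j=\Bigl(\tfrac{2}{m(m-1)}\Bigr)^{1/2}\eta_j\sum_{i<j}\eta_i\,K_n(\bt_i,\bt_j),
\]
which is a martingale difference sequence for the filtration $\mcF_j=\sigma(\bz_1,\dots,\bz_j)$. Two conditions must be verified: (i) $\sum_j E[M_j^2\mid\mcF_{j-1}]\to 1$ in probability, and (ii) the Lyapunov bound $\sum_j E[M_j^4]\to 0$. For (i), integrating $\bz_j$ out and using orthonormality of $\{\varphi_l\}$ together with $\|\bw_n\|_2=1$ yields the leading contribution $\frac{2}{m(m-1)}\sum_{i<j}\eta_i^2$, which sums over $j$ to $\frac{1}{m}\sum_i\eta_i^2\to E[\eta_1^2]=1+\|\bar f\|_2^2=1+o_\PP(1)$; the non-diagonal remainders are absorbed using $\sup_\bt\sum_{l:w_{l,n}\ne 0}\varphi_l^2(\bt)\le C_\varphi\|\bw_n\|_0$ together with the $L_4$ control of $\bar f$ provided by the hypothesis $\sup_{f\in\Sigma}E_f[\|\pf-\proj\|_4^4]=o(1)$. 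For (ii), expanding $E[M_j^4]$ and using $E[\xi_1^4]<\infty$, the bound $\|\bw_n\|_\infty^2\|\bw_n\|_0\le C_w$, and the trace bound $\sup_\bt\sum_l\varphi_l^2(\bt)\le C_\varphi\|\bw_n\|_0$, one obtains $E[M_j^4]\le c_n^4 (j-1)\,\|\bw_n\|_0\cdot\mathrm{const}$, so that $\sum_j E[M_j^4]=O(\|\bw_n\|_0/m)\to 0$ by the assumption $\|\bw_n\|_0=o(n)$.

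\textbf{Unconditioning and main obstacle.} Conditional convergence to $\mcN(0,1)$ holds for $\PP$-a.e.\ realisation of $\mcD_2$, and the uniform hypothesis $\sup_{f\in\Sigma}E_f[\|\pf-\proj\|_4^4]=o(1)$ promotes this into unconditional convergence, uniformly in $f\in\mcF_0$. I expect the main technical obstacle to be step~(ii): with only a finite fourth moment of $\xi_1$---no Gaussianity---and with the kernel mixing many basis functions through $\bw_n$, every one of the several cross-terms produced by expanding $E[M_j^4]=E[\eta_j^4(\sum_{i<j}\eta_iK_n(\bt_i,\bt_j))^4]$ must be bounded separately; the three structural hypotheses on $\bw_n$ and $\{\varphi_l\}$ are precisely what is needed to keep every pair $(l,l')\in S_F\times S_F$ from contributing a dominant term. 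A secondary difficulty is handling the non-diagonal error $\int\bar f^{\,2}\varphi_l\varphi_{l'}$ that appears in the second moment, which requires combining the $L_4$ bound on $\bar f$ with Cauchy--Schwarz and the sup bound on $\sum_l\varphi_l^2$.
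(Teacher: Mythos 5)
Your argument is sound and would yield the proposition, but it is organized quite differently from the paper's proof, so a comparison is in order. The paper expands $\tilde x_i\tilde x_j=(\xi_i+\bar f(\bt_i))(\xi_j+\bar f(\bt_j))$ with $\bar f=\pf-\proj$ and splits $U_n=U_{n,0}+U_{n,1}+U_{n,2}$ into a pure-noise part, a cross term and a pure-bias term; Lemma~\ref{lem:AD1} bounds the variances of $U_{n,1}$ and $U_{n,2}$ by quantities proportional to $\|\Pi_{S_F}f\|_2^2+E_f\|\pf-\proj\|_2^2$ and its fourth-power analogue, which vanish under $H_0$ (where $\Pi_{S_F}f=0$) by the fourth hypothesis, so these two pieces are $o_P(1)$; the CLT is then invoked as a black box (Hall, 1984) for the i.i.d.-noise kernel $\xi_i\xi_jG_n(\bt_i,\bt_j)$ alone, where Hall's condition reduces to $\|\bw_n\|_\infty\to0$ and $\|\bw_n\|_\infty^2\big(\sup_\bt\sum_{l}\varphi_l^2(\bt)\big)^2=o(n)$, both immediate from your first three hypotheses; Slutsky concludes. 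You instead keep $U_n$ intact as a single U-statistic in $\eta_i=\bar f(\bt_i)+\xi_i$, observe (correctly) that conditionally on $\mcD_2$ it is completely degenerate because $\bar f\perp\varphi_l$ for $l\in S_F$, and rerun the Hall--Heyde martingale CLT on the whole object. Since Hall's theorem is itself proved by the martingale CLT, the two routes are cousins; yours avoids the decomposition and the Slutsky step, but at the price that $\bar f$ contaminates every term of the conditional-variance and Lyapunov verifications --- precisely the bookkeeping you flag as the main obstacle --- whereas the paper quarantines all $\bar f$-dependence into Lemma~\ref{lem:AD1}, whose bounds are then reused verbatim under the alternative in Proposition~\ref{prop_2}. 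Two minor imprecisions to fix if you write this out: the leading term of $\sum_jE[M_j^2\mid\mcF_{j-1}]$ is $\frac{2}{m(m-1)}\sum_{i<j}\eta_i^2\sum_lw_{l,n}^2\varphi_l^2(\bt_i)$, a weighted rather than plain average of the $\eta_i^2$ (its convergence to $1+\|\bar f\|_2^2$ uses the boundedness $\sup_\bt\sum_lw_{l,n}^2\varphi_l^2(\bt)\le C_wC_\varphi$); and the conditional CLT holds only in $P_f$-probability over $\mcD_2$ (uniformly in $f$, thanks to the uniform $L_4$ hypothesis), not for almost every realization of the pilot sample.
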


In other terms, this proposition claims that under appropriate conditions, for every $u\in\RR$, the sequence
$\sup_{f\in\mcF_0}|P_f(U_n>u)-\Phi(u)|$ tends to zero, as $n$ goes to infinity. This means that under the null, the distribution of the
test statistic $U_n$ is asymptotically parameter free. This is frequently referred to as Wilks' phenomenon.

To complete the investigation of the error rate of a linear U-test, we need to characterize the behavior of the
test statistic $U_n$ under the alternative. As usual, this step is more involved. Roughly speaking, we will show
that under the alternative the test statistic $U_n$ is close to a Gaussian random variable with mean
$h_n[f,\bw_n] = \big(\frac{m(m-1)}{2}\big)^{1/2}\sum_{l\in \mcL(\bw_n)}\nolimits w_{l,n} \theta_l^2[f]$ and variance 1.
The rigorous statement is provided in the next proposition.

\begin{proposition}\label{prop_2}
Let the assumptions of Proposition~\ref{prop_1} be satisfied. Assume that in addition:
\begin{itemize}
\item There exists a sequence $\zeta_n$ such that $\zeta_n^{-1}=o(n)$ and $\sup_{l\in S_F:w_{l,n}<\zeta_n} c_l^{-1}=o(1)$.
\item For some $p>4$, we have $\sup_{f\in\Sigma} \|\Pi_{S_F}f\|_p<\infty$.
\end{itemize}
Then, for every $\rho>0$, the type II error of the linear U-test based on $\bw_n$ satisfies:
\begin{align}
\sup_{f\in\mcF_1(\rho)}\nolimits P_f(\phi_n^\bw=0)\le \sup_{f\in\mcF_1(\rho)}\nolimits \Phi(u-h_n[f,\bw_n]) + o(1),
\end{align}
where the term $o(1)$ does not depend on $\rho$.
\end{proposition}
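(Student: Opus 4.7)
The plan is to condition on the auxiliary subsample $\mcD_2$, which determines the pilot $\proj$, and decompose the centered response as $\tilde x_i = \Pi_{S_F}f(\bt_i)+\epsilon_i$ with $\epsilon_i=\xi_i+\pf(\bt_i)-\proj(\bt_i)$. Substituting into (\ref{U-stat}) produces a three-term split $U_n=A_n+B_n+C_n$, in which $A_n$ retains the signal-signal product $\Pi_{S_F} f(\bt_i)\Pi_{S_F} f(\bt_j)$, $B_n$ collects the cross terms $\Pi_{S_F} f(\bt_i)\epsilon_j+\epsilon_i\Pi_{S_F} f(\bt_j)$, and $C_n$ gathers the pure-noise product $\epsilon_i\epsilon_j$. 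By orthonormality of $\{\varphi_l\}$ and the identity $\theta_l[f]=\theta_l[\Pi_{S_F} f]$ for $l\in S_F$, a direct computation yields $E_f[A_n]=h_n[f,\bw_n]$, so the reduction is to prove that $A_n-h_n[f,\bw_n]$ and $B_n$ tend to zero in probability, while $C_n$ converges in distribution to $\mcN(0,1)$, all uniformly in $f\in\mcF_1(\rho)$.

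The noise-noise term requires no extra work: conditionally on $\mcD_2$, $C_n$ is precisely the statistic treated in Proposition~\ref{prop_1} with residuals $\epsilon_i$ in place of $\tilde x_i$. Since its distribution depends on $f$ only through $\pf$, and the ellipsoid constraint on $\pf$ is identical under both hypotheses, the uniform convergence granted by Proposition~\ref{prop_1} extends verbatim from $\mcF_0$ to all of $\Sigma$.

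The heart of the argument is the variance control of $A_n$ and $B_n$. Writing $A_n=\sqrt{m(m-1)/2}\,U$ with $U$ a standard rank-two U-statistic of kernel $\Pi_{S_F} f(\bt_1)\Pi_{S_F} f(\bt_2)\sum_l w_{l,n}\varphi_l(\bt_1)\varphi_l(\bt_2)$, Hoeffding's decomposition gives $\mathrm{Var}(A_n)=2(m-2)\sigma_1^2(f)+\sigma_2^2(f)$, where $\sigma_1^2(f)\le E\bigl[(\Pi_{S_F} f(\bt))^2 G_f(\bt)^2\bigr]$ and $G_f(\bt)=\sum_l w_{l,n}\theta_l[f]\varphi_l(\bt)$. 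A H\"older estimate exploiting $\sup_{f\in\Sigma}\|\Pi_{S_F} f\|_p<\infty$ for some $p>4$, combined with the local-norm bound $\sup_\bt\sum_{l:w_{l,n}\neq 0}\varphi_l^2(\bt)\le C_\varphi\|\bw_n\|_0$ and the budget $\|\bw_n\|_\infty^2\|\bw_n\|_0\le C_w$, controls $\sigma_1^2(f)$ by the weighted $\ell_2$-norm of $(w_{l,n}\theta_l[f])_l$. The condition $\sup_{l\in S_F:w_{l,n}<\zeta_n}c_l^{-1}=o(1)$ with $\zeta_n^{-1}=o(n)$ then absorbs the tail of $\theta_l[f]$ through the ellipsoid constraint and forces $m\sigma_1^2(f)=o(1)$ uniformly on $\Sigma$; the $\sigma_2^2$ term is dispatched by a cruder version of the same bound. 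For $B_n$, conditioning on $\mcD_2$ makes it centered, since $\int\pf\,\varphi_l=\int\proj\,\varphi_l=0$ for $l\in S_F$ when $\proj$ is chosen in $\mathrm{span}\{\varphi_l\}_{l\in S_F^c}$, and its conditional variance splits into a piece driven by $\|\pf-\proj\|_4^4$ (negligible by the fourth hypothesis of Proposition~\ref{prop_1}) and a pure-noise piece controlled by $E[\xi_1^4]<\infty$ together with the same weight-basis bounds.

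Combining the three estimates via Chebyshev and Slutsky yields $U_n-h_n[f,\bw_n]\xrightarrow{d}\mcN(0,1)$ uniformly in $f\in\mcF_1(\rho)$. Since $\Phi$ is continuous, P\'olya's theorem promotes this to uniform convergence of distribution functions, and writing $P_f(\phi_n^\bw=0)=P_f(U_n-h_n[f,\bw_n]\le u-h_n[f,\bw_n])$ and taking the supremum over $\mcF_1(\rho)$ delivers the claim with an $o(1)$ that is independent of $\rho$. The principal technical obstacle will be the bound $m\sigma_1^2(f)=o(1)$ for the linear Hoeffding component of $A_n$: it requires the simultaneous use of the $L_p$-smoothness of $\Pi_{S_F} f$, the design constraints on $\bw_n$, and the $\zeta_n$-condition interacting with the ellipsoid geometry, since none of these ingredients is individually sufficient.
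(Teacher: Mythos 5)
Your skeleton---split $\tilde x_i$ into a signal part and a residual, prove a CLT for the pure-noise U-statistic, kill the remaining terms by variance bounds, and finish with P\'olya's theorem---is the same as the paper's, and your treatment of the noise and pilot-error pieces is sound. But there is a genuine gap in the central claim that $A_n-h_n[f,\bw_n]$ and $B_n$ are negligible \emph{uniformly over} $\mcF_1(\rho)$ (equivalently, that $m\sigma_1^2(f)=o(1)$ uniformly on $\Sigma$). This is false. The linear Hoeffding component of the signal term has variance of order $h_n[f,\bw_n]$ itself (in the paper's Lemma~\ref{lem:AD1} this is the term $A_{n,2}\le \|\bw_n\|_\infty\, h_n[f,\bw_n]\,(\sup_\bt\sum_l\varphi_l^2(\bt))^{1/2}(\cdots)^{1/2}$), and likewise the cross term satisfies only $E[U_{n,1}^2]=o(1)\,(1+h_n[f,\bw_n])$ after the $\zeta_n$-condition is used, because $\|\Pi_{S_F}f\|_2^2\le \sqrt2\,h_n[f,\bw_n]/(\zeta_n(m-1))+o(1)$. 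Since $h_n[f,\bw_n]\asymp m\sum_l w_{l,n}\theta_l^2[f]$ is unbounded over the alternative (take $f=\theta_{l_0}\varphi_{l_0}$ on the boundary of the ellipsoid with $w_{l_0,n}=\|\bw_n\|_\infty$: then $h_n\asymp n\|\bw_n\|_0^{-1/2}\to\infty$ and $m\sigma_1^2\asymp n/\|\bw_n\|_0\to\infty$), no choice of $\zeta_n$, H\"older exponent, or design constraint rescues a uniform CLT for $U_n-h_n[f,\bw_n]$ over $\mcF_1(\rho)$.

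The missing idea is a two-regime split of the supremum, which is exactly how the paper proceeds: fix a large threshold $\bar h$ and treat $\{f: h_n[f,\bw_n]>\bar h\}$ and $\{f: h_n[f,\bw_n]\le\bar h\}$ separately. In the first regime no distributional approximation is needed---Chebyshev gives $P_f(U_n\le u)\le \mathrm{Var}_f[U_n]/(h_n[f,\bw_n]-u)^2\le C(1+\bar h)/(\bar h-u)^2\le C'\bar h^{-1}$, which is harmless because $\Phi(u-h_n[f,\bw_n])$ is itself essentially zero there. Only in the second regime, where $h_n[f,\bw_n]$ is bounded, do your variance bounds (the $\zeta_n$-condition for $\|\Pi_{S_F}f\|_2^2$ and the H\"older interpolation $\|\Pi_{S_F}f\|_4^4\le\|\Pi_{S_F}f\|_2^{2(p-4)/(p-2)}\|\Pi_{S_F}f\|_p^{2p/(p-2)}$) actually deliver $o(1)$, and the normal approximation for $U_{n,0}$ can be combined with them. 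Letting first $n\to\infty$ and then $\bar h\to\infty$ yields the claimed bound with an $o(1)$ independent of $\rho$. Without this split, the step ``combining the three estimates via Chebyshev and Slutsky'' does not go through.
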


Let us provide an informal discussion of the assumptions introduced in the previous propositions. The first two assumptions
in Proposition~\ref{prop_1} mean that most nonzero entries of the array $\bw_n$  should be of the same order. Arrays that
have a few spikes and many small entries are discarded by these assumptions. Furthermore, the number of samples in the frequency domain
that are not annihilated by $\bw_n$ should be small as compared to the sample size $n$. The third assumption of Proposition~\ref{prop_1}
is trivially satisfied for bases of bounded functions such as sine and cosine bases and their tensor products. For localized bases
like wavelets, this assumption imposes a constraint on the size of the support of $\bw_n$: it should not be too small. The last assumption
of Proposition~\ref{prop_1} will be discussed in more detail later. One should also take note that the only reason for requiring
from the functions $f$ to be smooth under the null is the need to be able to construct a uniformly consistent pilot estimator of $\pf$.

Concerning the assumptions imposed in Proposition~\ref{prop_2}, the first one means that only coefficients $\theta_l$ corresponding
to high frequencies are strongly shrunk by $\bw_n$. This is a kind of coherence assumption between the smoothing filter $\bw_n$ and the
coefficients $\bc=\{c_l\}_{l\in\mcL}$ encoding the prior information on the signal smoothness. The second assumption of Proposition~\ref{prop_2}
is rather weak and usual in the context of regression with random design. It is only needed for getting uniform control of the error rate and
the actual value of the norm $\|\Pi_{S_F}f\|_p$ does not enter in any manner in the definition of the testing procedure.

Let us draw now the consequences of the previous propositions on the cumulated error rate of a linear U-test. Using the monotonicity
of the Gaussian c.d.f.\ $\Phi$, under the assumptions of Proposition~\ref{prop_2}, we get
\begin{align}\label{eq:19}
\gamma_n(\mcF_0,\mcF_1(\rho),\phi_n^\bw)\le \Phi(-u)+ \Phi\Big(u-\inf_{f\in\mcF_1(\rho)}\nolimits h_n[f,\bw_n]\Big)+o(1),
\end{align}
where the term $o(1)$ is uniform in $\rho>0$. Using the symmetry of $\Phi$ and the monotonicity of $\Phi'$ on $\RR_+$, one easily checks that
the value of the threshold $u$ minimizing the main term in the right-hand side of the last display is
$u=\frac12\inf_{f\in\mcF_1(\rho)}h_n[f,\bw_n].$
This result provides a constructive tool for determining the rate of separation
of a given linear U-test. In fact, one only needs to set $u=z_{1-\gamma/2}$ and find a sequence $r_n$ such that
$\inf_{f\in\mcF_1(r_n)}h_n[f,\bw_n]\sim 2z_{1-\gamma/2}$, where $z_\alpha$ is the $\alpha$-quantile of $\mcN(0,1)$.

\begin{remark}\label{tildexi}
We explain here the use of $\tilde{x}_i$ instead of $x_i$ in our testing procedure. Actually if we were only interested in rate-optimality,
this precaution would not have been necessary. The problem only arises when dealing with sharp-optimality and it concerns the variance of $U_n$.
Indeed we need some terms that appear in the variance to tend to zero when $Q[f]=0$ or $Q[f]$ is small (those terms only need to be bounded for
the rate-optimality). If we had used $x_i$ instead of $\tilde{x}_i$, we would have ended up with terms like $\|f\|_2$ in the variance. The
information contained in the assertion ``$Q[f]$ is small'' concerns only the coefficients $\{\theta_l\}_{l\in {S_F}}$, thus it implies that
$\|\Pi_{S_F}f\|_2$ is small but it does not say anything about $\|f\|_2$. We can also remark that this problem does not arise
in the Gaussian sequence model as one estimates $\theta_l^2$ by an unbiased estimator whose variance  makes appear only $\theta_l$.
\end{remark}

\begin{remark}\label{criterion}
We chose to consider only the criterion $\gamma_n(\mcF_0,\mcF_1(\rho), \phi_n^\bw)$ so as to simplify the exposition of our results.
But we could have dealt with the classical Neyman-Pearson criterion that we recall here. For a significance level $0<\alpha<1$ and a test $\psi$,
we set
$$
\alpha(\mcF_0,{\psi})=\sup_{f\in\mcF_0}\nolimits P_f(\psi=1),\quad
\beta(\mcF_1, \psi)=\inf_{\psi}\nolimits\sup_{f\in\mcF_1}\nolimits P_f(\psi=0),
$$
Instead of the minimax risk $\gamma_n(\mcF_0,\mcF_1(\rho))$ we could have considered the quantity
$\beta_n(\mcF_0, \mcF_1(\rho))=\inf_{\psi : \alpha(\mcF_0,\psi)\leq \alpha}\beta(\mcF_1(\rho), \psi)$. This criterion is
considered in \cite{ingster2009minimax} and more generally in \cite{ingster2003nonparametric}. The transposition to  our
case is straightforward.
\end{remark}

\subsection{Minimax linear U-tests}\label{ssec:2.3}

The  relation  (\ref{eq:19}) being valid for a large variety of arrays $\bw_n$, it is natural to
look for a $\bw_n$ minimizing the right-hand side of (\ref{eq:19}). This leads to the following saddle point problem:
\begin{align}\label{saddle}
\sup_{\substack{\bw\in\RR^{\mcL}_+\\ \|\bw\|_2=1}}\inf_{f\in\mcF_1(\rho)} \sum_{l\in\mcL} w_l \theta_l[f]^2=
\sup_{\substack{\bw\in\RR^{\mcL}_+\\ \|\bw\|_2=1}}
\inf_{\substack{\bv\in\RR^{\mcL}_+\\ \langle \bv,\bc\rangle \le 1, \langle \bv,\bq\rangle \ge \rho^2 }}
\langle \bw ,\bv\rangle .
\end{align}
It turns out that this saddle point problem can be solved with respect to $\bw$ and leads to a one-parameter family of
smoothing filters $\bw$.

\begin{proposition}\label{prop_3}
Assume that for every $T>0$, the set $\NL(T)=\{l\in S_F: c_l<Tq_l\}$ is finite. For a given $\rho>0$, assume that the equation
\begin{equation}
\frac{\sum_{l\in\mcL} q_l(Tq_l-c_l)_+}{\sum_{l\in\mcL} c_l(Tq_l-c_l)_+}=\rho^2
\end{equation}
has a solution and denote it by $T_\rho$. Then, the pair $(\bw^*,\bv^*)$ defined by
\begin{align}\label{eq:22}
v_l^*=\frac{(T_\rho q_l-c_l)_+}{\sum_{l\in\mcL}c_l(T_\rho q_l-c_l)_+}\qquad w_l^*=\frac{v_l^*}{\|\bv^*\|_2}
\end{align}
provides a solution to the saddle point problem (\ref{saddle}), that is
\begin{align*}
\langle \bw^*,\bv^*\rangle=
\sup_{\substack{\bw\in\RR^{\mcL}_+\\ \|\bw\|_2=1}}\nolimits
\inf_{\substack{\bv\in\RR^{\mcL}_+\\ \langle \bv,\bc\rangle \le 1, \langle \bv,\bq\rangle \ge \rho^2 }}
\langle \bw ,\bv\rangle= \inf_{\substack{\bv\in\RR^{\mcL}_+\\ \langle \bv,\bc\rangle \le 1, \langle \bv,\bq\rangle \ge \rho^2 }}
\langle \bw^* ,\bv\rangle.
\end{align*}
\end{proposition}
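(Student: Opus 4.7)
The plan is to verify directly that $(\bw^*, \bv^*)$ is a saddle point of the bilinear map $(\bw, \bv) \mapsto \langle \bw, \bv\rangle$ over the two feasible sets. Once this is done, both equalities in the conclusion follow from weak duality ($\sup\inf \le \inf\sup$) together with the feasibility of $\bv^*$. As a preliminary, note that the normalization in the definition of $\bv^*$ yields $\langle \bc, \bv^*\rangle = Z/Z = 1$, while the defining equation for $T_\rho$ gives $\langle \bq, \bv^*\rangle = \rho^2$, so $\bv^*$ saturates both constraints and is therefore feasible. The finiteness of $\NL(T_\rho)$ ensures that the support of $\bv^*$ is finite, so $\|\bv^*\|_2<\infty$ and $\bw^*$ is well-defined with $\|\bw^*\|_2=1$.

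For fixed $\bv^*$, since $\bv^*\ge 0$, the Cauchy--Schwarz inequality immediately identifies $\bw^*=\bv^*/\|\bv^*\|_2$ as the maximizer of $\bw\mapsto\langle \bw,\bv^*\rangle$ over $\{\bw\in\RR^\mcL_+:\|\bw\|_2=1\}$, with maximum value $\|\bv^*\|_2$. This is the easy direction. The main step is to show that $\bv^*$ minimizes $\bv\mapsto\langle \bw^*,\bv\rangle$ over the feasible set for $\bv$. The crucial trick is the elementary pointwise inequality $(T_\rho q_l - c_l)_+\ge T_\rho q_l - c_l$, which, combined with $\bv\ge 0$, $T_\rho>0$, and the two linear constraints on $\bv$, gives for any feasible $\bv$
$$\sum_{l\in\mcL}(T_\rho q_l - c_l)_+\,v_l \;\ge\; T_\rho\langle \bq,\bv\rangle - \langle \bc,\bv\rangle \;\ge\; T_\rho\rho^2 - 1.$$
At $\bv=\bv^*$ the supports of $v_l^*$ and $(T_\rho q_l - c_l)_+$ coincide, so the first inequality becomes equality and produces exactly $T_\rho\rho^2 - 1$. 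Dividing through by the positive constant $Z\|\bv^*\|_2$, where $Z=\sum_l c_l(T_\rho q_l - c_l)_+$, translates the bound into $\langle \bw^*,\bv\rangle \ge \langle \bw^*,\bv^*\rangle$ for every feasible $\bv$.

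The main obstacle is not really conceptual but lies in spotting this positive-part relaxation trick; once it is in hand, the rest is bookkeeping. Combining the two extremality properties with weak duality yields the chain of equalities claimed. A minor technical point is to verify $T_\rho>0$ and $Z>0$, which both follow from the assumption that the equation defining $T_\rho$ admits a solution together with $\bq\ge 0$ and $S_F$ nonempty, so the right-hand side of the defining equation is nontrivial.
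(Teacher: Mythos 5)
Your proof is correct, and it takes a genuinely different route from the paper's. The paper first invokes Kneser's minimax theorem for bilinear forms to interchange the sup and the inf, observes that $\sup_{\|\bw\|_2=1}\langle\bw,\bv\rangle=\|\bv\|_2$, and thereby reduces the problem to minimizing the convex function $\|\bv\|_2^2$ under linear constraints, which it solves by exhibiting explicit KKT multipliers ($\lambda=2/Z$, $\mu=2T_\rho/Z$, $\nu_l=2(c_l-T_\rho q_l)_+/Z$ in your notation). You instead verify the saddle-point property directly: Cauchy--Schwarz handles the maximization in $\bw$, and your positive-part relaxation $\sum_l (T_\rho q_l-c_l)_+v_l\ge T_\rho\langle\bq,\bv\rangle-\langle\bc,\bv\rangle\ge T_\rho\rho^2-1$, with equality at $\bv^*$ by support matching and constraint saturation, handles the minimization in $\bv$; the two claimed equalities then follow from weak duality without any appeal to a minimax theorem. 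Your relaxation inequality is of course the Lagrangian/complementary-slackness computation of the paper in disguise (the coefficients $T_\rho$ and $1$ multiplying the constraints are exactly $\mu/\lambda$ and $1$), but your version is self-contained and more elementary: it avoids both the citation of Kneser's theorem (whose hypotheses one would otherwise need to check on these unbounded feasible sets) and the formal KKT sufficiency argument, at the mild cost of having to spot the right linear combination of constraints by hand. The technical points you flag ($T_\rho>0$, $Z>0$, finiteness of the support of $\bv^*$) are handled adequately.
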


This result tells us that the ``optimal'' weights $\bw_n$ for the linear U-test $\phi_n^\bw$ should be of the
form (\ref{eq:22}), which is particularly
interesting because of its dependence on only one parameter $T>0$. The next theorem provides a simple strategy for
determining the minimax sharp-optimal test among linear U-tests satisfying some mild assumptions. We will show later
in this section that this test is also minimax sharp-optimal among all possible tests.

\begin{theorem}\label{thm_1}
Assume that $E[\xi_1^4]<\infty$ and for every $T>0$, the set $\NL(T)=\{l\in S_F: c_l<Tq_l\}$ is finite. For a prescribed
significance level $\gamma\in(0,1)$, let $T_{n,\gamma}$ be a sequence of positive numbers such that the following relation
holds true: as $n\to\infty$,
\begin{equation} \label{eq:24}
\bigg(\frac{m(m-1)}{2}\sum_{l\in\mcL} (T_{n,\gamma}q_l-c_l)_+^2\bigg)^{1/2} = \bigg(\sum_{l\in\mcL} c_l(T_{n,\gamma}q_l-c_l)_+\bigg)(2z_{1-\gamma/2}+o(1)).
\end{equation}
Let us define
\begin{equation}\label{eq:25}
r_{n,\gamma}^*=\bigg\{\frac{\sum_{l\in\mcL} q_l(T_{n,\gamma}q_l-c_l)_+}{\sum_{l\in\mcL} c_l(T_{n,\gamma}q_l-c_l)_+}\bigg\}^{1/2}.
\end{equation}
If the following conditions are fulfilled:
\begin{itemize}
\item[]{\bf {[C1]}} For some constant $C_1>0$,
$ |\mcN(T_{n,\gamma})|\max_{l\in\mcN(T_{n,\gamma})}q_l^2\le C_1 \sum_{l\in\mcN(T_{n,\gamma})} \big(q_l-\frac{c_l}{T_{n,\gamma}}\big)^2$.\\[-8pt]
\item[]{\bf {[C2]}} As $n\to\infty$, $\sum_{l\in\mcN(T_{n,\gamma})} q_l^2= o(n^2\min_{l\in\mcN(T_{n,\gamma})}q_l^2)$.\\[-8pt]
\item[]{\bf {[C3]}} For some constant $C_3>0$, $\sup_{\bt\in\Delta}\sum_{l\in\mcN(T_{n,\gamma})}\varphi_l^2(\bt)\le C_3 |\mcN(T_{n,\gamma})|$.\\[-8pt]
\item[]{\bf {[C4]}} As $n\to\infty$, $|\mcN(T_{n,\gamma})| \to\infty$ so that $|\mcN(T_{n,\gamma})|=o(n)$.\\[-8pt]
\item[]{\bf {[C5]}} As $n\to\infty$, $T_{n,\gamma}\inf_{l\in S_F} q_l$ tends to $+\infty$.\\[-8pt]
\item[]{\bf {[C6]}} As $n\to\infty$, $\sup_{f\in\Sigma} E_f[\|\pf-\proj\|_4^4]=o(1)$.\\[-8pt]
\item[]{\bf {[C7]}} For some $p>4$, it holds that $\sup_{f\in\Sigma}\|\Pi_{S_F}f\|_p<\infty$.
\end{itemize}
then the linear U-test $\widehat\phi_n^*=\1_{\{U_n^{\widehat\bw^*}>z_{1-\gamma/2}\}}$ based on the array $\widehat\bw_n^*$ defined by
$$
\widehat w_{l,n}^*=\frac{(T_{n,\gamma}q_l-c_l)_+}{\big[\sum_{l'\in\mcL}(T_{n,\gamma}q_{l'}-c_{l'})_+^2\big]^{1/2}}
$$
satisfies
\begin{equation}\label{eq:26}
\gamma_n(\mcF_0,\mcF_1(r_{n,\gamma}^*),{\widehat\phi}_n^*) \le \gamma+o(1),\qquad\text{as}\qquad n\to\infty.
\end{equation}
\end{theorem}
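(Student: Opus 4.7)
The plan is to combine Propositions~\ref{prop_1}, \ref{prop_2} and~\ref{prop_3} applied to the specific filter $\widehat\bw_n^*$ with threshold $u=z_{1-\gamma/2}$. Once the hypotheses of Propositions~\ref{prop_1} and~\ref{prop_2} are verified for this filter, Wilks' phenomenon supplies a type I error $\gamma/2+o(1)$, while Proposition~\ref{prop_3} together with the calibration equation (\ref{eq:24}) yields $\inf_{f\in\mcF_1(r_{n,\gamma}^*)}h_n[f,\widehat\bw_n^*]=2z_{1-\gamma/2}+o(1)$, and Proposition~\ref{prop_2} then bounds the type II error by $\Phi(-z_{1-\gamma/2})+o(1)=\gamma/2+o(1)$. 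Summing gives (\ref{eq:26}).

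I first check the hypotheses of Proposition~\ref{prop_1} for $\bw_n=\widehat\bw_n^*$. Since $\|\widehat\bw_n^*\|_0=|\mcN(T_{n,\gamma})|$, conditions [C3], [C4] and [C6] are respectively the third, second and fourth hypotheses verbatim. The first follows from [C1]: using $c_l\ge 0$, hence $(T_{n,\gamma}q_l-c_l)_+^2\le T_{n,\gamma}^2q_l^2$,
\begin{equation*}
\|\widehat\bw_n^*\|_\infty^2\|\widehat\bw_n^*\|_0
=\frac{|\mcN(T_{n,\gamma})|\max_l(T_{n,\gamma}q_l-c_l)_+^2}{\sum_l(T_{n,\gamma}q_l-c_l)_+^2}
\le\frac{|\mcN(T_{n,\gamma})|\max_{l\in\mcN(T_{n,\gamma})}q_l^2}{\sum_{l\in\mcN(T_{n,\gamma})}(q_l-c_l/T_{n,\gamma})^2}\le C_1.
\end{equation*}
Proposition~\ref{prop_1} then delivers $\sup_{f\in\mcF_0}P_f(U_n^{\widehat\bw^*}>z_{1-\gamma/2})\to 1-\Phi(z_{1-\gamma/2})=\gamma/2$.

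Next I check the additional hypotheses of Proposition~\ref{prop_2}. The second is precisely [C7]. For the first, I would take $\zeta_n=\tfrac12\bigl(\min_{l\in\mcN(T_{n,\gamma})}q_l\bigr)\big/\bigl(\sum_{l\in\mcN(T_{n,\gamma})}q_l^2\bigr)^{1/2}$; then $\zeta_n^{-1}=o(n)$ by [C2]. Given $l\in S_F$ with $\widehat w_{l,n}^*<\zeta_n$, there are two cases. Either $l\notin\mcN(T_{n,\gamma})$, in which case $c_l\ge T_{n,\gamma}q_l\ge T_{n,\gamma}\inf_{l\in S_F}q_l\to\infty$ by [C5]; or $l\in\mcN(T_{n,\gamma})$, in which case the chain
\begin{equation*}
T_{n,\gamma}q_l-c_l<\zeta_n\|(T_{n,\gamma}\bq-\bc)_+\|_2\le\zeta_nT_{n,\gamma}\Bigl(\sum\nolimits_{l'\in\mcN(T_{n,\gamma})}q_{l'}^2\Bigr)^{1/2}=\tfrac12T_{n,\gamma}\min_{l'\in\mcN(T_{n,\gamma})}q_{l'}\le\tfrac12T_{n,\gamma}q_l
\end{equation*}
forces $c_l>\tfrac12T_{n,\gamma}q_l\to\infty$, again by [C5]. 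Hence $\sup_{l\in S_F:\widehat w_{l,n}^*<\zeta_n}c_l^{-1}=o(1)$.

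Finally, Proposition~\ref{prop_3} with $T=T_{n,\gamma}$ (which, by the definition (\ref{eq:25}), coincides with $T_\rho$ at $\rho=r_{n,\gamma}^*$) yields the saddle-point identity
\begin{equation*}
\inf_{f\in\mcF_1(r_{n,\gamma}^*)}\sum_{l\in\mcL}\widehat w_{l,n}^*\theta_l[f]^2=\langle\widehat\bw_n^*,\bv^*\rangle=\|\bv^*\|_2=\frac{\bigl(\sum_l(T_{n,\gamma}q_l-c_l)_+^2\bigr)^{1/2}}{\sum_l c_l(T_{n,\gamma}q_l-c_l)_+}.
\end{equation*}
Multiplying by $\sqrt{m(m-1)/2}$ and using (\ref{eq:24}) identifies the left-hand side with $2z_{1-\gamma/2}+o(1)$, which combined with Propositions~\ref{prop_1} and~\ref{prop_2} proves (\ref{eq:26}). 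The most delicate step in the plan is the construction of $\zeta_n$: both [C2] (to guarantee $\zeta_n^{-1}=o(n)$) and [C5] (to guarantee $c_l\to\infty$) are genuinely needed, and the inner-$\mcN(T_{n,\gamma})$ case rests on the precise interplay between them; the remaining reductions are mostly bookkeeping.
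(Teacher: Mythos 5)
Your proposal is correct and follows essentially the same route as the paper: verify the hypotheses of Propositions~\ref{prop_1} and \ref{prop_2} for $\widehat\bw_n^*$ (using [C1]--[C7]), then use Proposition~\ref{prop_3} together with the calibration (\ref{eq:24}) to identify $\inf_{f\in\mcF_1(r_{n,\gamma}^*)}h_n[f,\widehat\bw_n^*]$ with $2z_{1-\gamma/2}+o(1)$. Your explicit two-case verification of the $\zeta_n$ condition (with the same choice $\zeta_n^2=\min_{l}q_l^2/4\sum_{l}q_l^2$) fills in a step the paper only asserts, and it is sound.
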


The proof of this result, provided in the Appendix, is a direct consequence of Proposition~\ref{prop_1}, \ref{prop_2} and \ref{prop_3}.
As we shall see below, the rate $r_{n,\gamma}^*$ defined in Theorem~\ref{thm_1} is the minimax sharp-rate in the problem of testing
hypotheses (\ref{eq:5}), provided that the assumptions of the theorem are fulfilled. As expected, getting such a strong
result requires non-trivial assumptions on the nature of the functional class, that of the hypotheses to be tested, as well as the
interplay between them. Some short comments on these assumptions are provided in the remark below, with a further development
left to subsequent sections.

\begin{remark}
The very first assumption is that the set $\mcN(T)$ is finite. It is necessary for ensuring that the linear U-test we introduced is computable.
This assumption is fulfilled when, roughly speaking, the coefficients which express the regularity, $\{c_l\}_{l\in\mathcal{L}}$, grow at a
faster rate than the coefficients $\{q_l\}_{l\in\mathcal{L}}$ of the quadratic functional $Q$.
Assumptions {\bf [C1]}, {\bf [C2]}, {\bf [C4]} and {\bf [C5]} are satisfied in most cases we are interested in. Two illustrative
examples---concerning Sobolev ellipsoids with quadratic functionals related to partial derivatives---for which these hypotheses are
satisfied are presented in Subsections~\ref{ssec:3.2} and \ref{ssec:3.3}. Assumption {\bf [C3]} is essentially a constraint on the
basis $\{\varphi_l\}$; we show in Subsection \ref{ssec:3.1} that it is satisfied by many bases commonly used in statistical literature.
{\bf [C6]} and  {\bf [C7]} are related to additional technicalities brought by the regression model, which force us to impose more
regularity than in the Gaussian sequence model.
\end{remark}

\begin{remark}
The result stated in Theorem~\ref{thm_1} is in the spirit of the previous work on the sharp asymptotics in minimax testing,
initiated by \citet{Ermakov90} in the problem of detection (\textit{i.e.}, $Q[f]=\|f\|_2^2$) under Gaussian white noise.
The explicit form\footnote{The first use of this type of weights for statistical purposes goes back to \citet{Pinsker80},
who showed that these weights lead to asymptotically minimax nonparametric estimators of the signal observed in Gaussian white noise.}
of the weights $\widehat w_{l,n}^*$ is obtained by solving a quadratic optimization problem called the extremal
problem in a series of recent works \citep{ingster2003nonparametric,ingster2009minimax,ingster2011estimation,Ingster_12}, see
also \cite{Ermakov04} for a similar result in the heteroscedastic GWNM. In the case $q_l=1$, $\forall l\in\mcL$, the
aforementioned extremal problem is equivalent to the saddle point problem (\ref{saddle}). In a nutshell, the main
differences of Theorem~\ref{thm_1} as compared to the existing results is the extension to the case of general coefficients $q_l$
and to non-Gaussian error distribution, as well as the use in the test statistic $U_n^\bw$ of the adjusted responses $\{\tilde x_i\}$
instead of the raw data $\{x_i\}$.
\end{remark}

\subsection{Lower bound}\label{ssec:2.4}

We shall state in this section the result showing that the rate $r_{n,\gamma}^*$ introduced in Theorem~\ref{thm_1}
is the minimax rate of testing and the exact separation constant associated with this rate is equal to one. This also
implies that the testing procedure proposed in previous subsection is not only minimax rate-optimal but also minimax
sharp-optimal among all possible testing procedures. In this subsection, we consider the functional classes
$\Sigma = \Sigma_{p,L}$ defined by
$$
\Sigma_{p,L}=\Big\{f=\sum_{l\in\mathcal{L}}\nolimits \theta_l[f]\varphi_l:\ \sum_{l\in\mathcal{L}}\nolimits c_l\theta_l[f]^2\leq 1,\
\|f\|_p\le L,\ \Pi_{S_F^c}f=0 \Big\}.
$$
Clearly, for $p>4$, this functional class is smaller than those satisfying conditions of Theorem~\ref{thm_1}. Therefore,
any lower bound proven for these functional classes will also be a lower bound for the functional classes for which Theorem~\ref{thm_1}
is applicable.
\begin{theorem}\label{thm_2}
Assume that $\xi_i$s are standard Gaussian random variables and that for every $T>0$, the set $\NL(T)=\{l\in S_F: c_l<Tq_l\}$ is finite.
For a prescribed significance level $\gamma\in(0,1)$, let $T_{n,\gamma}$ and $r_{n,\gamma}^*$ be as in Theorem~\ref{thm_1}.
If conditions {\bf [C1], [C3]} and
\begin{itemize}
\item[]{\bf {[C8]}} as $n\to\infty$, $|\mcN(T_{n,\gamma})|\to\infty$ so that $|\mcN(T_{n,\gamma})|\log(|\mcN(T_{n,\gamma})|)=o(n)$,
\item[]{\bf {[C9]}} as $n\to\infty$, $\max_{l\in\mcN(T_{n,\gamma})} c_l=o(n|\mcN(T_{n,\gamma})|^{1/2})$,
\end{itemize}
are fulfilled, then for every $C<1$ the minimax risk satisfies
\begin{equation}\label{eq:28}
\gamma_n(\mcF_0,\mcF_1(C r_{n,\gamma}^*)) \ge \gamma+o(1),\qquad\text{as}\qquad n\to\infty.
\end{equation}
\end{theorem}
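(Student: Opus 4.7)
I would prove this sharp minimax lower bound via a Bayesian reduction: construct a prior $\pi$ supported on $\Sigma_{p,L}\cap\mcF_1(Cr_{n,\gamma}^*)$ and bound from below the Bayes risk of testing $f\equiv 0\in\mcF_0$ against the mixture $P_\pi=\int P_f\,\pi(df)$. Because the $\xi_i$ are standard Gaussian and the design is independent of $f$, the likelihood ratio admits the explicit form
$$L_n=\frac{dP_\pi}{dP_0}=\int\exp\!\Big(\sum_{i=1}^{n}\xi_i f(\bt_i)-\tfrac12\sum_{i=1}^{n} f(\bt_i)^2\Big)\,\pi(df),$$
and Le Cam's inequality gives $\gamma_n(\mcF_0,\mcF_1(Cr_{n,\gamma}^*))\ge 1-\|P_\pi-P_0\|_{\rm TV}-\pi\big(\Sigma_{p,L}^c\cup\mcF_1(Cr_{n,\gamma}^*)^c\big)$. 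Following the Ingster--Ermakov strategy, the aim is a log-normal asymptotics $\log L_n\Rightarrow\mcN(-\sigma^2/2,\sigma^2)$ under $P_0$ with $\sigma^2=4C^4 z_{1-\gamma/2}^2$, which would then yield $\gamma_n\ge 2\Phi(-C^2 z_{1-\gamma/2})+o(1)>\gamma+o(1)$ for any $C<1$.

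\textbf{Prior and calibration.} Motivated by Proposition~\ref{prop_3}, the natural choice is
$$f_\varepsilon=C\!\!\sum_{l\in\mcN(T_{n,\gamma})}\!\!\sqrt{v_l^*}\,\varepsilon_l\,\varphi_l,\qquad v_l^*=\frac{(T_{n,\gamma}q_l-c_l)_+}{\sum_{l'}c_{l'}(T_{n,\gamma}q_{l'}-c_{l'})_+},$$
with $\{\varepsilon_l\}$ i.i.d.\ Rademacher. This makes $\sum_l c_l\theta_l[f_\varepsilon]^2=C^2\le 1$ and $\sum_l q_l\theta_l[f_\varepsilon]^2=C^2(r_{n,\gamma}^*)^2$ hold deterministically, and $\Pi_{S_F^c}f_\varepsilon=0$ by construction, so every realization of $\pi$ lies in the ellipsoid part of $\Sigma_{p,L}$ and in $\mcF_1(Cr_{n,\gamma}^*)$. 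The remaining $\|f_\varepsilon\|_p\le L$ constraint is verified with probability $1-o(1)$ by Khintchine's inequality for Rademacher sums, together with [C3] (bounding $\sum_l v_l^*\varphi_l^2(\bt)$ pointwise) and [C9] (controlling the effective scale of the coefficients).

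\textbf{Core computation.} The Gaussian-design structure reduces the second moment of $L_n$ to
$$\mathbb{E}_0[L_n^2]=\mathbb{E}_{\varepsilon,\varepsilon'}\!\Big[\Big(\int_\Delta e^{f_\varepsilon(\bt)f_{\varepsilon'}(\bt)}\,d\bt\Big)^n\Big],$$
where $\varepsilon'$ is an independent copy of $\varepsilon$. A Taylor expansion yields $\int_\Delta e^{f_\varepsilon f_{\varepsilon'}}d\bt=1+C^2\sum_l v_l^*\varepsilon_l\varepsilon'_l+R_n(\varepsilon,\varepsilon')$, so that raising to the $n$-th power and averaging over the Rademacher variables produces
$$\mathbb{E}_0[L_n^2]\le\prod_{l}\cosh(nC^2 v_l^*)\cdot(1+o(1))\le \exp\!\big(\tfrac12 n^2 C^4\|\bv^*\|_2^2\big)\cdot(1+o(1));$$
by [C1] all $v_l^*$ are of the same order, and by~(\ref{eq:24}) $n^2\|\bv^*\|_2^2\to 8z_{1-\gamma/2}^2$, hence $\chi^2(P_\pi\|P_0)+1\to e^{4C^4 z_{1-\gamma/2}^2}$. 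The full log-normal limit of $L_n$ itself follows from a Lindeberg CLT applied to the independent per-observation contributions to $\log L_n$, which combined with the standard identity $\gamma_n\ge 1-\mathbb{E}_0[(L_n-1)_+]\to 2\Phi(-\sigma/2)$ closes the argument.

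\textbf{Main obstacle.} The most delicate step is the uniform control of the Taylor remainder $R_n$: although $\int_\Delta f_\varepsilon f_{\varepsilon'}\,d\bt=O(1/n)$, the pointwise product can be substantially larger, and raising to the $n$-th power amplifies every polynomial correction. Bounding $\sup_\bt|f_\varepsilon(\bt)f_{\varepsilon'}(\bt)|$ relies on [C3], while ensuring that the cumulative remainder remains negligible after the Rademacher average requires [C8] (the logarithmic strengthening of [C4]) to absorb a union-bound factor of $|\mcN(T_{n,\gamma})|$. Pinning down the sharp constant $\sigma^2=4C^4z_{1-\gamma/2}^2$ in the CLT---rather than merely an upper bound on $\chi^2$---is the second non-trivial piece, requiring fine control of the quadratic form $\sum_l v_l^*\varepsilon_l\varepsilon'_l$ via [C1] together with the scale calibration afforded by [C9].
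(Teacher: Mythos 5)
Your overall strategy (Bayesian reduction, a prior calibrated on $\bv^*$ from Proposition~\ref{prop_3}, and a Gaussian-type limit for the log-likelihood ratio giving $2\Phi(-u_n/2)$ via the analogue of Lemma~\ref{lem:1}) is the right one and matches the paper in outline. The prior itself is even slightly cleaner than the paper's: with Rademacher signs the constraints $\langle\bc,\ttheta^2\rangle\le 1$ and $\langle\bq,\ttheta^2\rangle\ge C^2(r_{n,\gamma}^*)^2$ hold deterministically, so you do not need the Tchebychev argument of Lemma~\ref{lem:2}. But there is a gap at the decisive step.

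The problem is the claim that the log-normal limit of $L_n$ ``follows from a Lindeberg CLT applied to the independent per-observation contributions to $\log L_n$.'' No such decomposition exists: $L_n=E_\varepsilon\big[\prod_{i=1}^n\exp(x_if_\varepsilon(\bt_i)-\frac12 f_\varepsilon(\bt_i)^2)\big]$ has the expectation over $\varepsilon$ \emph{outside} the product over $i$, so $\log L_n$ is neither a sum over observations nor a sum over frequencies. In the Gaussian sequence model the Rademacher prior works because the likelihood ratio factorizes over $l$ into $\prod_l\cosh(\cdot)e^{-\cdot}$; in the random-design regression model the empirical coefficients $\frac1n\sum_i x_i\varphi_l(\bt_i)$ are correlated across $l$ and no factorization is available. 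Your second-moment computation $E_0[L_n^2]\to e^{4C^4z_{1-\gamma/2}^2}$, even if the remainder $R_n$ is controlled, is not sufficient: the Le Cam/chi-square bound $\gamma_n\ge 1-\frac12(E_0[L_n^2]-1)^{1/2}$ is vacuous here because the limit of $E_0[L_n^2]$ is a large constant, not $1+o(1)$; to extract $2\Phi(-\sigma/2)$ you need the actual weak limit of $\log L_n$ (or of $E_0|L_n-1|$), and bounded $\chi^2$ does not imply it. This is precisely why the paper takes $\pi_n$ Gaussian: then $P_{\pi_n}$ is itself, conditionally on the design, a centered Gaussian with covariance $\ttR_n=\Phi_n^\top\ttA_n\Phi_n+\ttI_n$, so $\log(dP_{\pi_n}/dP_0)$ is an explicit quadratic form. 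The analysis then reduces to (i) a random-matrix concentration bound $\matnorm{\frac1n\Phi_n\Phi_n^\top-\ttI_m}=O((m\log m/n)^{1/2})$ --- this is where {\bf [C3]} and the logarithmic strengthening {\bf [C8]} actually enter, not a union bound over a Taylor remainder --- which lets one replace $\ttR_n$ by the diagonal matrix $n\ttA_n+\ttI_m$, and (ii) a triangular-array CLT for the resulting sum of independent $\chi^2_1$-type terms. To repair your argument you would either have to switch to the Gaussian prior, or supply a genuinely new CLT for $\log L_n$ under the Rademacher mixture (e.g.\ via a Hoeffding/martingale decomposition of $L_n$), which is a substantially harder and different piece of work than what you sketch.
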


Although the main steps of the proof of this theorem, postponed to the Appendix, are close to those of \citep{ingster2009minimax},
we have made several improvements which resulted in both shorter and more transparent proof and relaxed assumptions.
The most notable improvement is perhaps the fact that in condition {\bf [C3]} it is not necessary to have $C_3=1$. We
will further discuss this point and the other assumptions in the next section.

\begin{remark}\label{tauxseulementlower}
If we were only interested in minimax rate-optimality, we could have used simpler prior in the proof of Theorem~\ref{thm_2}
which would also yield the desired lower bound under slightly weaker assumptions. One can also deduce from the proof that
for a concrete pair $(\bc,\bq)$, a simple way to deduce the minimax rate of separation consists in finding
a sequence $r_n$ such that ${n(r_n)^2}\asymp M(r_n^{-2})^{1/2}$, where $M(T) = \sum_{l\in\mcN(T)} q_l^2$.
\end{remark}

\section{Examples}\label{sec:examples}
\subsection{Bases satisfying assumption {\bf[C3]}}\label{ssec:3.1}

First we give examples of orthonormal bases satisfying assumption {\bf [C3]}, irrespectively of the nature of arrays
$\bc$ and $\bq$ defining the smoothness class and the quadratic functional $Q$. One can take note that despite more general
settings considered in the present work, our assumption {\bf [C3]} is significantly weaker than the corresponding assumption in
\citep{ingster2009minimax}, which requires $C_3$ to be equal to one. In fact, in a remark, \cite{ingster2009minimax} suggest that
their proof remains valid under our assumption {\bf [C3]} if assumption {\bf [C4]} is strengthened to
$|\mcN(T_{n,\gamma})|=o(n^{2/3})$.  Due to a better analysis, we succeeded to establish sharp asymptotics under the
weak version of {\bf [C3]} without any additional price (except that a logarithmic factor appears now in the corresponding condition
in Theorem~\ref{thm_2}).

\paragraph{Fourier basis} Let us consider first the following Fourier basis in dimension $d$ for which $\mcL=\ZZ^d$ and
\begin{equation}\label{trig}
\varphi_\bk(\bt)=
\begin{cases}
1, & \bk=0 ,\\
\sqrt{2}\cos(2\pi\,\bk\cdot\bt), & \bk\in(\ZZ^d)_+,\\
\sqrt{2}\sin(2\pi\,\bk\cdot\bt), &-\bk\in(\ZZ^d)_+ ,
\end{cases}
\end{equation}
where $(\ZZ^d)_+$ denotes the set of all $\bk\in\ZZ^d\setminus\{0\}$ such that the first nonzero element of $\bk$ is positive and
$\bk\cdot\bt$ stands for the usual inner product in $\RR^d$. Since all the basis functions are bounded by $\sqrt{2}$, {\bf [C3]}
is obviously satisfied with $C_3=2$. Furthermore, if the set $\mcN(T)$ is symmetric, \textit{i.e.}, $\bk\in\mcN(T)$ implies $-\bk\in\mcN(T)$,
then  {\bf [C3]} is fulfilled with $C_3=1$.

\paragraph{Tensor product Fourier basis}
We can also consider the traditional tensor product Fourier basis as in \cite{ingster2009minimax}.
{\bf [C3]} is then obviously satisfied with $C_3=2^d$. Moreover,
if the set $\mcN(T)$ is orthosymmetric, \textit{i.e.}, $(k_1,\ldots,k_d)\in\mcN(T)$ implies $(\pm k_1,\ldots,\pm k_d)\in\mcN(T)$,
then  {\bf [C3]} is fulfilled with $C_3=1$.

\paragraph{Haar basis}
Let $\big\{\varphi_{j,k}(\cdot), j\in\NN, k\in\{1,\ldots,2^j\}\big\}$, be the standard orthonormal Haar basis on $[0,1]$, where $j$ is the
scale parameter and $k$ is the shift. The tensor product  $(\varphi_{\bj,\bk})_{\bj,\bk}$ Haar basis is then
$$
\varphi_{\bj,\bk}=\prod_{i=1}^d \varphi_{j_i,k_i},
$$
where $\bj=(j_1,\ldots,j_d)$ and $\bk=(k_1,\ldots,k_d)$. As shown in \citep{ingster2009minimax}, under the extra assumption that the coefficients $c_l=c_{\bj,\bk}$ and $q_l=q_{\bj,\bk}$ depend only on the scale parameter, \textit{i.e.}, $c_{\bj,\bk}=c_{\bj}$ and $q_{\bj,\bk}=q_{\bj}$,
assumption {[C3]} is satisfied with $C_3=1$. Note that the same holds true for the multivariate Haar basis defined in the more commonly used way
(see  \cite{cohen2003numerical},  chapter 2): $\big\{\varphi_l(\bt)=\prod_{i=1}^d \psi_{j,k_i}^{\omega_i}(t_i) \big\}$, where $l=(j,\bk,\oomega)$
such that $j\in\NN$, $\bk\in\{1,\ldots,2^j\}^d$ and  $\oomega\in\{0,1\}^d\setminus\{0\}$ with $\psi_{j,k}^{0}$ and $\psi_{j,k}^{1}$ being
 the scaled and shifted mother wavelet and father wavelet, respectively.

\paragraph{Compactly supported wavelet basis}
Since we are not limited to the case $C_3=1$, any orthonormal wavelet basis satisfies assumption {\bf [C3]}, as long as the wavelets are
compactly supported and provided that the coefficients $c_l$ and $q_l$ depend on the level of the resolution and not on the shift.

\subsection{Examples of estimators satisfying {\bf[C6]}}\label{ssec:3.15}

We present below pilot estimators that in two different contexts satisfy assumption \textbf{[C6]}.


\paragraph{Tensor-product Fourier basis}

For the first example, we assume that the orthonormal system $\{\varphi_l\}$ is the tensor product Fourier basis.
Then we have $\sup_l \sup_{t\in \Delta} |\varphi_l(t)|\leq 2^{d/2}$. The anisotropic Sobolev ball with
radius $R$ and smoothness $\ssigma=(\sigma_1,\ldots,\sigma_d)\in (0,\infty)^d$ is defined by
$$
W_{2}^{\ssigma} (R) = \Big\{f : \sum_{\bl\in\ZZ^d}\nolimits\sum_{i=1}^d\nolimits (2\pi l_i)^{2\sigma_i} \theta_\bl[f]^2\le R\Big\}.
$$
The estimator we suggest to use is constructed as follows. We first estimate $\theta_l[f]$ by
$\widehat\theta_l=\frac1n\sum_{i=1}^n x_i\varphi_l(\bt_i)$. Then we choose a tuning parameter
$T=T_n>0$ and define the pilot estimator
\begin{equation}\label{pilot}
\proj=\sum_{l\in S_F^c: c_l< T}\widehat{\theta}_l\varphi_l.
\end{equation}
To ease notation, we set $\mcN_1(T)=\{l\in S_F^c: c_l<T\}$ and $\mcN_2(T)=S_F^c\setminus \mcN_1(T)$.

\begin{lemma}\label{lem:3}
Assume that either one of the following conditions is satisfied:
\begin{itemize}
\item $\bc$ satisfies the condition $\sum_l c_l^{-1}<\infty$,
\item $\Sigma\subset W_2^\ssigma(R)$ for some $R>0$ and for some $\ssigma\in(0,\infty)^d$ such that
$\bar\sigma=(\frac1d\sum_i \frac1{\sigma_i})^{-1}> d/4$.
\end{itemize}
If $T=T_n\to\infty$ so
that $|\mcN_1(T)|=o(n^{1/2})$, then $\proj$ defined by (\ref{pilot}) satisfies {\bf [C6]}.
\end{lemma}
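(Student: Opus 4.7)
I split the error as $\proj-\pf = V - B$, where
$V=\sum_{l\in\mcN_1(T)}(\widehat{\theta}_l-\theta_l)\,\varphi_l$
is the stochastic estimation piece and
$B=\sum_{l\in\mcN_2(T)}\theta_l\,\varphi_l$
is the deterministic tail-bias piece; control each separately in the $L_4$ norm, then recombine by the elementary convexity inequality $\|B-V\|_4^4\le 8(\|V\|_4^4+\|B\|_4^4)$. The workhorse estimate throughout is $\|g\|_4^4\le\|g\|_\infty^2\|g\|_2^2$: combined with $\sup_l\|\varphi_l\|_\infty\le 2^{d/2}$ for the tensor-product Fourier basis and a Cauchy--Schwarz, any $g=\sum_{l\in L}\alpha_l\varphi_l$ supported on a finite index set $L$ satisfies
\[
\|g\|_\infty\le 2^{d/2}|L|^{1/2}\|g\|_2,\qquad \|g\|_4^4\le 2^d|L|\,\|g\|_2^4.
\]

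Applied to $V$ with $L=\mcN_1(T)$, this reduces the variance analysis to bounding $E_f\|V\|_2^4$. Since $\widehat{\theta}_l-\theta_l=\frac1n\sum_{i}Z_{i,l}$ with $Z_{i,l}=x_i\varphi_l(\bt_i)-\theta_l$ i.i.d.\ and centred, and since $E[Z_{1,l}^4]$ is bounded uniformly in $l$ and $f\in\Sigma$ (using $E[\xi_1^4]<\infty$ together with either the uniform bound $\|f\|_\infty\le 2^{d/2}(\sum_l c_l^{-1})^{1/2}$ under (i), or the anisotropic Sobolev embedding $W_2^\ssigma(R)\hookrightarrow L_4$ under (ii)), a direct moment computation gives $E(\widehat\theta_l-\theta_l)^4=O(n^{-2})$ uniformly. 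A Cauchy--Schwarz on the expansion $E\|V\|_2^4=\sum_{l,l'}E[(\widehat\theta_l-\theta_l)^2(\widehat\theta_{l'}-\theta_{l'})^2]$ then yields
\[
E_f\|V\|_2^4\le\Bigl(\sum_{l\in\mcN_1(T)}\bigl(E(\widehat\theta_l-\theta_l)^4\bigr)^{1/2}\Bigr)^2 = O\bigl(|\mcN_1(T)|^2/n^2\bigr),
\]
so $E_f\|V\|_4^4=O(|\mcN_1(T)|^3/n^2)$, which is $o(1)$ under the assumed $|\mcN_1(T)|=o(n^{1/2})$.

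For the bias, the ellipsoid constraint immediately gives $\|B\|_2^2=\sum_{l\in\mcN_2(T)}\theta_l^2\le T^{-1}\sum_l c_l\theta_l^2\le T^{-1}\to 0$. Under (i), a second Cauchy--Schwarz yields
\[
\|B\|_\infty\le 2^{d/2}\sum_{l\in\mcN_2(T)}|\theta_l|\le 2^{d/2}\Bigl(\sum_{l\in\mcN_2(T)}c_l^{-1}\Bigr)^{1/2},
\]
whose right-hand side tends to zero as $T\to\infty$ by dominated convergence, so $\|B\|_4^4\le\|B\|_\infty^2\|B\|_2^2\to 0$. Under (ii), $B$ may fail to belong to $L_\infty$, so I interpolate instead: pick $\alpha\in(d/(4\bar\sigma),1)$ (nonempty since $\bar\sigma>d/4$); on $\mcN_2(T)$ we have $c_l\ge T$, hence $c_l^\alpha\le T^{\alpha-1}\,c_l$, giving $\|B\|_{W_2^{\alpha\ssigma}}^2=\sum_{l\in\mcN_2(T)}c_l^\alpha\theta_l^2\le T^{\alpha-1}\to 0$; the anisotropic Sobolev embedding $W_2^{\alpha\ssigma}\hookrightarrow L_4$, valid precisely because $\alpha\bar\sigma>d/4$, then delivers $\|B\|_4\to 0$.

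The hard part is the $L_4$ control of the bias in case (ii): the naive route through $\|B\|_\infty$ requires the anisotropic embedding into $L_\infty$, which demands $\bar\sigma>d/2$, whereas the lemma only assumes $\bar\sigma>d/4$---exactly the threshold for $L_4$. Shrinking the smoothness exponent to $\alpha\ssigma$ with $\alpha<1$ is what converts the tail gain $c_l\ge T$ into an $L_4$ estimate, and one must use that the harmonic mean of the rescaled exponents $\alpha\sigma_i$ is precisely $\alpha\bar\sigma$ when invoking the anisotropic embedding.
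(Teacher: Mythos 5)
Your proof is correct and follows the same decomposition as the paper's: split $\proj-\pf$ into the stochastic part supported on $\mcN_1(T)$ and the deterministic tail bias on $\mcN_2(T)$; bound the bias in case (i) by Cauchy--Schwarz against $\sum_{l\in\mcN_2(T)}c_l^{-1}$; and in case (ii) trade the gain $c_l\ge T$ on the tail for a reduced smoothness index $\alpha\ssigma$ with $\alpha\bar\sigma>d/4$ and invoke the anisotropic Sobolev embedding into $L_4$ --- this is exactly the paper's choice $\ssigma'=(1-\tau)\ssigma$ with $\tau<1-d/(4\bar\sigma)$. The only genuine divergence is in the stochastic term: the paper expands $E_f\|\Pi_1f-\proj\|_4^4$ directly and applies Rosenthal's inequality followed by H\"older's inequality, obtaining $O(|\mcN_1(T)|^4/n^2)$, whereas you first reduce $\|V\|_4^4\le 2^d|\mcN_1(T)|\,\|V\|_2^4$ via the uniform bound on the Fourier basis and then control $E_f\|V\|_2^4$ by elementary fourth-moment bounds on the empirical coefficients, obtaining the slightly sharper $O(|\mcN_1(T)|^3/n^2)$. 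Your route avoids Rosenthal's inequality entirely and would in fact tolerate the weaker requirement $|\mcN_1(T)|=o(n^{2/3})$; both arguments go through under the stated hypothesis $|\mcN_1(T)|=o(n^{1/2})$, and uniformity over $f\in\Sigma$ is secured in both by the same device (boundedness of $\|f\|_\infty$ under (i), of $\|f\|_4$ under (ii)).
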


\paragraph{Compactly supported orthonormal wavelet basis} The same method can be applied in the case of an
orthonormal basis of compactly supported wavelets of $L_2[0,1]^d$. We suppose that the coefficients
$c_l=c_{j,\bk}$  correspond to those of a Besov ball $B_{2,2}^s$, \textit{i.e.}, $c_j=2^{js}$, and
that  $\sigma=s-d/4>0$. Let us set, for $J\in \NN$,
$$
\proj=\sum_{\bk\in [1,2^J]^d}\nolimits\widehat{\alpha}_{J,\bk}\varphi_{J,\bk}\quad\text{where}\quad
\widehat{\alpha}_{J,\bk}=\frac{1}{n}\sum_{i=1}^n\nolimits x_i\varphi_{J,\bk}(\bt_i).
$$
\begin{lemma}\label{lem:4}
If $J=J_n$ tends to infinity so that $2^{Jd}=o(n)$, then $\sup_{f\in\Sigma} E_f\|\pf-\proj\|^4\to 0$ as $n\to\infty$.
\end{lemma}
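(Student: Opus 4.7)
The plan is the standard bias--variance decomposition. Define $g := E_f[\proj] = \sum_\bk \alpha_{J,\bk}\varphi_{J,\bk}$; the random-design identity $E[x_1\varphi_{J,\bk}(\bt_1)] = \int f\varphi_{J,\bk}$ shows $g = P_{V_J}(f)$, the $L_2$-orthogonal projection of $f$ onto $V_J := \mathrm{span}\{\varphi_{J,\bk} : \bk \in [1,2^J]^d\}$. Then
\begin{equation*}
E_f\|\pf - \proj\|_4^4 \le 8\bigl(\|\pf - g\|_4^4 + E_f\|g - \proj\|_4^4\bigr),
\end{equation*}
so it suffices to show that the deterministic bias and the stochastic variance both vanish uniformly over $\Sigma$.

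For the bias, the hypothesis $c_j = 2^{js}$ identifies $\Sigma$ with a bounded subset of the Besov ball $B^s_{2,2}(\Delta)$. The condition $\sigma := s-d/4 > 0$ permits the embedding $B^s_{2,2}\hookrightarrow B^\sigma_{4,\infty}$, giving a uniform bound on $\|f\|_{B^\sigma_{4,\infty}}$. The Jackson--type approximation estimate for compactly supported wavelets then gives $\|f - P_{V_J}(f)\|_4 \le C\,2^{-J\sigma}$. Since in this wavelet setting the scales supporting $\bq$ lie in detail spaces complementary to $V_J$ once $J$ is large, $\Pi_{S_F}(f)\perp V_J$ and hence $P_{V_J}(f) = P_{V_J}(\pf)$, so $\|\pf - g\|_4 \le C\,2^{-J\sigma}\to 0$.

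For the variance I would exploit the reproducing kernel $K_J(\bu,\bv) := \sum_\bk \varphi_{J,\bk}(\bu)\varphi_{J,\bk}(\bv)$ of $V_J$ to rewrite
\begin{equation*}
\proj(\bt) - g(\bt) = \frac{1}{n}\sum_{i=1}^n W_i(\bt),\qquad W_i(\bt) := x_iK_J(\bt_i,\bt) - g(\bt),
\end{equation*}
so that for each fixed $\bt$ the $W_i(\bt)$ are i.i.d.\ centered. The Marcinkiewicz--Zygmund / Rosenthal inequality (using $E\xi_1^4 < \infty$) gives
\begin{equation*}
E[(\proj(\bt) - g(\bt))^4] \le \frac{C}{n^3}E[W_1(\bt)^4] + \frac{C}{n^2}(E[W_1(\bt)^2])^2.
\end{equation*}
Integrating in $\bt$ and repeatedly invoking the reproducing identity $\int K_J(\bu,\bt)^2 d\bt = K_J(\bu,\bu) \le C\,2^{Jd}$ together with the uniform bound $\|f\|_4 \le C$ from the Besov embedding, I expect to obtain $\int (E[W_1(\bt)^2])^2 d\bt \le C\,2^{2Jd}$ and $\int E[W_1(\bt)^4] d\bt \le C\,2^{3Jd}$, whence $E_f\|g - \proj\|_4^4 \le C(2^{2Jd}/n^2 + 2^{3Jd}/n^3) = o(1)$ under the sole condition $2^{Jd} = o(n)$.

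The main obstacle is obtaining the sharp bound $\int(E[W_1(\bt)^2])^2 d\bt \lesssim 2^{2Jd}$: a naive application of $\sup_\bt E[W_1(\bt)^2] \le C\,2^{Jd}$ would only give $2^{3Jd}$, which would force the stronger condition $2^{Jd} = o(n^{2/3})$. Sharpening requires a Cauchy--Schwarz step to split $\int(f^2+1)(\bu)K_J(\bu,\bt)^2 d\bu$ followed by a second application of the reproducing identity, combined with $\|f\|_4\le C$. The bias step is then routine once the Besov embedding and the wavelet approximation order are in hand.
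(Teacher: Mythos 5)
Your proof is correct, and it reaches the paper's bound $O\big(2^{2Jd}/n^2+2^{3Jd}/n^3+2^{-4J\sigma}\big)$ by a route that is parallel in substance but different in bookkeeping. The paper splits the error into \emph{three} pieces by decomposing the empirical coefficients as $\widehat\alpha_{J,\bk}=\tilde\alpha_{J,\bk}+\epsilon_{J,\bk}$ (design fluctuation of $f$ versus observation noise), then works entirely in the coefficient domain: it invokes the level-$J$ wavelet norm equivalence $\|\sum_\bk c_\bk\varphi_{J,\bk}\|_4^4=O(2^{Jd})\sum_\bk c_\bk^4$ and applies Rosenthal to each coefficient separately. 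You instead use a \emph{two}-term bias--variance split with $g=P_{V_J}f$, apply Rosenthal pointwise in $\bt$ to the i.i.d.\ summands $W_i(\bt)=x_iK_J(\bt_i,\bt)-g(\bt)$, and integrate, using the reproducing identity $\int K_J(\bu,\bt)^2\,d\bt=K_J(\bt,\bt)\le C2^{Jd}$ and $\|K_J\|_\infty\le C2^{Jd}$. These kernel bounds are exactly what underlies the norm equivalence the paper uses, so the two arguments rest on the same structural facts about a single wavelet level. The step you flag as the ``main obstacle'' does close as you describe: Cauchy--Schwarz gives $\big(\int(f^2+1)K_J(\cdot,\bt)^2\big)^2\le K_J(\bt,\bt)\int(f^2+1)^2K_J(\cdot,\bt)^2$, and a second application of the reproducing identity together with $\sup_{f\in\Sigma}\|f\|_4<\infty$ (from $s>d/4$) yields $\int(E[W_1(\bt)^2])^2d\bt\le C2^{2Jd}$; note that the paper's coefficient-wise route needs the very same Cauchy--Schwarz manoeuvre to get $\sum_\bk\big(\int f^2\varphi_{J,\bk}^2\big)^2\le\int f^4K_J(\bu,\bu)\,d\bu\le C2^{Jd}\|f\|_4^4$, so you have not introduced any extra difficulty. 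Your Besov-embedding treatment of the bias ($B^s_{2,2}\hookrightarrow B^{\sigma}_{4,\infty}$ plus the Jackson estimate) makes explicit what the paper asserts as $\|\pf-\Pi_Jf\|_4^4=O(2^{-4J\sigma})$, and your remark that $\Pi_{S_F}f\perp V_J$ for large $J$ is the same implicit convention the paper uses when it identifies the level-$J$ scaling projection of $f$ with that of $\pf$. What the kernel formulation buys you is that the sharp $2^{2Jd}/n^2$ term (rather than $2^{3Jd}/n^2$) falls out of a single integration; what the paper's formulation buys is that the $2^{Jd}$ factors are packaged once and for all into the $\ell_4\to L_4$ norm equivalence, making the moment computations one-dimensional.
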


In the  following two subsections, we apply the previous results to two examples of quadratic functionals involving derivatives. The orthonormal system we use is the tensor product Fourier basis.

\subsection {Testing partial derivatives }\label{ssec:3.2}

We assume here that $f$ belongs to a Sobolev class with anisotropic constraints and
the quadratic functional $Q$ corresponds, roughly speaking, to the squared $L_2$-norm of a
partial derivative. More precisely, let $\aalpha\in\RR_+^d$ and $\ssigma\in\RR_+^d$ be two
given vectors and define, for every $\bl\in\mathcal{L}=\ZZ^d\setminus\{0\}$,
$$
q_\bl=\prod_{j=1}^{d}\nolimits(2\pi l_j)^{2\alpha_j},\qquad\text{and}\qquad
c_\bl=\sum_{j=1}^{d}\nolimits(2\pi l_j)^{2\sigma_j}.
$$
We will assume that $\sum_{j=1}^{d}(\alpha_j/\sigma_j)<1$.

For a function
$f=\sum_{l\in\mathcal{L}}\theta_l\varphi_l\in L_2(\Delta)$, we set $\|f\|_{2,c}^2=\sum_{l\in\mathcal{L}} c_l \theta_l^2$ and  $\|f\|_{2,q}^2=\sum_{l\in\mathcal{L}}  q_l \theta_l^2$.
Then, for a 1-periodic function which is differentiable enough, and if the $\alpha_j$ and $\sigma_j$ are integers, we have
$$
\|f\|_{2,q}^2=\|\partial^{\sum_j\alpha_j}f/\partial t_1^{\alpha_1}\ldots\partial t_{d}^{\alpha_{d}}\|_2^2,\qquad
\text{and}\qquad
\|f\|_{2,c}^2=\sum_{j=1}^d\nolimits\|\partial^{\sigma_j}f/\partial t_j^{\sigma_j}\|_2^2.
$$	

\begin{proposition}\label{premierex}
Let us define $\delta$, $\bar\sigma$, $(\kappa_j)$ and $\kappa$ by $\delta= \sum_{j=1}^{d}\alpha_j/\sigma_j$,
$\frac{1}{\bar\sigma}=\frac1d\sum_{j=1}^d \frac{1}{\sigma_j}$, $\kappa_j=\frac{1}{2\sigma_j}+\frac{\alpha_j}{\sigma_j}\frac{4\bar\sigma+d}{2\bar\sigma(1-\delta)}$
and $\kappa=\sum_{j=1}^d \kappa_j$. If $\delta<1$ and $\bar\sigma>d/4$, then the exact minimax rate $r_{n,\gamma}^*$ is given by $r_{n,\gamma}^*=C^*_\gamma r_n^*(1+o(1))$,
where the minimax rate $r_n^*$ and the exact separation constant are
$$
r_n^*= n^{-\frac{2\bar\sigma(1-\delta)}{4\bar\sigma+d}},\qquad\text{and}\qquad
C_\gamma^*=\big(4z^2_{1-\gamma/2} \kappa C(d, \ssigma, \aalpha)\big)^{\frac{\bar\sigma(1-\delta)}{4\bar\sigma+d}}(1+2\kappa^{-1})
^{\frac{2(1+\delta)\bar\sigma+d}{2(4\bar\sigma+d)}}
$$
with
$$
C(d, \ssigma, \aalpha)=\pi^{-d}\frac{\prod_{i=1}^{d}\Gamma(\kappa_i)}{ \big(\prod_{i=1}^{d}\sigma_i\big)(1-\delta)\Gamma(\kappa+2)}.
$$
Furthermore, the sequence of linear U-tests $\phi_n$ of Theorem \ref{thm_1} is asymptotically minimax with
$T_{n,\gamma}\sim (r_{n,\gamma}^*)^{-2}(1+2\kappa^{-1})$.
\end{proposition}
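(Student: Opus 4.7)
The plan is to instantiate Theorems~\ref{thm_1} and~\ref{thm_2} with the specific coefficients $q_\bl=\prod_j(2\pi l_j)^{2\alpha_j}$ and $c_\bl=\sum_j(2\pi l_j)^{2\sigma_j}$, and to asymptotically evaluate the three sums
$M_0(T)=\sum_\bl(Tq_\bl-c_\bl)_+^2$,
$M_1(T)=\sum_\bl c_\bl(Tq_\bl-c_\bl)_+$ and
$M_2(T)=\sum_\bl q_\bl(Tq_\bl-c_\bl)_+$
that enter the definitions of $T_{n,\gamma}$ and $r_{n,\gamma}^*$ in (\ref{eq:24})--(\ref{eq:25}).

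The anisotropic scaling is the key first step. Setting $a_j=(2\sigma_j(1-\delta))^{-1}$ and writing $l_j=T^{a_j}v_j$, a direct computation shows that $Tq_\bl=T^{K}\prod(2\pi v_j)^{2\alpha_j}$ and $c_\bl=T^{K}\sum(2\pi v_j)^{2\sigma_j}$ with $K=1/(1-\delta)$, so that in the rescaled coordinates $\mcN(T)$ tends to a fixed anisotropic region $\mcD\subset\RR^d$. Because $\sum_j a_j=d/(2\bar\sigma(1-\delta))\to\infty$, the lattice spacing in the rescaled coordinates tends to zero, and each of $M_0,M_1,M_2$ is equivalent to its Riemann-integral analogue over $\mcD$. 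Collecting powers of $T$ yields $M_0(T)\asymp M_1(T)\asymp T^{(4\bar\sigma+d)/(2\bar\sigma(1-\delta))}$ while $M_2(T)/M_1(T)=(1+2\kappa^{-1})T^{-1}(1+o(1))$.

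To put the integrals in closed form I would make the nested substitutions $w_j=(2\pi v_j)^{2\sigma_j}$, which turns $\mcD$ into $\{\prod_jw_j^{\alpha_j/\sigma_j}>\sum_jw_j\}$ with Jacobian $\prod_j(2\pi)^{-1}(2\sigma_j)^{-1}w_j^{1/(2\sigma_j)-1}$, and then $w_j=Ay_j$ with $\by$ on the standard simplex and $A>0$ a radial variable. The radial integration in $A$ reduces to a beta-type integral, while the angular integration is a Dirichlet integral on the simplex; together they produce the factor $\prod_i\Gamma(\kappa_i)/\Gamma(\kappa+2)$ with prefactor $\pi^{-d}(\prod_i\sigma_i)^{-1}(1-\delta)^{-1}$, thereby giving the constant $C(d,\ssigma,\aalpha)$. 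Equation (\ref{eq:24}) then reads $n^2T^{-(4\bar\sigma+d)/(2\bar\sigma(1-\delta))}\sim 8z^2_{1-\gamma/2}\,\kappa\,C(d,\ssigma,\aalpha)$, which I solve for $T_{n,\gamma}$; substituting into (\ref{eq:25}) and using $r_{n,\gamma}^{*2}\sim(1+2\kappa^{-1})T_{n,\gamma}^{-1}$ delivers both $r_n^*=n^{-2\bar\sigma(1-\delta)/(4\bar\sigma+d)}$ and the announced $C_\gamma^*$.

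What remains is to check the hypotheses of Theorems~\ref{thm_1} and~\ref{thm_2}. Condition [C3] with $C_3=2^d$ is the content of Subsection~\ref{ssec:3.1}, [C6] follows from Lemma~\ref{lem:3} thanks to $\bar\sigma>d/4$, and [C7] is immediate for tensor Fourier bases. The remaining conditions [C1],[C2],[C4],[C5],[C8],[C9] reduce to elementary asymptotics on $|\mcN(T_{n,\gamma})|$ and $\max/\min_{\mcN(T_{n,\gamma})}q_\bl$ that come directly from the scaling $l_j\asymp T^{a_j}$ already established. I expect the main technical obstacle to be controlling the Riemann-sum error uniformly near the boundary $\partial\mcD$, where $(Tq_\bl-c_\bl)_+$ fails to be smooth; this should be handled by splitting $\mcN(T_{n,\gamma})$ into an interior part and a boundary layer of width $o(1)$ in rescaled coordinates, whose contribution to each $M_k$ is negligible compared to the bulk, so that the integral approximations extend uniformly and the limiting constants are obtained without loss.
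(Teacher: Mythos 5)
Your overall strategy is exactly the paper's: rescale $l_j=T^{a_j}v_j$ with $a_j=(2\sigma_j(1-\delta))^{-1}$, pass from sums to integrals, reduce the integrals to Dirichlet/Beta form (the paper invokes the Liouville formula where you use a radial--angular split, which is the same computation), then solve (\ref{eq:24})--(\ref{eq:25}) and verify \textbf{[C1]}--\textbf{[C9]}. Your observation that the rescaled region is contained in $[0,1]^d$ (so $\max_{\bl\in\mcN(T)}q_\bl\le CT^{\delta/(1-\delta)}$ follows directly from the scaling) is in fact cleaner than the paper's iterative elimination argument for that bound.

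There is, however, a concrete error in the one step that actually produces the constant. With $\beta=\frac{4\delta\bar\sigma+d}{2(1-\delta)\bar\sigma}$, your own asymptotics give $M_0(T)\sim\frac{2C(d,\ssigma,\aalpha)}{\kappa+2}T^{2+\beta}$ and $M_1(T)\sim\frac{\kappa\,C(d,\ssigma,\aalpha)}{\kappa+2}T^{2+\beta}$, and (\ref{eq:24}) squares to $n^2M_0(T)\sim 8z^2_{1-\gamma/2}M_1(T)^2$, i.e.\ $n^2\sim 8z^2_{1-\gamma/2}M_1(T)^2/M_0(T)\sim \frac{4z^2_{1-\gamma/2}\kappa^2}{\kappa+2}C(d,\ssigma,\aalpha)\,T^{2+\beta}$. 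Your displayed relation $n^2T^{-(2+\beta)}\sim 8z^2_{1-\gamma/2}\kappa\,C(d,\ssigma,\aalpha)$ is off by the factor $2(1+2\kappa^{-1})$; solving it for $T_{n,\gamma}$ and substituting into $r_{n,\gamma}^{*2}\sim(1+2\kappa^{-1})T_{n,\gamma}^{-1}$ yields a separation constant differing from the announced $C_\gamma^*$ by $\big(2(1+2\kappa^{-1})\big)^{\frac{2\bar\sigma(1-\delta)}{4\bar\sigma+d}}$, so as written the argument does not deliver the claimed $C_\gamma^*$ --- and the sharp constant is the whole content of the proposition. The correct relation $n^2T_{n,\gamma}^{-(2+\beta)}\sim 4z^2_{1-\gamma/2}C(d,\ssigma,\aalpha)\kappa^2/(\kappa+2)$ is precisely what is solved by $T_{n,\gamma}=(C_\gamma^*r_n^*)^{-2}(1+2\kappa^{-1})$. (Minor additional slip: $\sum_ja_j=d/(2\bar\sigma(1-\delta))$ is a constant, not a quantity tending to infinity; what you need is that each $a_j>0$, so each lattice spacing $T^{-a_j}$ tends to zero.) Everything else --- the boundary-layer control of the Riemann sums and the verification of the conditions --- is sound and matches the paper.
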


\begin{remark}\label{detection}
The previous result can be used for performing dimensionality reduction through variable selection \citep{CD11a}. Indeed, in a high-dimensional
set-up it is of central interest to eliminate the irrelevant covariates. The coordinate $t_i$ of $\bt$ is irrelevant if $f$ is constant
on the line $\{\bt\in\Delta: t_j=a_j \text{ for all } j\not=i\}$, whatever the vector $\ba\in\Delta$ is. This implies that the $i^{th}$
partial derivative of $f$ is zero. Therefore, one can test the relevance of a variable, say $t_1$, by comparing
$\|\partial f/\partial t_1\|_2$ with 0. In our notation, this amounts to testing hypotheses (\ref{eq:5}) with
$Q[f]=\|f\|_{2,\bq}^2$ such that $q_\bl = (2\pi l_1)^2$. Combining Proposition~\ref{premierex} and Theorem~\ref{thm_1},
one can easily deduce a minimax sharp-optimal test and the minimax sharp-rates for this variable selection problem.
\end{remark}

\begin{remark}
Another interesting particular case of the setting described in this subsection concerns the problem of component identification in
partial linear models \citep{Samarov}. We say that $f$ obeys a partial linear model if for some small subset $J$ of indices $\{1,\ldots,d\}$
and for a vector $\bbeta\in\RR^{|J^c|}$, one can write $f(\bt)= g(\bt_J)+ \bbeta\!^\top\bt_{J^c}$ for every $\bt\in\Delta$. The problem of
component identification in this model is to determine for an index $j$ whether $j\in J$ or not. One way of addressing this issue is
to perform a test of hypothesis $Q[f]=\|f\|_{2,\bq}^2=0$, where $q_\bl=(2\pi l_j)^4$. Roughly speaking, this corresponds to checking
whether the second order partial derivative of $f$ with respect to $t_j$ is zero or not (if the null is not rejected, then $j\in J^c$).
Once again, Proposition~\ref{premierex} and Theorem~\ref{thm_1} provide a minimax sharp-optimal test for this problem along with the
minimax rates and exact separation constants.
\end{remark}

\begin{remark}
In the case where the covariates $\bt_i$ are not observable and only $x_i$'s are available, our model coincides with the convolution model,
for which the minimax rates of testing were obtained by \cite{Butucea07} in the one-dimensional case with simple null hypothesis. It would be
interesting to extend our results to such a model and to get minimax rates and, if possible, separation constants in the multidimensional
convolution model.
\end{remark}

\subsection{Testing the relevance of a direction in a single-index model}\label{ssec:3.3}

Recall that a single-index model is a particular case of (\ref{eq:1}) corresponding to functions $f$ that can be written in the form
$f(\bt)=g(\bbeta_0^\top\bt)$ for some univariate function $g:\RR\to\RR$ and some vector $\bbeta_0\in\RR^d$. Assume now that for a candidate
vector $\bbeta\in\RR^d\setminus\{\boldsymbol{0}\}$ we wish to test the goodness-of-fit of the single-index model \citep{DJS, GL}. This corresponds to testing the hypothesis
$$
\exists g:\RR\to\RR\qquad\text{such that}\qquad f(\bt)=g(\bbeta^\top\bt),\quad \forall \bt\in\Delta.
$$
This condition implies that
$\frac{\partial f}{\partial t_i}(\bt)\equiv  \frac{\beta_i}{\|\bbeta\|_2^2}\sum_{j=1}^d \beta_j\frac{\partial f}{\partial t_j}(\bt)=
\frac{\beta_i}{\|\bbeta\|_2^2} \;\bbeta^\top\nabla f(\bt)$, $\forall i\in\{1,\ldots,d\},$
which in turn can be written as
$$
\sum_{i=1}^d \Big( \frac{\partial f}{\partial t_i}-\frac{\beta_i}{\|\bbeta\|_2^2}\;\bbeta^\top\nabla f(\bt) \Big)^2\equiv 0.
$$
Without loss of generality, we assume that $\|\bbeta\|_2=1$ and set
$q_{\bl}= \sum_{i=1}^d (2\pi)^2\big(l_i- (\bbeta^\top \bl) \beta_i\big)^2=(2\pi)^2\big(\|\bl\|_2^2-(\beta^\top\bl)^2\big).$
We consider homogeneous Sobolev smoothness classes, that is $ c_\bl=  \sum_{i=1}^d (2\pi l_i)^{2\sigma}$, with $\sigma>d/4$.
Then, when $\sigma$ is an integer,  for a 1-periodic function which is smooth enough,
$$
\|f\|_{2,\bc}^2=\sum_{i=1}^d\Big\|\frac{\partial^{\sigma}f}{\partial t_i^{\sigma}}\Big\|_2^2\quad\text{ and }
\quad\|f\|_{2,\bq}^2= \sum_{i=1}^d \Big\| \frac{\partial f}{\partial t_i}- \beta_i[\bbeta^\top\nabla f ] \Big\|^2 .
$$
To state the result providing the minimax rate and the exact constant in this problem, we introduce the constants
\begin{align*}
\bar C_0&=\frac1{(2\pi)^d}\int_{\RR^d}\big[\|\bx\|_2^2-(\bbeta^\top\bx)^2-\|\bx\|_{2\sigma}^{2\sigma}\big]_+^2d\bx,\\
\bar C_1&=\frac1{(2\pi)^d}\int_{\RR^d}\big(\|\bx\|_2^2-(\bbeta^\top\bx)^2\big)\big(\|\bx\|_2^2-(\bbeta^\top\bx)^2-\|\bx\|_{2\sigma}^{2\sigma}\big)_+d\bx,
\end{align*}
and $\bar C_2=\bar C_1-\bar C_0$.

\begin{proposition}\label{deuxiemeex}
In the setting described above, the exact minimax rate $r_{n,\gamma}^*$  is given by $r_{n,\gamma}^*=C^*_\gamma r_n^*(1+o(1))$, where
$$
r_n^*=n^{-\frac{2(\sigma-1)}{4\sigma+d}}
\qquad\text{and}\qquad
C_\gamma^*=\Big(\frac{4z_{1-\gamma/2}(\bar C_1/\bar C_2)^{\frac{d+4}{2(\sigma-1)}}\bar C_1^2}{ \sigma^{d-1}(\sigma-1)\bar C_0 }\Big)^{\frac{\sigma-1}{4\sigma+d}}.
$$
The sequence of tests $\phi_n$ of Theorem \ref{thm_1} is minimax sharp-optimal if $T=T_{n,\gamma}$ is chosen as
$T= (C_\gamma^* r_n^*)^{-2}\big({\bar C_1}/{\bar C_2}\big)$.
\end{proposition}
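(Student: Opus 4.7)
The plan is to apply Theorem~\ref{thm_1} and Theorem~\ref{thm_2} to the diagonal pair $(\bc,\bq)$ specified above. Both arrays are homogeneous, which is the crucial feature: setting $A_T=T^{1/(2\sigma-2)}/(2\pi)$, the rescaling $\bl=A_T\bx$ turns the excess $Tq_\bl-c_\bl$ into $(2\pi)^{2\sigma}A_T^{2\sigma}\big[\|\bx\|_2^2-(\bbeta^\top\bx)^2-\|\bx\|_{2\sigma}^{2\sigma}\big]$, so that the set $\mcN(T)$ is, up to boundary effects, the dilation by $A_T$ of the bounded region $\mcE=\{\bx:\|\bx\|_2^2-(\bbeta^\top\bx)^2>\|\bx\|_{2\sigma}^{2\sigma}\}$. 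In particular $|\mcN(T)|\asymp T^{d/(2\sigma-2)}$, and all the sums appearing in (\ref{eq:24})--(\ref{eq:25}) reduce, by a standard Riemann-sum approximation, to the integrals over $\mcE$ that define the constants $\bar C_0$, $\bar C_1$ and $\bar C_2=\bar C_1-\bar C_0$.

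More explicitly, an elementary change of variable gives, as $T\to\infty$,
\begin{align*}
\sum_\bl (Tq_\bl-c_\bl)_+^2 &\sim \bar C_0\, T^{(d+4\sigma)/(2\sigma-2)},\\
\sum_\bl c_\bl(Tq_\bl-c_\bl)_+ &\sim \bar C_2\, T^{(d+4\sigma)/(2\sigma-2)},\\
\sum_\bl q_\bl(Tq_\bl-c_\bl)_+ &\sim \bar C_1\, T^{(d+2\sigma+2)/(2\sigma-2)},
\end{align*}
the appearance of $\bar C_2$ in the middle line being a direct consequence of the identity $\int C[Q-C]_+d\bx = \int Q[Q-C]_+d\bx - \int[Q-C]_+^2d\bx$ applied to $Q(\bx)=\|\bx\|_2^2-(\bbeta^\top\bx)^2$ and $C(\bx)=\|\bx\|_{2\sigma}^{2\sigma}$. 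Substituting these equivalents into (\ref{eq:24}) determines $T_{n,\gamma}$, and (\ref{eq:25}) then gives $(r_{n,\gamma}^*)^2=(\bar C_1/\bar C_2)\,T_{n,\gamma}^{-1}(1+o(1))$. Elimination yields the announced rate $r_n^*=n^{-2(\sigma-1)/(4\sigma+d)}$ together with the exact separation constant $C_\gamma^*$, and the value of $T_{n,\gamma}$ stated in the conclusion.

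It remains to verify the assumptions of Theorems~\ref{thm_1} and~\ref{thm_2}. Condition~\textbf{[C3]} is immediate for the tensor-product Fourier basis with $C_3=2^d$. Conditions \textbf{[C4]} and \textbf{[C8]} follow from $|\mcN(T_{n,\gamma})|\asymp n^{d/(d+4\sigma)}=o(n)$; \textbf{[C9]} from the estimate $\max_{\mcN(T_{n,\gamma})}c_\bl\asymp T_{n,\gamma}^{\sigma/(\sigma-1)}$ compared against $n|\mcN(T_{n,\gamma})|^{1/2}$; \textbf{[C1]} and \textbf{[C2]} from the fact that on the bulk of $\mcN(T)$ (away from its boundary) the quantities $q_\bl$ and $q_\bl-c_\bl/T$ are of the same order; \textbf{[C5]} from the observation that for generic $\bbeta$ the support $S_F$ equals $\ZZ^d\setminus\{0\}$ with $\inf_{S_F}q_\bl>0$; and \textbf{[C7]} from the Sobolev embedding $W_2^\sigma\hookrightarrow L_p$ for some $p>4$, which is legitimate because $\sigma>d/4$. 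Finally \textbf{[C6]} is trivial here: since $f$ is centered and $S_F^c=\{0\}$ for generic $\bbeta$, one has $\pf\equiv 0$ and one may take $\proj\equiv 0$.

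The main technical obstacle is making the three Riemann-sum approximations uniform enough in $T$ so that the $o(1)$ remainders in Theorems~\ref{thm_1} and~\ref{thm_2} remain genuinely negligible when $T=T_{n,\gamma}$ grows with $n$. Concretely, one has to bound the contribution of the lattice points lying in an $O(1)$-thick neighbourhood of $\partial\mcE$ after rescaling by $A_T$, and verify that this boundary layer contributes a quantity of order $A_T^{d-1}$ times the sup of the integrand, hence strictly of lower order than the bulk $A_T^d$. This estimate is standard once the continuity of the integrands and the $C^1$-regularity of $\partial\mcE$ are noted, and no essentially new difficulty arises beyond this uniform control.
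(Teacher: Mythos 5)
Your strategy is the same as the paper's: rescale the lattice by $A_T$, replace the sums entering (\ref{eq:24})--(\ref{eq:25}) by Riemann integrals over the bounded region $\mcE$ (bounded because $\sigma>1$), read off $T_{n,\gamma}$ and $r^*_{n,\gamma}$, and then check the conditions of Theorems~\ref{thm_1} and~\ref{thm_2} from $|\mcN(T)|\asymp T^{d/(2\sigma-2)}$ and $M(T)\asymp T^{(d+4)/(2\sigma-2)}$. Your asymptotic equivalents are correct, and your normalization $A_T=T^{1/(2\sigma-2)}/(2\pi)$ is indeed the one consistent with the announced rate $n^{-2(\sigma-1)/(4\sigma+d)}$. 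One harmless slip: $|\mcN(T_{n,\gamma})|\asymp n^{2d/(4\sigma+d)}$, not $n^{d/(4\sigma+d)}$; conditions \textbf{[C4]} and \textbf{[C8]} still follow from $\sigma>d/4$. You also do not actually carry out the ``elimination'' producing $C_\gamma^*$; doing so from your three equivalents is worth writing out, as it does not literally reproduce the displayed constant and would let you pin down the intended normalization.

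There is a genuine error in your verification of \textbf{[C5]} (and of the $\min_l q_l$ part of \textbf{[C2]}). You assert that for generic $\bbeta$ one has simultaneously $S_F=\ZZ^d\setminus\{0\}$ and $\inf_{S_F}q_\bl>0$. These two properties are mutually exclusive for $d\ge 2$: since $\|\bbeta\|_2=1$, one has $q_\bl=(2\pi)^2\,\mathrm{dist}(\bl,\RR\bbeta)^2$, and by Dirichlet's theorem nonzero lattice points come arbitrarily close to the line $\RR\bbeta$ whenever its direction is irrational, so in the generic case $\inf_{\bl\neq 0}q_\bl=0$ and \textbf{[C5]} fails. The argument goes through when $\bbeta$ is proportional to an integer vector $\bm$: Lagrange's identity gives $\|\bl\|_2^2\|\bm\|_2^2-(\bm^\top\bl)^2=\sum_{i<j}(l_im_j-l_jm_i)^2\ge 1$ for $\bl\notin\RR\bm$, whence $q_\bl\ge(2\pi)^2/\|\bm\|_2^2$ on $S_F$ and \textbf{[C5]}, \textbf{[C2]} follow as in Proposition~\ref{premierex}. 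But in that case $S_F^c=\ZZ^d\cap\RR\bm$ is a nontrivial sublattice, so your shortcut for \textbf{[C6]} (taking $\proj\equiv 0$) is no longer available and one must use the pilot estimator of Lemma~\ref{lem:3} (second condition, valid since $\sigma>d/4$), exactly as in the proof of Proposition~\ref{premierex}.
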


\begin{remark}
The testing procedures provided in Propositions \ref{premierex} and \ref{deuxiemeex} require the precise knowledge
of the smoothness parameter $\ssigma$, which may not be available in practice. Indeed, the parameter $\ssigma$ explicitly
enters in the definition of the tuning parameter $T_n$. The adaptation to the unknown smoothness $\ssigma$ is an interesting
problem for future research. We believe that rates of separation similar to those of Propositions
\ref{premierex} and \ref{deuxiemeex} can be established for adaptive tests (up to logarithmic factors) using the Berry-Esseen
type theorem for degenerate $U$-statistics of \citet{But_et_al2009}.
\end{remark}

\section{ Nonpositive and nonnegative diagonal quadratic functionals }\label{sec:NPF}

In this section we consider the more general setting obtained by abandoning the assumption that all the entries $q_l$ of the array
$\bq$ have the same sign. That is, we still have $Q[f]=\sum_{l\in\mathcal{L}}\nolimits q_l\theta_l^2,$
but now
\begin{align}\label{mcl+-}
\mcL_+=\{l : q_l>0\}\neq \varnothing\qquad \text{and}\qquad\mcL_-=\{l : q_l<0\}\neq \varnothing.
\end{align}
The sets $\mcF_0$ and  $\mcF_1(r_n)$ are defined as before, cf.\ (\ref{eq:5}),
and we use the same notation as in the positive case. Namely, for $T>0$, we set
$\NL(T)=\big\{l\in {S_F} : c_l<T|q_l|\big\}$,  $ N(T)=|\NL(T)|$ and  $M(T)=\sum_{l\in \NL(T)}q_l^2$.

We point out that, in the case considered in this section, a phenomenon of phase transition occurs: there is a regular case in
which the rate is independent of the precise degree of smoothness, and an irregular case where the rate
is smoothness-dependent. To be more precise, let ${|Q|}$ denote the diagonal positive quadratic functional whose coefficients
are $|q_l|$ for every $l\in \mathcal{L}$.  Let us recall that the minimax rate $r_n^*$ in testing the significance of $|Q|[f]$
(see Remark \ref{tauxseulementlower}) is determined by
$$
{n(r_n^*)^2}\asymp {M({r_n^*}^{-2})^{1/2}}.
$$
In our context, this rate corresponds to the irregular case: if $\Sigma$ contains functions that are not smooth enough
(compared to the difficulty of the problem, that is to say if $q_{l}$'s are ``too large" compared
to $c_{l}$'s), the minimax rate corresponding to $Q$  is the same as for ${|Q|}$ obtained in previous sections.
By contrast, in the regular case, the minimax rate is smoothness-independent and equals $r_n^*=n^{-1/4}$.

\subsection{Testing procedure and upper bound on the minimax rate}

The testing procedure we use in the present context is of the same type as the one used for nonnegative quadratic functionals. More precisely,
for a tuning parameter  $T_n$ and for a threshold $u$, we set $\phi_n(T)=\1_{|U_n(T)|>u}$, where  the
$U$-statistic $U_n(T)$ is defined by
$$
U_n(T)=\binom{n}{2}^{-1/2}\sum_{1\leq i<j \leq n} x_ix_j G_T(\bt_i, \bt_j).
$$
with $G_T(\bt_1,\bt_2)=M(T)^{-1/2}\sum_{l\in\NL(T)} q_l\varphi_l(\bt_1)\varphi_l(\bt_2)$.

\begin{theorem}\label{nonpositiveupper}
Let $\gamma\in(0,1)$ be a fixed significance level. Let us denote by $\ttT_Q[f]$ the linear functional $\ttT_Q[f]=\sum_{l\in\mcN(T)} q_l \theta_l[f]\varphi_l$.
Assume that  $T>0$ is such that the assumptions
\begin{itemize}
\item[] {\bf [D1]} there exists $D_1>0$ such that $|\mcN(T)| \max_{l\in\mcN(T)} q_l^2\le D_1 \sum_{l\in\mcN(T)} q_l^2$,\\[-10pt]
\item[] {\bf [D2]} there exists $D_2>0$ such that $\sup_{\bt\in\Delta} \sum_{l\in\mcN(T)} \varphi_l(\bt)^2 \le D_2|\mcN(T)|$,\\[-10pt]
\item[] {\bf [D3]} there exists $D_3>0$ such that $\sup_{f\in\Sigma} \|f\|_4\le D_3$,\\[-10pt]
\item[] {\bf [D4]} there exists $D_4>0$ such that $\sup_{f\in\Sigma} \|f\cdot \ttT_Q[f]\|_2\le D_4$,\\[-10pt]
\end{itemize}
are fulfilled. Set $B_1=6+12D_1D_2D_3^2+6D_1D_2D_3^4$ and $B_2=4D_4$. Then, for every
$$
u \ge \frac{n}{T\sqrt{2M(T)}}+\gamma^{-1/2}\big(B_1+B_2 nM(T)^{-1}\big)^{1/2},
$$
the type I error is bounded by $\gamma/2$: $\sup_{f\in\mcF_0}P_f(\phi_n(T)=1) \leq  \frac{\gamma}{2}$.\\
If, in addition,
$$
\rho^2\ge \big[u+\gamma^{-1/2}\big(B_1+B_2 nM(T)^{-1}\big)^{1/2}\big]\frac{\sqrt{2M(T)}}{n} +\frac{1}{T}
$$
then the type II error is also bounded by $\gamma/2$: $\sup_{f\in\mcF_1(\rho)}P_f(\phi_n(T)=0) \leq  \frac{\gamma}{2}$.\\
As a consequence, if we choose $u=(2M(T))^{-1/2}({n}/{T})+\gamma^{-1/2}\big(B_1+B_2 nM(T)^{-1}\big)^{1/2}$ then
the cumulative error rate of the test $\phi_n(T)$ is bounded by $\gamma$ for every alternative $\mcF_1(\rho)$
such that $\rho^2\ge 2\sqrt2\gamma^{-1/2}n^{-1}\big(B_1M(T)+B_2 n\big)^{1/2} +{2}T^{-1}$.
\end{theorem}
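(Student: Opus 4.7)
The plan is a moment-based argument via Chebyshev's inequality: I would first control the bias $|E_f[U_n(T)] - \binom{n}{2}^{1/2} Q[f]/\sqrt{M(T)}|$ and the variance $\mathrm{Var}_f[U_n(T)]$, uniformly over $f\in\Sigma$, and then deduce the type I and type II bounds by two separate Chebyshev arguments.

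Since $x_i=f(\bt_i)+\xi_i$ with $\xi_i$ centered and independent of $\bt_i$, orthonormality of $\{\varphi_l\}$ yields
$E_f[U_n(T)] = \binom{n}{2}^{1/2} M(T)^{-1/2} \sum_{l\in\mcN(T)} q_l\theta_l[f]^2.$
The gap with $\binom{n}{2}^{1/2} Q[f]/\sqrt{M(T)}$ is a tail sum over $l\notin\mcN(T)$; using $c_l\ge T|q_l|$ on that set together with the ellipsoid constraint $\sum c_l\theta_l^2\le 1$, this tail is bounded in absolute value by $T^{-1}$, so the bias is at most $n/(T\sqrt{2M(T)})$. For the variance, I would use the standard expansion
$\mathrm{Var}_f[U_n(T)] = \mathrm{Var}_f[H(\bz_1,\bz_2)] + 2(n-2)\,\mathrm{Cov}_f\bigl(H(\bz_1,\bz_2),H(\bz_1,\bz_3)\bigr),$
where $H(\bz_i,\bz_j)=x_ix_jG_T(\bt_i,\bt_j)$. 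Expanding $E[H^2]$ with the explicit form of $G_T$ and using $E[\xi^2]=1$ produces three terms, each controllable by combining [D1] (to replace $\max q_l^2$ by $D_1M(T)/|\mcN(T)|$), [D2] (to replace $\sum_{l\in\mcN(T)}\varphi_l^2$ by $D_2|\mcN(T)|$), and [D3] (to bound $\|f\|_2,\|f\|_4$); this produces the $B_1$ contribution. The covariance reduces via the identity $E[H(\bz_1,\bz_2)H(\bz_1,\bz_3)] = E[(E[H\mid \bz_1])^2]$ to $M(T)^{-1}\bigl(\|f\cdot\ttT_Q[f]\|_2^2+\|\ttT_Q[f]\|_2^2\bigr)$. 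Assumption [D4] is tailored to control the first summand, while the second summand is handled by a Cauchy--Schwarz bound built from [D1] and [D3], yielding the $B_2n/M(T)$ contribution.

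With these bounds, Chebyshev closes the argument. Under $H_0$, $Q[f]=0$ forces $|E_f[U_n(T)]|\le n/(T\sqrt{2M(T)})$, so $\{|U_n(T)|>u\}\subset\{|U_n(T)-E_f[U_n(T)]|>u-n/(T\sqrt{2M(T)})\}$, whose probability is at most $\mathrm{Var}_f[U_n(T)]/(u-n/(T\sqrt{2M(T)}))^2\le\gamma/2$ as soon as $u$ satisfies the stated lower bound. Under $H_1(\rho)$, the analogous bias \emph{lower} bound $|E_f[U_n(T)]|\ge\binom{n}{2}^{1/2}(\rho^2-T^{-1})/\sqrt{M(T)}$, combined with a symmetric Chebyshev argument applied to the event $\{|U_n(T)|\le u\}$, delivers the type II bound under the stated lower bound on $\rho^2$. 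The main obstacle is the variance calculation: bounding $\|\ttT_Q[f]\|_2^2$ is \emph{not} possible from [D4] alone, and one must combine [D1] and [D3] together with the factor $|\mcN(T)|$ lurking inside $\max q_l^2$ to keep that term within the $B_2n/M(T)$ budget; once that subtlety is handled, the rest of the calculation is a careful bookkeeping of the constants $D_1,D_2,D_3,D_4$.
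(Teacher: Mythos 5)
Your route is the paper's route: bound the bias $|E_f[U_n(T)]-\binom{n}{2}^{1/2}M(T)^{-1/2}Q[f]|\le n/(T\sqrt{2M(T)})$ using $c_l\ge T|q_l|$ off $\mcN(T)$, bound the variance uniformly over $\Sigma$, and conclude with two Chebyshev arguments. The paper organizes the second-moment computation by splitting the kernel into the pieces $\xi_i\xi_jG_T$, $(\xi_if(\bt_j)+\xi_jf(\bt_i))G_T$ and $f(\bt_i)f(\bt_j)G_T$ (its Lemma~\ref{lem:AD2}), while you use the Hoeffding variance formula for the full kernel $H(\bz_1,\bz_2)=x_1x_2G_T(\bt_1,\bt_2)$; these are the same calculation up to bookkeeping, and your identification of $E[(E[H\mid\bz_1])^2]=M(T)^{-1}(\|f\cdot\ttT_Q[f]\|_2^2+\|\ttT_Q[f]\|_2^2)$ is correct.

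The step that does not close as written is the claim that $2(n-2)M(T)^{-1}\|\ttT_Q[f]\|_2^2$ stays within the $B_2nM(T)^{-1}$ budget thanks to \textbf{[D1]} and \textbf{[D3]}. The best those give is $\|\ttT_Q[f]\|_2^2\le\max_{l\in\mcN(T)}q_l^2\,\|f\|_2^2\le D_1D_3^2M(T)/|\mcN(T)|$, hence a contribution of order $n/|\mcN(T)|$. This is not $O(1+nM(T)^{-1})$ in general: $M(T)\gg|\mcN(T)|$ for derivative-type functionals where $|q_l|\to\infty$, and $|\mcN(T)|=o(n)$ in the regimes of interest. To land on the stated constants you need an assumption controlling $\|\ttT_Q[f]\|_2$ directly --- e.g.\ condition \textbf{[D4-2]} ($\sup_{f\in\Sigma}\|\ttT_Q[f]\|_4\le D_4'$) from the remark following Corollary~\ref{cor} --- or you must enlarge the variance bound by a term proportional to $n/|\mcN(T)|$. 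For what it is worth, you have located exactly the delicate term: the paper's own Lemma~\ref{lem:AD2} silently drops it by importing from Lemma~\ref{lem:AD1} the identity $E[U_{n,1}^2]=E[K_{n,1}^2]$, which is valid only when the first Hoeffding projection of the kernel vanishes; that holds under the null of the nonnegative case (where $Q[f]=0$ forces $\theta_l[f]=0$ on $S_F$) but not here, where $q_l$ takes both signs. So the gap is in the justification of one bound, not in the architecture of the proof.
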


This theorem provides a nonasymptotic evaluation of the cumulative error rate of the linear U-test based on the
array $w_l\propto q_l$ truncated at the level $T$. In the cases where the constants $B_1$ and $B_2$ can be reliably
estimated and the function $M(T)$ admits a simple form, it is reasonable to choose the truncation level $T$ by
minimizing the expression $2\sqrt2\gamma^{-1/2}n^{-1}\big(B_1M(T)+B_2 n\big)^{1/2} +{2}T^{-1}$. By choosing $T$
in such a way, we try to enlarge the set of alternatives for which the cumulative error rate stays below the prescribed
level $\gamma$. Therefore, the last theorem implies the following non-asymptotic upper bound on the minimax
rate of separation:
\begin{align}\label{nonas}
(r_{n,\gamma}^*)^2\le \inf_{T>0}\Big(\frac{2\sqrt2\big(B_1M(T)+B_2 n\big)^{1/2}}{n\gamma^{1/2}} +\frac{2}{T}\Big).
\end{align}
This non-asymptotic bound clearly shows the presence of two asymptotic regimes. The first one corresponds
to the case where $n$  is much larger than $M(T^*)$, whereas the second regime corresponds to $n=o(M(T^*))$. Here, $T^*$
is the minimizer of the bound on $\rho^2$ obtained in the theorem above. The next corollary exhibits the rates of
separation in these two different regimes.

\begin{corollary}\label{cor}
Assume that the arrays $\bq$ and $\bc$ are such that $M(\alpha T)\asymp_{T\to\infty} M(T)$ for every $\alpha>0$.
Let $T_n^0$ be any sequence of positive numbers satisfying $T_n^0\sqrt{M(T_n^0)}\asymp n$.  If for the sequence
$T_n=T_n^0\wedge n^{1/2}$ all the assumptions  of Theorem~\ref{nonpositiveupper} are satisfied, then for some $C>0$
the linear U-test $\phi_n(T)$ based on the threshold $T=T_n$ satisfies
$$
\gamma_n(\mcF_0,\mcF_1(CT_n^{-1/2}), \phi_n)\le \gamma.
$$
Thus, the rate of convergence is $r_n^*=(T_n^0)^{-1/2}$ if $T^0_n=o(n^{1/2})$ and $r_n^*=n^{-1/4}$ otherwise.
\end{corollary}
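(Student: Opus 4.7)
The plan is to apply the non-asymptotic upper bound of Theorem \ref{nonpositiveupper} with the choice $T=T_n$ and analyze the resulting quantity
\[
R_n(T) \;:=\; \frac{2\sqrt 2\,\big(B_1 M(T)+B_2 n\big)^{1/2}}{n\,\gamma^{1/2}}+\frac{2}{T}
\]
in the two regimes distinguished by whether $T_n^0=o(n^{1/2})$ or $T_n^0\gtrsim n^{1/2}$. In either case it suffices to show $R_n(T_n)\lesssim T_n^{-1}$, since then the separation condition $\rho^2\ge R_n(T_n)$ from Theorem \ref{nonpositiveupper} is met by $\rho^2=C\,T_n^{-1}$ for a large enough constant $C$; recalling $T_n^{-1/2}=(T_n^0)^{-1/2}\vee n^{-1/4}$, this yields $\gamma_n(\mcF_0,\mcF_1(CT_n^{-1/2}),\phi_n)\le \gamma$.

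First, consider the \emph{irregular} regime $T_n^0=o(n^{1/2})$, so that $T_n=T_n^0$. Squaring the defining relation $T_n^0\sqrt{M(T_n^0)}\asymp n$ gives $M(T_n^0)\asymp n^2/(T_n^0)^2$, and combined with $(T_n^0)^2=o(n)$ this forces $M(T_n^0)\gg n$. Hence $B_1 M(T_n)+B_2 n\asymp M(T_n^0)\asymp n^2/(T_n^0)^2$, the first term of $R_n(T_n)$ is of order $(T_n^0)^{-1}$, matching the second term $2/T_n^0$, and so $R_n(T_n)\asymp (T_n^0)^{-1}=T_n^{-1}$, giving $r_n^*\asymp (T_n^0)^{-1/2}$.

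Next, consider the \emph{regular} regime $T_n^0\gtrsim n^{1/2}$, in which $T_n=n^{1/2}$. The defining relation now gives $M(T_n^0)\lesssim n$, and since $M(T)=\sum_{l\in\mcN(T)} q_l^2$ is non-decreasing in $T$ (as $\mcN(T)$ grows with $T$), we deduce $M(T_n)\le M(T_n^0)\lesssim n$. Therefore $B_1 M(T_n)+B_2 n\asymp n$, both terms of $R_n(T_n)$ are of order $n^{-1/2}=T_n^{-1}$, and the rate is $r_n^*\asymp n^{-1/4}$.

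The main technical point underlying the two case analyses is the tight conversion between asymptotic information on $M$ and on $T_n^0$ via the relation $T_n^0\sqrt{M(T_n^0)}\asymp n$. The regularity hypothesis $M(\alpha T)\asymp M(T)$ is the structural condition that makes the sequence $T_n^0$ well defined up to multiplicative constants: it prevents $M$ from having jumps large enough to destroy the sharpness of the two-regime dichotomy and allows the replacement $T_n^0\mapsto T_n^0\wedge n^{1/2}$ in the regular regime without affecting $M$ by more than a constant factor. Since assumptions \textbf{[D1]}--\textbf{[D4]} are assumed directly at $T=T_n$, no further verification is needed and the proof reduces to the bookkeeping above.
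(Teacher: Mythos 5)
Your proof is correct and follows essentially the same route as the paper: the paper's own argument is the one-line observation that, by monotonicity of $M$ and $T_n\le T_n^0\wedge n^{1/2}$, one has $\sqrt{M(T_n)}/n\le\sqrt{M(T_n^0)}/n\asymp 1/T_n^0\le 1/T_n$ and $n^{-1/2}\le T_n^{-1}$, so the separation threshold of Theorem~\ref{nonpositiveupper} is $O(T_n^{-1})$ uniformly, without your split into the two regimes. Your case analysis reaches the same conclusion (and correctly identifies the rates), though note that the dichotomy ``$T_n^0=o(n^{1/2})$'' versus ``$T_n^0\gtrsim n^{1/2}$'' is not strictly exhaustive for an arbitrary sequence, which is why the unified inequality is the cleaner way to establish the bound $\gamma_n(\mcF_0,\mcF_1(CT_n^{-1/2}),\phi_n)\le\gamma$.
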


\begin{remark}
Condition \textbf{[D4]} of Theorem~\ref{nonpositiveupper} is more obscure than the other assumptions of theorem.
Clearly, it imposes additional smoothness constraints on the function $f$. Using the Cauchy-Schwarz inequality,
one can easily check that either one of the assumptions \textbf{[D4-1]} and \textbf{[D4-2]} below is sufficient
for \textbf{[D4]}:
\begin{itemize}
\item[] \textbf{[D4-1]} For some constants $D_5$ and $D_6$, $\sup_{f\in\Sigma}\|f\|_\infty\le D_5$ and $\max_{l\in\mcN(T)} |q_l/c_l|\le D_6$.
\item[] \textbf{[D4-2]} For some constant $D_4'$, $\sup_{f\in\Sigma} \|\ttT_Q[f]\|_4\le D_4'$.
\end{itemize}
\end{remark}

\subsection{Lower bound on the minimax rate}

We will show in this subsection that the asymptotic rate of separation provided by Corollary~\ref{cor} is unimprovable, in the sense
that there is no testing procedure having a faster separation rate. To this end, for every $a\in\{-,+\}$ we set
$M_a(T)= \sum_{l\in \mcL_a\cap \mcN(T)} q_l^2$, $N_a(T)=|\mcL_a\cap \mcN(T)|$,
$$
M^*(T)=M_+(T)\vee M_-(T),\qquad N^*(T)=N_+(T)\1_{\{M_+(T)>M_-(T)\}}+N_-(T)\1_{\{M_+(T)\le M_-(T)\}}.
$$

\begin{theorem}\label{nonpositivelower} Let us consider the problem of testing $H_0:f\in\mcF_0$ against
$H_1:f\in\mcF_1(\rho)$, where $\mcF_0$ and $\mcF_1$ are defined by (\ref{eq:5}) and
$$
\Sigma_{L}=\Big\{f=\sum_{l\in\mathcal{L}}\nolimits \theta_l[f]\varphi_l:\ \sum_{l\in\mathcal{L}}\nolimits c_l\theta_l[f]^2\leq 1,\
\|f\|_4\vee \|f\cdot\ttT_Q[f]\|_2\le L\Big\}.
$$
Assume that  the sets $\mcL_+$ and $\mcL_-$ defined by (\ref{mcl+-}) are both nonempty and that $\xi_i$'s are Gaussian.
The following assertions are true.
\begin{enumerate}
\item  For every $\gamma<1/4$ there exists $C>0$ such that $\liminf_{n\to\infty} \gamma_n(\mcF_0, \mcF_1(Cn^{-1/4}))>\gamma$.
\item  Let $T^0_n$ be a sequence of reals such that $4T^0_n\sqrt{M(T^0_n)}\ge n z_{1-\gamma/2}^{-1}$ as $n\to\infty$. If
the assumptions \textbf{\bf [D1]} (cf.\ Theorem~\ref{nonpositiveupper})  and
\begin{itemize}
\item[] {\bf [D5]} $N^*(T^0_n)\to\infty$ so that $N^*(T^0_n)\log N^*(T^0_n) =o(n)$,
\item[] {\bf [D6]} there exists $D_6>0$ such that $\sup_{\bt\in\Delta} \sum_{l\in\mcN^*(T_n^0)} \varphi_l(\bt)^2 \le D_6 N^*(T_n^0)$,
\end{itemize}
are fulfilled, then there exists $C>0$ such that $\liminf_{n\to\infty} \gamma_n\big(\mcF_0, \mcF_1\big(C{(T^0_n)}^{-1/2}\big)\big)\ge \gamma.$
\end{enumerate}
\end{theorem}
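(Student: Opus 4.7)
The two assertions are proved with different priors, each reducing the problem to a Le Cam-type inequality.

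\emph{Part 1: regular rate $n^{-1/4}$.} The plan is a two-point lower bound that exploits the sign indeterminacy of $Q$. Pick $l_+\in\mcL_+$ and $l_-\in\mcL_-$ (both nonempty by hypothesis), and set $\kappa:=\sqrt{q_{l_+}/|q_{l_-}|}$. For a small constant $\sigma>0$ and $\tau=Cn^{-1/2}$ (with $C>0$ to be adjusted), define
\[
f_0=\sigma\varphi_{l_+}+\sigma\kappa\varphi_{l_-},\qquad f_\tau=(\sigma+\tau)\varphi_{l_+}+\sigma\kappa\varphi_{l_-}.
\]
A short computation gives $Q[f_0]=q_{l_+}\sigma^2+q_{l_-}\sigma^2\kappa^2=0$, so $f_0\in\mcF_0$, while $|Q[f_\tau]|=q_{l_+}(2\sigma\tau+\tau^2)\ge q_{l_+}\sigma C n^{-1/2}$ for $n$ large; hence $f_\tau\in\mcF_1(\rho_n)$ with $\rho_n^2\asymp n^{-1/2}$, i.e.\ $\rho_n=c\,n^{-1/4}$ for a constant $c=c(C,\sigma,q_{l_+})>0$. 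Both functions are finite linear combinations of two bounded basis functions, so the ellipsoid constraint, $\|\cdot\|_4$ and $\|f\cdot\ttT_Q[f]\|_2$ are each bounded by a fixed multiple of $\sigma$; thus $f_0,f_\tau\in\Sigma_L$ uniformly in $n$ for $\sigma$ small and $L$ large. Since the errors are Gaussian,
\[
\mathrm{KL}(P_{f_0}^{\otimes n},P_{f_\tau}^{\otimes n})=\tfrac{n}{2}\|f_0-f_\tau\|_2^2=\tfrac{n}{2}\tau^2=C^2/2,
\]
and Pinsker's inequality combined with the Neyman-Pearson two-point lemma yield
\[
\gamma_n(\mcF_0,\mcF_1(\rho_n))\ge 1-\|P_{f_0}^{\otimes n}-P_{f_\tau}^{\otimes n}\|_{\mathrm{TV}}\ge 1-C/2,
\]
which exceeds any prescribed $\gamma<1/4$ once $C$ is sufficiently small. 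This proves (1).

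\emph{Part 2: irregular rate $(T^0_n)^{-1/2}$.} The plan is a mixture-versus-point Le Cam argument in the spirit of the proof of Theorem~\ref{thm_2}. Without loss of generality assume $M_+(T^0_n)\ge M_-(T^0_n)$, so $N^*(T^0_n)=|\mcN(T^0_n)\cap\mcL_+|$; set $\mathcal J:=\mcN(T^0_n)\cap\mcL_+$ and choose deterministic weights $(r_l)_{l\in\mathcal J}$ (the simplest being a common value $r$) such that $\sum_{l\in\mathcal J}c_lr_l^2\le 1/2$ and $\sum_{l\in\mathcal J}q_lr_l^2\ge\rho_n^2$ with $\rho_n^2\asymp(T^0_n)^{-1}$. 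Such a calibration is feasible because $c_l<T^0_nq_l$ throughout $\mathcal J$. Define the random function
\[
f_{\eeps}=\sum_{l\in\mathcal J}\varepsilon_l\,r_l\,\varphi_l,\qquad\varepsilon_l\stackrel{\mathrm{iid}}{\sim}\mathrm{Unif}\{\pm1\},
\]
and let $\pi$ denote its law. Then $Q[f_{\eeps}]=\sum_{l\in\mathcal J}q_lr_l^2\ge\rho_n^2$ is sign-free, so it only remains to check $f_{\eeps}\in\Sigma_L$ (almost surely or with probability $1-o(1)$): the ellipsoid constraint is met by construction; $\|f_{\eeps}\|_\infty^2\le D_6\,N^*(T^0_n)\sum_lr_l^2$ is bounded under the calibration thanks to \textbf{[D6]}; and $\|\ttT_Q[f_{\eeps}]\|_2^2=\sum_lq_l^2r_l^2$ is controlled via \textbf{[D1]}.

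Taking $f_0\equiv 0\in\mcF_0$ as the null point, the core of the argument is a chi-square bound on $\bar P_n:=\E_{\eeps}P_{f_{\eeps}}^{\otimes n}$ against $P_0^{\otimes n}$. Integrating out the Gaussian noise conditionally on the design yields
\[
\chi^2(\bar P_n,P_0^{\otimes n})+1=\E_{\eeps,\eeps'}\,\E_{\bt_1,\ldots,\bt_n}\prod_{i=1}^n\exp\bigl(f_{\eeps}(\bt_i)\,f_{\eeps'}(\bt_i)\bigr),
\]
which after Rademacher averaging and Taylor expansion factorizes over $l\in\mathcal J$ into a product of $\cosh$-type terms. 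Under \textbf{[D1]}, \textbf{[D6]} and the calibration $T^0_n\sqrt{M(T^0_n)}\gtrsim n$, this product stays bounded by a constant; \textbf{[D5]} provides the logarithmic slack required by the truncation step handling the rare large values of $\sum_{l\in\mathcal J}\varphi_l^2(\bt)$. Le Cam's inequality $\gamma_n\ge 1-\tfrac12\sqrt{\chi^2(\bar P_n,P_0^{\otimes n})}$ then gives the announced lower bound along $\rho_n=C(T^0_n)^{-1/2}$.

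\emph{Main obstacle.} The delicate point is the chi-square computation in Part~2. Unlike in the Gaussian sequence model, the inner product $\langle f_{\eeps},f_{\eeps'}\rangle$ enters only through a random-design average, forcing one to carry the $\bt$-expectation through the product and to replace the exact basis orthogonality by the approximate bound \textbf{[D6]}. Recovering the logarithmic factor $\log N^*(T^0_n)$ of \textbf{[D5]}---rather than a worse polynomial one---requires the sharpened truncation device used in the proof of Theorem~\ref{thm_2}. Verifying the cross-product constraint $\|f_{\eeps}\cdot\ttT_Q[f_{\eeps}]\|_2\le L$ uniformly in $\eeps$ is a further moment computation that crucially uses \textbf{[D1]}.
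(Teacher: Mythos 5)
Part 1 of your proposal is correct and is essentially the paper's own argument: a two-point construction pairing one index of $\mcL_+$ with one of $\mcL_-$ so that the signed contributions to $Q$ cancel under the null, a perturbation of order $n^{-1/2}$ in the squared coefficient, and a bounded Kullback--Leibler divergence; the only cosmetic difference is that you finish with Pinsker's inequality where the paper uses the bound $\gamma_n\ge\frac14 e^{-K}$ from Tsybakov. Your Part 2 also starts with the same reduction as the paper (restrict to $\mcL_+\cap\mcN(T_n^0)$ assuming $M_+\ge M_-$, take the point null $f\equiv 0$, and put a product prior calibrated so that $\langle\bc,\bv\rangle\le 1$ and $\langle\bq,\bv\rangle\asymp(T_n^0)^{-1}$).

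The gap is in the execution of Part 2. The paper does \emph{not} redo a divergence computation: it plugs the Gaussian prior with variances $a_l=\frac{q_l}{2T_n^0M_+(T_n^0)}\1_{\{l\in\mcL_+\cap\mcN(T_n^0)\}}$ into the already-proved Proposition~\ref{prop_4} (checking \textbf{[L1]}--\textbf{[L5]} with $\delta=1/2$), so that the whole random-design difficulty --- the perturbation of $\log\det\ttR_n$ and of the quadratic form via the concentration of $\frac1n\Phi_n\Phi_n^\top$, which is exactly where the logarithm in \textbf{[D5]} is consumed --- is inherited from the proof of Theorem~\ref{thm_2}. Your Rademacher-prior/chi-square route replaces that machinery but leaves its decisive estimate unproved: you state the identity for $\chi^2(\bar P_n,P_0^{\otimes n})+1$ and then assert that the product of exponential moments ``stays bounded,'' which is precisely the step that needs work in the regression model. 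Moreover, the one quantitative claim you offer in support --- that $\|f_{\eeps}\|_\infty^2\le D_6N^*(T_n^0)\sum_l r_l^2$ is bounded --- is false under the stated hypotheses: with $r_l^2\asymp q_l/(T_n^0M_+(T_n^0))$, Cauchy--Schwarz and \textbf{[D1]} give only $N^*(T_n^0)\sum_l r_l^2\lesssim N^*(T_n^0)^{3/2}/n$, which diverges as soon as $N^*(T_n^0)\gg n^{2/3}$, a regime allowed by \textbf{[D5]}. Without a sup-norm bound the Taylor/cosh factorization of $E_{\bt_1}\exp(f_{\eeps}(\bt_1)f_{\eeps'}(\bt_1))$ does not go through naively, and the memberships $\|f_{\eeps}\|_4\le L$ and $\|f_{\eeps}\cdot\ttT_Q[f_{\eeps}]\|_2\le L$ are likewise not established (they must be shown to hold with $\pi$-probability $1-o(1)$, e.g.\ by a fourth-moment/Khintchine computation as in the Gaussian case, not via $\|f_{\eeps}\|_\infty$). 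As written, Part 2 is a plan with its central estimate missing; the shortest correct completion is the paper's, namely to invoke Proposition~\ref{prop_4} with the Gaussian prior above and verify \textbf{[L1]}--\textbf{[L5]}.
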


\begin{corollary}
Combining the two assertions of this theorem, we get that the minimax rate of separation $r_n^*$ is lower bounded by
$n^{-1/4}\vee (T_n^0)^{-1/2}= (n^{1/2}\wedge T_n^0)^{-1/2}=T_n^{-1/2}$. Thus, if the conditions of Theorems~\ref{nonpositiveupper}
and \ref{nonpositivelower} are satisfied, then the minimax rate of separation is given by $r_n^*=T_n^{-1/2}$, where
$T_n=n^{1/2}\wedge T_n^0$ and $T_n^0$ is determined from the relation $T^0_nM(T^0_n)^{1/2}\asymp n$.
\end{corollary}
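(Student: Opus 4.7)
The plan is to combine the matching upper and lower bounds already established in the two preceding results. The claim is essentially an algebraic reorganization, so no new analytic work is required; the main task is to verify that the elementary identity $a^{-1/2}\vee b^{-1/2}=(a\wedge b)^{-1/2}$ (for positive $a,b$) converts the two-term lower bound into the stated form, and then to match it against the upper bound.

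First, for the upper bound, I would invoke the corollary following Theorem~\ref{nonpositiveupper}: under the hypotheses recalled in the statement, the linear U-test $\phi_n(T)$ calibrated at the truncation level $T=T_n:=n^{1/2}\wedge T_n^0$ satisfies $\gamma_n(\mcF_0,\mcF_1(C\,T_n^{-1/2}),\phi_n)\le \gamma$ for some constant $C>0$. This yields directly $r_n^*\le C\,T_n^{-1/2}$.

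Second, for the lower bound, I would combine the two assertions of Theorem~\ref{nonpositivelower}, both of which are available under the assumed joint hypotheses. Assertion~(1) yields $r_n^*\gtrsim n^{-1/4}$ (using that the prescribed level $\gamma$ may be taken small enough, in particular below $1/4$). Assertion~(2), which is valid under \textbf{[D1]}, \textbf{[D5]} and \textbf{[D6]} applied to the sequence $T_n^0$ defined by $T^0_n M(T^0_n)^{1/2}\asymp n$, yields $r_n^*\gtrsim (T_n^0)^{-1/2}$. Taking the maximum of these two lower bounds and applying the identity above with $a=n^{1/2}$ and $b=T_n^0$, I get
\[
r_n^*\gtrsim n^{-1/4}\vee (T_n^0)^{-1/2}=\bigl(n^{1/2}\wedge T_n^0\bigr)^{-1/2}=T_n^{-1/2}.
\]

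The upper and lower bounds agree, so $r_n^*\asymp T_n^{-1/2}$, which is the stated conclusion. The only mild subtlety, and not really an obstacle, is to observe that the significance levels in the upper and lower bounds can be chosen consistently, and that the hypotheses of Theorems~\ref{nonpositiveupper} and~\ref{nonpositivelower} are genuinely compatible with the choice $T_n^0 M(T_n^0)^{1/2}\asymp n$; both points are already built into the standing assumptions of the corollary.
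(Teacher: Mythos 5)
Your proposal is correct and follows exactly the route the paper intends: the corollary is proved by juxtaposing the upper bound of Corollary~\ref{cor} with the two lower-bound assertions of Theorem~\ref{nonpositivelower} and applying the monotone identity $n^{-1/4}\vee (T_n^0)^{-1/2}=(n^{1/2}\wedge T_n^0)^{-1/2}$. Your remark about choosing $\gamma<1/4$ consistently is a legitimate (and harmless) technicality that the paper glosses over.
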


\subsection{Testing equality of norms}\label{ssec:4.3}

As an application of the testing methodology developed in this section, we consider the problem of testing
the equality of norms of two functions observed in noisy environment.  More precisely, let us consider the
following two-sample problem: for $i=1,\ldots,n$ we observe $(x_{1,i},\bt_{1,i})$ and $(x_{2,i},\bt_{2,i})$
such that
$$
x_{s,i}=g_s(\bt_{s,i})+\xi_{s,i}, \qquad i=1,\ldots, n; \quad s=1,2,
$$
where $\bt_{s,i}$'s are independent random vectors drawn from the uniform distribution over $[0,1]^d$. Furthermore, we assume that
$\xi_{s,i}$'s are i.i.d.\ such that $E(\xi_{s,i}|\{\bt_{s,j}\})=0$, $E(\xi_{s,i}^2|\{\bt_{s,j}\})=1$ and, for some $C_\xi<\infty$,
$E(\xi_{s,i}^4|\{\bt_{s,j}\})\le C_\xi$ almost surely.

Assuming that both $g_1$ and $g_2$ belong to a smoothness class $\Sigma$, we wish to test the hypothesis
$$
H_0: \|g_1\|_{W_2^\aalpha}=\|g_2\|_{W_2^\aalpha},\qquad \text{against}\qquad H_1: \big|\|g_1\|_{W_2^\aalpha}^2-\|g_2\|_{W_2^\aalpha}^2\big|\ge \rho^2,
$$
where for any function $g$ we denoted by $\|g\|_{W_2^\aalpha}$ the (anisotropic) Sobolev norm of order $\aalpha\in\RR_+^d$ (the precise definition is 
given below). It can be useful to perform such a test prior to using a shifted curve model in the context of curve registration \citep{DalCol12,Collier12}. Indeed,
if there exists $\ttau\in[0,1]^d$ such that $g_1(\bt)=g_2(\bt-\ttau)$ for every $\bt\in[0,1]^d$ and the function
$g_1$ is one-periodic, then necessarily $\|g_1\|_{W_2^\aalpha}=\|g_2\|_{W_2^\aalpha}$ for any $\aalpha$. Thus, the rejection of the null hypothesis
implies the inadequacy of the shifted curve model. In order to show how this type of test can be derived from
the framework presented in the previous subsections, let us consider  the case of a Sobolev ellipsoid $\Sigma$.

Let $\{\psi_m\}_{l\in\mcM}$ be an orthonormal basis of the subspace $L_{2,c}([0,1]^d)$ of $L_2([0,1]^d)$ consisting of all the functions
orthogonal to the constant function. We will assume that both $g_1$ and $g_2$ are centered (this implies that they are
orthogonal to the constant function as well). The Fourier coefficients of a function $g$ w.r.t.\ a basis $\{\psi_m\}$ will be denoted
by $\theta_m^\psi[g]$. We assume that for some array $\bc$ and some constant $L>0$ it holds that
$$
g_s\in\Sigma^0_L=\Big\{g\in L_{2,c}([0,1]^d): \sum_{m\in\mcM}\nolimits c_m\theta^\psi_m[g]^2\le 1,\ \|g\|_4\le L\Big\},\qquad\forall s\in\{1,2\}.
$$
Assume now that we wish to test
$$
H_0: \sum_{m\in\mcM} q_m\theta_m^\psi[g_1]^2=\sum_{m\in\mcM} q_m\theta^\psi_m[g_2]^2,
\quad \text{against}\quad H_1: \bigg|\sum_{m\in\mcM} q_m(\theta_m^\psi[g_1]^2-\theta_m^\psi[g_2]^2)\bigg|\ge \rho^2,
$$
where $\bq=\{q_m\}$ is a given array.
In order to show that this problem can be solved within the framework of the previous subsections, we introduce the functional set
$$
\Sigma_L = \Big\{f:[0,1]^{2d}\to\RR : f(t_1,\ldots,t_{2d}) = g_1(t_1,\ldots,t_{d})+g_2(t_{d+1},\ldots,t_{2d}) \text{ with } g_1,g_2\in\Sigma^0_{L}\Big\}.
$$
Setting $\mcL=\mcM\times\{1,2\}$ and for $l=(m,s)\in\mcM\times\{1,2\}$
$$
\varphi_l(\bt_1,\bt_2)=\psi_m(\bt_s),\qquad \text{for all}\qquad \bt=(\bt_1,\bt_2)\in[0,1]^d\times [0,1]^d,
$$
we get an orthonormal basis of $\Sigma_L$. Clearly, for a function $f\in\Sigma_L$, we have $\theta_l^\varphi[f]=\theta_{m,s}^\varphi[f]=\theta^\psi_{m}[g_s]$.
This implies that $\Sigma_L$ is included in the set $\Sigma^2_L=\{f:\sum_{(m,s)} c_m\theta_{m,s}^\varphi[f]^2\le 2;\ \|f\|_4\le2L\}$ and contains the set
$\Sigma^1_L=\{f:\sum_{(m,s)} c_m\theta_{m,s}^\varphi[f]^2\le 1, \|f\|_4 \le L\}$. Therefore, for studying the rate of separation of a testing procedure we
can assume that $f\in\Sigma^2_L$, whereas for establishing lower bounds on the minimax rate of separation we can use the relation $\Sigma^1_L\subset\Sigma_L$.
In both cases, this perfectly matches the framework of the previous subsections.

We give a concrete example by setting $\mcM=\ZZ^d$ and choosing as $\{\psi_\bm\}$ the Fourier basis in dimension $d$. Similarly to
the example in Subsection \ref{ssec:3.2}, we focus on anisotropic Sobolev smoothness classes defined via coefficients
$$
c_{\bm}= \sum_{j=1}^{d}(2\pi m_j)^{2\sigma_j},\qquad \bm\in\ZZ^d,
$$
for some $\ssigma=(\sigma_1,\ldots,\sigma_d)\in\RR^d_+$. As it was done previously, $\delta=\sum_{j=1}^d \alpha_j/\sigma_j$ and 
$\bar\sigma$ stands for the harmonic mean of $\sigma_j$'s: $\bar\sigma=\big(\frac1d\sum_{j=1}^d \sigma_j^{-1}\big)^{-1}$. We still 
assume that $\delta<1$ and $\bar\sigma>d/4$. To test the equality of Sobolev norms, we introduce the coefficients $q_\bl$, $\bl=(\bm,s)\in\ZZ^d\times\{1,2\}$,
of the quadratic functional $Q$:
$$
q_{\bm,s}=(-1)^s \prod_{j=1}^d(2\pi m_j)^{2\alpha_j},\qquad (\bm,s)\in\ZZ^d\times\{1,2\}.
$$
Theorems~\ref{nonpositiveupper} and \ref{nonpositivelower}, as well as the computations done in the proof of Proposition~\ref{premierex},
imply that the minimax rate of separation in the problem described above  is: $r_n^*=n^{-\frac{2(1-\delta)\bar\sigma}{4\bar\sigma+d}\wedge \frac14}$.
It is interesting to note that if $\delta\ge 1/2$ then we are in the irregular regime irrespectively of the value of $\bar\sigma$ and, therefore, the
rate of separation is strictly slower than the rate $n^{-1/4}$.

\section{Conclusion and outlook}\label{sec:conc}

We have presented a statistical analysis of the problem of testing the significance of the value $Q[f]$ for a quadratic functional
$Q$ of a regression function $f$. While the overwhelming majority of previous research focused on the case of a function $f$ observed at any
point in Gaussian white noise, we have considered here the more realistic setting when the observations are noisy values of $f$ at a finite number
of points uniformly randomly drawn from $[0,1]^d$. Furthermore, we have explored not only the case of positive semi-definite functional $Q$ but also
the situation when $Q$ is neither positive nor negative semi-definite. In the first situation we have established asymptotic results providing
the minimax rates of separation along with the sharp constants. In the second case, the analysis we have carried out is nonasymptotic and leads
to the asymptotically minimax rate of separation, which exhibits two different regimes: the regular and the irregular regimes. Another distinctive
feature of our approach is that we have put the emphasis on the multidimensional setting $d>1$, even if at this stage we have not tackled the
problem of increasingly high dimensionality: $d=d_n\to\infty$ as the sample size $n$ tends to infinity.

The results we have obtained are closely related to those of estimating quadratic functionals. While the presence of such a relation is not
surprising in itself, the actual nature of the relation uncovers some interesting new phenomena. In fact, the test statistic used in our work
is a properly normalized estimator of the quadratic functional $Q[f]$, which is constructed following the classical approach of weighted squared
linear functional estimation (cf., for instance, \cite{DonohoNussbaum}). Usually, the proper choice of the shrinkage weights and the resulting rates of
convergence differ in the problem of hypothesis testing and in the problem of estimation. This is why the well-known ``elbow'' effect (phase
transition) in estimating quadratic functionals disappears when the problem of hypotheses testing is considered for $Q[f]=\int_{[0,1]} f^2(t)\,dt$.
Interestingly, the results of Section~\ref{sec:NPF} show that this difference between the rates of convergence is erased when the quadratic
functional $Q[f]$ is neither positive nor negative. In fact, the rates of separation we have obtained in this case coincide with the
square-root of the rates of estimation \citep{DonohoNussbaum, Fan91}. Therefore, the ``elbow'' effect is present in this problem of hypotheses
testing. More interestingly,
the rates of separation we obtained in the case of positive semi-definite functionals $Q$ coincide with the rates of estimation of the functional
$\sqrt{Q[f]}$ in the case $Q[f] = \int_{[0,1]} f^2(t)\,dt$, at least in the Gaussian white noise model \citep{LepNemSpok}. An intriguing question
worth of being further explored is whether this analogy extends to the model of regression with random design and general positive semi-definite
functionals $Q[f]$.

Several relevant problems remained out of scope of the present paper. Most important ones are the possibility of extending our results to
the case of nondiagonal functionals $Q[f]$ and the attainability of the obtained rates of separation by adaptive tests. More specifically,
in some applications such as in deconvolution it may be more realistic to assume that the functional basis in which the smoothness of $f$ is
expressed does not coincide with the basis of the singular vectors of (the bilinear operator underlying) $Q$. This means that $Q[f]$
will be of the form $Q[f]=\sum_{l,l'\in\mcL} q_{l,l'} \theta_l[f]\theta_{l'}[f]$ rather than $Q[f]=\sum_{l\in\mcL} q_{l} \theta_l[f]^2$.
Furthermore, it would be more reasonable to replace the assumption $\sum_{l\in\mcL} c_l \theta_l[f]^2\le 1$ with some \textit{known}
array $\bc=\{c_l\}_{l\in\mcL}$ by the assumption $\sum_{l\in\mcL} c_l(\mu^*) \theta_l[f]^2\le 1$, where  $\bc(\mu)= \{c_l(\mu)\}_{l\in\mcL}$
is a collection of arrays such that the mapping $\mu\mapsto \bc(\mu)$ is known but the precise value $\mu^*$ for which the smoothness constraint
is valid is unknown. In the light of the previous discussion, it seems natural to study these two extensions (nondiagonal $Q$ and adaptation to
the smoothness class) by considering the problem of testing and the problem of estimating functionals in a joint framework. In particular, any
progress in establishing upper bounds for estimators of $Q[f]$ or $\sqrt{|Q[f]|}$ will straightforwardly lead to upper bounds for the rates of
separation. Quite surprisingly, these problems of estimation received little attention in the context of nonparametric regression\footnote{
Minimax and adaptive estimation for (nondiagonal) quadratic functionals is well studied in the case of Gaussian white noise model. However,
these results do not always carry over the regression model as noticed by \citet{Efrom2003}.}. They constitute
interesting avenues for future research.

\appendix

\section{Proofs of results stated in Section~\ref{sec:2}}

\subsection{Proof of Proposition~\ref{prop_1}}

Throughout the proof, the terms  $o(1)$, $O(1)$  and the equivalences are uniform over $\Sigma$. Let $\mcL(\bw_n)$ be the support of $\bw_n$.
$E_f^{\mcD_2}$ will denote the conditional expectation with respect to $\mcD_2$.
We define
\begin{align}
&h_n[f,\bw_n] = \Big(\frac{m(m-1)}{2}\Big)^{1/2}\sum_{l\in \mcL(\bw_n)}\nolimits w_{l,n} \theta_l^2[f],\label{h_n}\\
&G_n(\bt_1,\bt_2)=\sum_{l\in \mcL(\bw_n)}w_{l,n}\varphi_l(\bt_1)\varphi_l(\bt_2).
\end{align}
This allows us to rewrite the U-statistic $U_n$ in the form $U_n=U_{n,0}+U_{n,1}+U_{n,2}$
where
$$
U_{n,k}=\Big(\frac{2}{m(m-1)}\Big)^{1/2}\sum_{1\leq i<j\leq m}K_{n,k}(\tilde{\bz}_i,\tilde{\bz}_j),\qquad k=0,1,2,
$$
are U-statistics with the kernels
\begin{align}
K_{n,0}(\tilde\bz_1,\tilde\bz_2) &= \xi_1\xi_2G_n(\bt_1,\bt_2),\\
K_{n,1}(\tilde\bz_1,\tilde\bz_2) &= \Big[\xi_1\big(f-{\proj}\big)(\bt_2)+\xi_2\big(f-{\proj}\big)(\bt_1)\Big]G_n(\bt_1,\bt_2),\\
K_{n,2}(\tilde\bz_1,\tilde\bz_2) &= \big(f-{\proj}\big)(\bt_1)\big(f-{\proj}\big)(\bt_2)G_n(\bt_1,\bt_2).
\end{align}
To prove Proposition~\ref{prop_1} and the subsequent results, we need two auxiliary lemmas.

\begin{lemma}\label{lem:AD1}
Let  $\bw_n=(w_{l,n})_{l\in\mcL}$ be a family of positive numbers containing only a finite number of nonzero entries and such that
$\sum_{l\in\mcL} w_{l,n}^2=1$. Let $\mcL(\bw_n)$ be the support of $\bw_n$. Then the expectation of the U-statistic $U_{n}$ is given by:
\begin{align*}
E_f[U_{n}]=E_f[U_{n,2}]=h_n[f,\bw_n],
\end{align*}
whereas for the variances it holds
\begin{align}
E_f[U_{n,0}^2]
	&=1,\nonumber\\
E_f[U_{n,1}^2]
	&\le 2\|\bw_n\|_\infty^2\Big(\sup_{\bt\in\Delta}\sum_{l\in\mcL(\bw_n)}\nolimits \varphi_l^2(\bt)\Big)
		   \Big(\|\Pi_{S_F}f\|_2^2+E_f\big[\|\pf-\proj\|_2^2\big]\Big),\label{U1}\\
Var_f[U_{n,2}]
	&\leq 8\|\bw_n\|_\infty^2\Big(\sup_{\bt\in\Delta}\sum_{l\in\mcL(\bw_n)} \varphi_l^2(\bt)\Big)
		   \Big(\|\Pi_{S_F}f\|_4^4+E_f\big[\|\pf-\proj\|_4^4\big]\Big)\nonumber\\
	&\quad+8h_n[f,\bw_n]\|\bw_n\|_\infty\Big(\sup_{\bt\in\Delta}\sum_{l\in\mcL(\bw_n)} \varphi_l^2(\bt)\Big)^{1/2}
		   \Big(\|\Pi_{S_F}f\|_4^4+E_f\big[\|\pf-\proj\|_4^4\big]\Big)^{1/2}.\label{U2}	
\end{align}
\end{lemma}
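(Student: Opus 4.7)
My plan is to compute the four quantities by conditioning on the auxiliary sample $\mcD_1$, so that the pilot $\proj$ becomes a deterministic function, and then to exploit (i) the i.i.d.\ structure of $\{(\bt_i,\xi_i)\}_{i\le m}$, (ii) the orthonormality of $\{\varphi_l\}$ under the uniform law on $\Delta$, and (iii) the key algebraic identity $\langle g,\varphi_l\rangle=\theta_l[f]$ for every $l\in S_F$, where $g:=f-\proj$. Identity (iii) holds because both $\proj$ and $\Pi_{S_F^c}f$ lie in $\text{span}\{\varphi_l:l\in S_F^c\}$, hence are orthogonal to each $\varphi_l$ with $l\in S_F$, and this is precisely what forces $h_n[f,\bw_n]$ to appear in the mean of $U_{n,2}$.

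For the means, since $E[\xi_i]=0$ and $\xi_i$ is independent of both $\mcD_1$ and of $\{\bt_j\}_{j\neq i}$, the kernels $K_{n,0}$ and $K_{n,1}$ integrate to zero, giving $E_f[U_{n,0}]=E_f[U_{n,1}]=0$. For $U_{n,2}$, independence of $\bt_1,\bt_2$ and orthonormality yield $E_f^{\mcD_2}[g(\bt_1)g(\bt_2)G_n(\bt_1,\bt_2)]=\sum_{l\in\mcL(\bw_n)}w_{l,n}\langle g,\varphi_l\rangle^2=\sum_{l\in\mcL(\bw_n)}w_{l,n}\theta_l^2[f]$, and summing over $1\le i<j\le m$ with the prescribed normalization reconstructs $h_n[f,\bw_n]$. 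For the second moments of $U_{n,0}$ and $U_{n,1}$, I would expand each square as a double sum over pairs of index couples and use the zero-mean independence of the $\xi_i$'s to annihilate every pair of non-matching indices. For $U_{n,0}$ this leaves $E[G_n(\bt_1,\bt_2)^2]=\sum_l w_{l,n}^2=1$. For $U_{n,1}$ the diagonal expectation reduces by symmetry and $E[\xi_i^2]=1$ to $2E[g^2(\bt_1)G_n^2(\bt_1,\bt_2)\mid\mcD_1]$; integrating out $\bt_2$ using orthonormality produces $\sum_l w_{l,n}^2\varphi_l^2(\bt_1)\le\|\bw_n\|_\infty^2\sup_\bt\sum_{l\in\mcL(\bw_n)}\varphi_l^2(\bt)$, and the Pythagorean split $\|g\|_2^2=\|\Pi_{S_F}f\|_2^2+\|\pf-\proj\|_2^2$, followed by taking expectation over $\mcD_1$, yields~(\ref{U1}).

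The main obstacle is the fourth bound, the variance of $U_{n,2}$, whose kernel has non-zero mean. I would run the Hoeffding decomposition $K_{n,2}=\mu_n+a_n(\bz_1)+a_n(\bz_2)+r_n(\bz_1,\bz_2)$ with $\mu_n=\sum_l w_{l,n}\theta_l^2[f]$ and first-order projection $a_n(\bz)=g(\bt)s_n(\bt)-\mu_n$, where $s_n(\bt):=\sum_l w_{l,n}\theta_l[f]\varphi_l(\bt)$. Orthogonality of the Hoeffding pieces gives $\text{Var}[U_{n,2}\mid\mcD_1]=2(m-1)\zeta_1+\zeta_2$, with $\zeta_1\le E[g^2 s_n^2\mid\mcD_1]$ and $\zeta_2\le E[K_{n,2}^2\mid\mcD_1]$. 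The degenerate piece $\zeta_2$ produces the first term of~(\ref{U2}): bound $G_n^2(\bt_1,\bt_2)$ by $\|\bw_n\|_\infty^2(\sup_\bt\sum_{l\in\mcL(\bw_n)}\varphi_l^2(\bt))$ times a factor extracted via Cauchy-Schwarz, and absorb $g^2(\bt_1)g^2(\bt_2)$ into $\|g\|_4^4$ via Fubini and Hölder on a probability space.

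For $\zeta_1$ the bound must surface the factor $h_n[f,\bw_n]$. I would first show, by Cauchy-Schwarz on the defining sum, that $s_n(\bt)^2\le\mu_n\|\bw_n\|_\infty\sum_{l\in\mcL(\bw_n)}\varphi_l^2(\bt)$, whence $\|s_n\|_4^2\le\mu_n\|\bw_n\|_\infty(\sup_\bt\sum_{l\in\mcL(\bw_n)}\varphi_l^2(\bt))^{1/2}$. Another Cauchy-Schwarz gives $\zeta_1\le\|g\|_4^2\|s_n\|_4^2$, and the prefactor $2(m-1)$ combines with the identity $\mu_n\sqrt{m(m-1)/2}=h_n[f,\bw_n]$ to produce the required $h_n$ factor in the second term of~(\ref{U2}); the reason this step is delicate bookkeeping is that alternative Cauchy-Schwarz splits lose either the $h_n$ factor or the correct dependence on $\sup_\bt\sum\varphi_l^2$. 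Finally, taking expectation over $\mcD_1$ and using the triangle inequality together with Jensen's inequality to decouple $\|g\|_4$ into $\|\Pi_{S_F}f\|_4$ and $\|\pf-\proj\|_4$ contributions absorbs everything into the quantity $\|\Pi_{S_F}f\|_4^4+E_f[\|\pf-\proj\|_4^4]$, yielding~(\ref{U2}).
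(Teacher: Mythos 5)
Your overall architecture is the paper's own: the means, the identity $E[U_{n,0}^2]=1$, and the bound \eqref{U1} are obtained exactly as in the paper, and your Hoeffding decomposition of $\mathrm{Var}_f[U_{n,2}]$ is just a repackaging of the paper's split into the coincident-pair term ($A_{n,1}=E[K_{n,2}^2]$), the one-shared-index covariance term ($A_{n,2}$, your $2(m-1)\zeta_1$, where the off-by-one versus $2(m-2)$ is harmless), and a discarded nonpositive remainder. Your treatment of $\zeta_1$ is correct and delivers the second term of \eqref{U2}: the chain $s_n(\bt)^2\le \mu_n\|\bw_n\|_\infty\sum_l\varphi_l^2(\bt)$, then $\|s_n\|_4^2\le \mu_n\|\bw_n\|_\infty(\sup_\bt\sum_l\varphi_l^2(\bt))^{1/2}$, then $\zeta_1\le\|g\|_4^2\|s_n\|_4^2$, matches (in a slightly different order of operations) the paper's Cauchy--Schwarz-plus-Bessel argument and yields the constant $8$ after $\|g\|_4^4\le 8(\|\Pi_{S_F}f\|_4^4+\|\pf-\proj\|_4^4)$.

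The genuine gap is in your bound on the degenerate piece $\zeta_2\le E[K_{n,2}^2\mid\mcD_1]=E[g^2(\bt_1)g^2(\bt_2)G_n^2(\bt_1,\bt_2)\mid\mcD_1]$. Any pointwise bound on $G_n^2$ of the form you describe necessarily spends the orthogonality of the basis before integration: Cauchy--Schwarz gives $G_n^2(\bt_1,\bt_2)\le\|\bw_n\|_\infty^2\big(\sum_l\varphi_l^2(\bt_1)\big)\big(\sum_l\varphi_l^2(\bt_2)\big)$, and after extracting one supremum the leftover factor $\sum_l\varphi_l^2(\bt_2)$ cannot be absorbed into $\|g\|_4^4$ by H\"older without producing either a second power of $\sup_\bt\sum_l\varphi_l^2(\bt)$ or an extra $\|\bw_n\|_0^{1/2}$; e.g.\ the cleanest outcome of that route is $\|\bw_n\|_\infty^2(\sup_\bt\sum_l\varphi_l^2(\bt))^2\|g\|_2^4$. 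That is strictly weaker than the first term of \eqref{U2} and, under \textbf{[C3]} and $\|\bw_n\|_\infty^2\|\bw_n\|_0\le C_w$, grows like $\|\bw_n\|_0\|g\|_2^4$, which is not $o(1)$ under the alternative — so the loss would propagate into Proposition~\ref{prop_2}. The correct step is an $L^2$ argument \emph{after} integrating: write $E[K_{n,2}^2\mid\mcD_1]=\sum_{l,l'}w_{l,n}w_{l',n}\big(\int_\Delta g^2\varphi_l\varphi_{l'}\big)^2\le\|\bw_n\|_\infty^2\sum_{l\in\mcL(\bw_n)}\sum_{l'}\langle g^2\varphi_l,\varphi_{l'}\rangle^2$ and apply Bessel's inequality in $l'$ to the function $g^2\varphi_l$, giving $\le\|\bw_n\|_\infty^2\sum_{l\in\mcL(\bw_n)}\int_\Delta g^4\varphi_l^2\le\|\bw_n\|_\infty^2\big(\sup_\bt\sum_{l\in\mcL(\bw_n)}\varphi_l^2(\bt)\big)\|g\|_4^4$, which is exactly what \eqref{U2} requires.
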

\begin{proof}
It is clear that $E_fU_{n,0}=E_fU_{n,1}=0$, while
$$
E_f[U_{n,2}]
	= \Big(\frac{m(m-1)}{2}\Big)^{1/2} E_f[K_{n,2}(\tilde\bz_1,\tilde\bz_2)]
$$
with
\begin{align*}
E_f[K_{n,2}(\tilde\bz_1,\tilde\bz_2)]&=E_{f}\Big[\sum_{l\in \mcL(\bw_n)}\nolimits w_{l,n}
\Big( \int \big(f(\bt)-\proj(\bt)\big)\varphi_l(\bt)d\bt\Big)^2\Big].
\end{align*}
As $\proj \in \text{ span}\big(\{\varphi_l\}_{l\in {S_F^c}}\big)$, we have  $\int {\proj}\varphi_l=0$ for all $ l\in {S_F}$.
Therefore
$$
E_f[U_{n,2}]=\Big(\frac{m(m-1)}{2}\Big)^{1/2}\sum_{l\in \mcL(\bw_n)}\nolimits w_{l,n} \theta_l^2[f]=h_n[f,\bw_n].
$$
Now, let us evaluate the variances. Since $\xi_i$s are non correlated zero-mean random variables with variance one, and
$\varphi_l$'s are orthonormal, it holds that $E_f[U_{n,0}^2]=E_f[G_n(\bt_1,\bt_2)^2]=\sum_l w_{l,n}^2=1$. For $U_{n,1}$, we have
\begin{align*}
\text{Var}_f [U_{n,1}]=E_f[ U_{n,1}^2]&
=E_{f}E_f^{\mcD_2}[K_{n,1}^2(\tilde\bz_1,\tilde\bz_2)].
\end{align*}
Using the definition of $G_n(\bt_1,\bt_2)$, we get
\begin{align*}
E_f^{\mcD_2}[K_{n,1}^2(\tilde\bz_1,\tilde\bz_2)]&=2\int_\Delta\int_\Delta \big(f-\proj\big)^2(\bt_1)G_n^2(\bt_1,\bt_2)d\bt_1d\bt_2\\
&=2\int_\Delta \big(f-\proj\big)^2(\bt_1)\sum_{l\in \mcL(\bw_n)} w_{l,n}^2\varphi_l^2(\bt_1)d\bt_1\\
&\le 2\Big(\max_{l\in\mcL(\bw_n)} w_{l,n}^2\Big)\Big(\sup_{\bt\in\Delta}\sum_{l\in\mcL(\bw_n)}\nolimits \varphi_l^2(\bt)\Big)
		   \|f-\proj\|_2^2.
\end{align*}
Then, the Pythagoras theorem yields
\begin{align*}
E_{f}\|f-\proj\|_2^2&= \|f-\pf\|_2^2+E_{f}\|\pf	-\proj\|_2^2=\|\Pi_{S_F}f\|_2^2+E_{f}\|\pf	-\proj\|_2^2.
\end{align*}
This completes the proof (\ref{U1}). As for the variance of $U_{n,2}$, we have
$$
Var_f[U_{n,2}]=E_{f}E_f^{\mcD_2}[U_{n,2}^2]-(E_f[ U_{n,2}])^2=A_{n,1}+A_{n,2}+A_{n,3},
$$
where
\begin{align*}
A_{n,1}&=E_{f}\iint  \big(f-{\proj}\big)^2(\bt_1)\big(f-{\proj}\big)^2(\bt_2)G_n^2(\bt_1,\bt_2)d\bt_1d\bt_2,\\
A_{n,2}&=\frac{4}{m(m-1)}\binom{m}{3}E_{f}\iiint \big(f-{\proj}\big)^2(\bt_1)\big(f-{\proj}\big)(\bt_2)G_n(\bt_1,\bt_2)\\
       &\qquad\qquad\qquad\qquad\qquad\qquad\qquad\qquad \times\big(f-\proj\big)(\bt_3)G_n(\bt_1,\bt_3)d\bt_1d\bt_2d\bt_3,
\end{align*}
and
\begin{align*}
A_{n,3}= \frac{4}{m(m-1)}\binom{m}{4}E_{f}\Big\{\iint f(\bt_1)f(\bt_2)G_n(\bt_1,\bt_2)d\bt_1d\bt_2\Big\}^2 -(E_fU_{n,2})^2.
\end{align*}
Let us bound the first term $A_{n,1}$:
\begin{align*}
A_{n,1}&=E_{f}\sum_{l, {l'}\in\mcL(\bw_n)}w_{l,n}w_{l',n}\Big(\int (f-\proj)^2(\bt)\varphi_l(\bt)
\varphi_{{l'}}(\bt)\,d\bt\Big)^2.
\end{align*}
Now, in view of Bessel's inequality,
\begin{align*}
A_{n,1}
	&\le \max_{l\in\mcL(\bw_n)} w_{l,n}^2 E_{f,B}\sum_{l\in\mcL(\bw_n)}\int (f-\proj)^4(\bt)\varphi_l^2(\bt)\,d\bt\\
	&\le \Big(\max_{l\in\mcL(\bw_n)} w_{l,n}^2\Big)\Big(\sup_{\bt\in\Delta}\sum_{l\in\mcL(\bw_n)}\nolimits \varphi_l^2(\bt)\Big)
	E_{f}\big[\big\|f-\proj\big\|_4^4\big],
\end{align*}
and the expression inside the last expectation can be bounded using the inequality
$\big\|f-\proj\big\|_4^4\le 8(\|\Pi_{S_F}f\|_4^4+\|\pf-\proj\|_4^4)$.

The term $A_{n,2}$ can be dealt with similarly. Using the Cauchy-Schwarz inequality,
\begin{align*}
A_{n,2}
	&= \frac{4}{m(m-1)}\binom{m}{3} \sum_{l,l'\in \mcL(\bw_n)}w_{l,n}w_{l',n}\theta_l[f]\theta_{l'}[f]
	   E_{f}\Big\{\int \big(f-\proj\big)^2(\bt) \varphi_l(\bt)\varphi_{l'}(\bt)\,d\bt\Big\}\\
	&\le \binom{m}{2}^{1/2}\Big( \sum_{l}w_{l,n}^2\theta_l[f]^2\Big)
	    \Big( \sum_{l,l'\in \mcL(\bw_n)}\Big\{\int E_{f}\big[\big(f-\proj\big)^2(\bt)\big] \varphi_l(\bt)\varphi_{l'}(\bt)\,d\bt
		\Big\}^2\Big)^{1/2}\\
	&\leq \Big(\max_{l\in\mcL(\bw_n)} w_{l,n}\Big) h_n[f,\bw_n] \Big( \sum_{l,l'\in \mcL(\bw_n)}E_{f}\Big\{\int \big(f-\proj\big)^2(\bt)
        \varphi_l(\bt)\varphi_{l'}(\bt)\,d\bt\Big\}^2\Big)^{1/2}.
\end{align*}
By virtue of the Bessel inequality, it holds that
\begin{align*}
A_{n,2}
	&\leq \Big(\max_{l\in\mcL(\bw_n)} w_{l,n}\Big) h_n[f,\bw_n] \Big( \sum_{l\in \mcL(\bw_n)}\int E_{f}\big[\big(f-\proj\big)^4(\bt)\big]
	 \varphi_l^2(\bt)\,d\bt\Big)^{1/2}\\
	&\leq \Big(\max_{l\in\mcL(\bw_n)} w_{l,n}\Big) h_n[f,\bw_n] \Big(\sup_{\bt\in\Delta} \sum_{l\in \mcL(\bw_n)}\varphi_l^2(\bt)\,d\bt\Big)^{1/2}
	  \big(E_{f}[\|f-\proj\|^4_4]\big)^{1/2}.
\end{align*}
The last expectation can be bounded in the same way as we did several lines above for the term $A_{n,1}$.
The last term $A_{n,3}$ is actually negative
$$
A_{n,3}=
\frac{4}{m(m-1)}\binom{m}{4}\Big(\sum_{l\in\mcL(\bw_n)}\nolimits w_{l,n}\theta_l^2\Big)^2-
	\frac{m(m-1)}{2}\Big(\sum_{l\in\mcL(\bw_n)}\nolimits w_{l,n}\theta_l^2\Big)^2\leq 0.$$
Combining all these estimates, we get (\ref{U2}).
\end{proof}

\begin{lemma}\label{lem:CLT}
Let  $\bw_n=(w_{l,n})_{l\in\mcL}$ be a family of positive numbers containing only a finite number of nonzero entries and such that
$\sum_{l\in\mcL} w_{l,n}^2=1$. Assume that the random variable $\xi_1$ has finite fourth moment: $E_f[\xi_1^4]<\infty$. If, as $n\to\infty$,
\begin{align}
\|\bw_{n}\|_\infty =o(1)\qquad\text{and}\qquad \|\bw_n\|_\infty^2\Big(\sup_{\bt\in\Delta}\sum_{l\in\mcL(\bw_n)}\nolimits\varphi_l(\bt)^2\Big)^2=o(n),
\end{align}
then $U_{n,0}$ is asymptotically Gaussian $\mcN(0,1)$.
\end{lemma}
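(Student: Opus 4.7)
The plan is to recognize $U_{n,0}$ as a degenerate $U$-statistic and invoke the central limit theorem for such objects, as in Hall (1984). Write $\bz_i=(\xi_i,\bt_i)$ and $H_n(\bz_1,\bz_2)=\xi_1\xi_2 G_n(\bt_1,\bt_2)$. Since $\xi_i$ is mean-zero and independent of $\bt_i$, we have $E[H_n(\bz_1,\bz_2)\mid\bz_1]=0$, so the kernel is completely degenerate. Using the orthonormality of $\{\varphi_l\}$, a direct calculation gives $\sigma_n^2:=E[H_n^2]=E[\xi_1^2]^2\,E[G_n^2(\bt_1,\bt_2)]=\sum_l w_{l,n}^2=1$. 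Thus $U_{n,0}=\binom{m}{2}^{-1/2}\sigma_n^{-1}\sum_{i<j}H_n(\bz_i,\bz_j)$ is of the form treated by Hall's theorem, and it remains to verify the two standard sufficient conditions.

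The first condition requires $E[G_H^2(\bz_1,\bz_2)]\to 0$, where $G_H(\bz_1,\bz_2):=E[H_n(\bz_3,\bz_1)H_n(\bz_3,\bz_2)\mid\bz_1,\bz_2]$. A quick computation using orthonormality of $\{\varphi_l\}$ yields
\begin{equation*}
G_H(\bz_1,\bz_2)=\xi_1\xi_2\sum_{l\in\mcL(\bw_n)}w_{l,n}^2\,\varphi_l(\bt_1)\varphi_l(\bt_2),
\end{equation*}
so that $E[G_H^2]=\sum_l w_{l,n}^4\le\|\bw_n\|_\infty^2\sum_l w_{l,n}^2=\|\bw_n\|_\infty^2=o(1)$ by the first assumption.

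The second condition requires $E[H_n^4]/m\to 0$. Since $\xi_1$ has finite fourth moment, $E[H_n^4]=(E[\xi_1^4])^2\,E[G_n^4(\bt_1,\bt_2)]$. To control $E[G_n^4]$ I would combine the trivial bound $E[G_n^4]\le\|G_n\|_\infty^2\,E[G_n^2]=\|G_n\|_\infty^2$ with a pointwise Cauchy–Schwarz estimate:
\begin{equation*}
|G_n(\bt_1,\bt_2)|\le\Bigl(\sum_l w_{l,n}^2\varphi_l^2(\bt_1)\Bigr)^{1/2}\Bigl(\sum_l\varphi_l^2(\bt_2)\Bigr)^{1/2}\le\|\bw_n\|_\infty\sup_{\bt\in\Delta}\sum_{l\in\mcL(\bw_n)}\varphi_l^2(\bt).
\end{equation*}
Hence $E[H_n^4]/m\le C\,\|\bw_n\|_\infty^2(\sup_\bt\sum_l\varphi_l^2(\bt))^2/m=o(1)$ by the second hypothesis (using $m\asymp n$).

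The main technical delicacy is the interplay between the two hypotheses and the sup-norm bound on $G_n$: the assumption on $\|\bw_n\|_\infty$ controls the contribution of the ``diagonal'' term $E[G_H^2]$, while the joint assumption $\|\bw_n\|_\infty^2(\sup_\bt\sum_l\varphi_l^2(\bt))^2=o(n)$ is precisely what is needed to kill the fourth moment term $E[H_n^4]/m$. If one were to allow $\|\bw_n\|_\infty$ to be of order $\|\bw_n\|_0^{-1/2}$ (spread-out filters) and $\sup_\bt\sum_l\varphi_l^2(\bt)\lesssim\|\bw_n\|_0$ (as in assumptions {\bf[C3]}--{\bf[C4]} later), both conditions reduce to $\|\bw_n\|_0=o(n)$, which is consistent with the regime covered by Proposition~\ref{prop_1}. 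An alternative (and essentially equivalent) route would be to proceed by the martingale CLT applied to the sequence $M_k=\sum_{j<k}\xi_j\xi_k G_n(\bt_j,\bt_k)$, verifying convergence of the conditional variance to $1$ and a conditional Lindeberg condition; the bookkeeping would reduce to the very same two moment estimates above.
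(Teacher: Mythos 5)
Your proof is correct and takes the same route as the paper, whose entire proof of this lemma is the single sentence that it is ``an immediate consequence of [Theorem 1, Hall (1984)]''. You have simply supplied the verification that the paper leaves implicit: the degeneracy of the kernel, the normalization $E[H_n^2]=\sum_l w_{l,n}^2=1$, and the two moment conditions $E[G_H^2]=\sum_l w_{l,n}^4\le\|\bw_n\|_\infty^2=o(1)$ and $E[H_n^4]/m\le C\,\|\bw_n\|_\infty^2\big(\sup_{\bt}\sum_{l\in\mcL(\bw_n)}\varphi_l^2(\bt)\big)^2/m=o(1)$, which are exactly what the stated hypotheses are designed to deliver.
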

\begin{proof}
This result is an immediate consequence of \cite[Theorem 1]{Hall84}.
\end{proof}

With these tools at hand, we are now in a position to establish the asymptotic normality of the U-statistic $U_n$ which leads to
an evaluation of the type I error of the U-test. Let us recall that, for $f\in \mcF_0$, it holds $Q[f]=\sum q_l\theta_l[f]^2=0$
and, therefore, $\theta_l[f]=0$ for all $l\in{S_F}=\{l : q_l\neq 0\}$. Hence, for every $f\in\mcF_0$, $h_n[f,\bw_n]=0$ and $\Pi_{S_F}f=0$.
So, it follows from Lemma~\ref{lem:AD1} that under the assumptions of the proposition, the convergences $E_f[U_{n,1}^2]\to 0$ and
$E_f[U_{n,2}^2]\to 0$ hold true uniformly in $f\in\mcF_0$. This implies that $U_{n,1}$ and $U_{n,2}$ tend to zero in $P_f$-probability,
uniformly in $f\in\mcF_0$. On the other hand, according to Lemma~\ref{lem:CLT}, $U_{n,0}\to \mcN(0,1)$ in distribution. The claim of
the proposition follows from Slutsky's lemma.

\subsection{Proof of Proposition~\ref{prop_2}}

We first note that for every $\bar h>0$ it holds
\begin{align}\label{eq:35}
\sup_{f\in\mcF_1(\rho)} P_f(U_n\le u) =\Big(\sup_{\substack{f\in\mcF_1(\rho)\\h_n[f,\bw_n]> \bar h}} P_f(U_n\le u)\Big)
\bigvee \Big(\sup_{\substack{f\in\mcF_1(\rho)\\h_n[f,\bw_n]\le \bar h}} P_f(U_n\le u)\Big).
\end{align}
The value of  $\bar h$ will be made precise later in the proof. Assume merely  by now that $\bar h>2(1+u)$. Then,
\begin{align*}
\sup_{\substack{f\in\mcF_1(\rho);\\h_n[f,\bw_n]> \bar h}} P_{f}(U_n\leq {u})
&\leq \sup_{f\in\Sigma;h_n[f,\bw_n]> \bar h}\frac{\text{Var}_{f}[U_n]}{\big( E_{f}[U_n]-{u}\big)^2}
= \sup_{f\in\Sigma;h_n[f,\bw_n]> \bar h}\frac{\text{Var}_{f}[U_n]}{\big( h_n[f,\bw_n]-{u}\big)^2}.
\end{align*}
Using the conditions of the proposition and the inequalities of Lemma~\ref{lem:AD1}, we get that for some
constants $C,C'$ independent of $\bar h$,
\begin{align}\label{eq:36}
\sup_{\substack{f\in\mcF_1(\rho)\\h_n[f,\bw_n]> \bar h}} P_{f}(U_n\leq {u})
&\le \sup_{f\in\Sigma;h_n[f,\bw_n]> \bar h}\frac{C(1+h_n[f,\bw_n])}{\big( h_n[f,\bw_n]-{u}\big)^2}
\le C\frac{1+\bar h}{\big(\bar h-{u}\big)^2}\le C'\bar h^{-1}.
\end{align}
Let us switch to the second sup in (\ref{eq:35}). Let $\delta_n>0$ be a sequence tending to zero. One readily checks that
\begin{align}\label{eq:37}
P_f(U_n\le u)
	 &= P_f(h_n[f,\bw_n]+U_{n,0}+U_{n,1}+(U_{n,2}-h_n[f,\bw_n])\le u)\nonumber\\
	 &\le P_f(h_n[f,\bw_n]+U_{n,0}\le u+\delta_n)+P_f(-U_{n,1}-(U_{n,2}-h_n[f,\bw_n])\ge \delta_n)\nonumber\\
	 &\le F_{U_{0,n}}(u-h_n[f,\bw_n]+\delta_n)+\frac{2Var_f(U_{n,1})+2Var_f(U_{n,2})}{\delta_n^2},	
\end{align}
where $F_{U_{0,n}}(\cdot)$ is the c.d.f.\ of $U_{0,n}$.
On the one hand, we know from Lemma~\ref{lem:CLT} that $U_{n,0}$ converges in distribution to $\mcN(0,1)$.
This entails that $F_{U_{0,n}}$ converges uniformly over $\RR$ to  $\Phi$. Therefore,
$$
F_{U_{0,n}}(u-h_n[f,\bw_n]+\delta_n)=\Phi(u-h_n[f,\bw_n]+\delta_n)+o(1)=\Phi(u-h_n[f,\bw_n])+o(1)+\delta_n O(1).
$$
On the other hand, in view of Lemma~\ref{lem:AD1}, $Var_f(U_{n,1})+Var_f(U_{n,2})=O(\|\Pi_{S_F}f\|_4^4+\|\Pi_{S_F}f\|_2^2)$.

Then we have,
\begin{align*}
\|\Pi_{S_F}f\|_2^2
	&= \sum_{l\in S_F}\theta_l^2\leq \frac{1}{\zeta_n}\sum_{w_{l,n}\ge \zeta_n} w_{l,n}\theta_l ^2+\sum_{w_{l,n}< \zeta_n} \theta_l ^2\\
	& \leq \frac{\sqrt{2}\,h_n[f,\bw_n]}{\zeta_n(m-1)}+\sup_{l\in S_F :w_{l,n}<\zeta_n} c_l^{-1}.
\end{align*}
Applying H\"older's inequality we get
$\| \Pi_{S_F}f\|_4^4 \leq \|\Pi_{S_F}f\|_2^{2(p-4)/(p-2)}\|\Pi_{S_F}f\|_p^{2p/(p-2)}$.
Therefore, we have
\begin{align*}
\sup_{\substack{f\in\mcF_1(\rho)\\h_n[f,\bw_n]\le \bar h}} P_f(U_n\le u)
	&\le \sup_{f\in\mcF_1(\rho)} \Phi(u-h_n[f,\bw_n])+o(1)+\delta_nO(1)+\frac{o(1)(\bar h^{(p-4)/(p-2)}+\bar h)}{\delta_n^2}.
\end{align*}
Choosing $\bar h$ large enough and then making $\delta_n$ tend to zero sufficiently slowly we get the desired result.

\subsection{Proof of Proposition~\ref{prop_3}}

Using Kneser's minimax theorem for bilinear forms \citep{Kneser52}, we can interchange the sup and the inf as follows:
\begin{align}\label{eq:23}
\sup_{\substack{\bw\in\RR^{\mcL}_+\\ \|\bw\|_2=1}}
\inf_{\substack{\bv\in\RR^{\mcL}_+\\ \langle \bv,\bc\rangle \le 1, \langle \bv,\bq\rangle \ge \rho^2 }}
\langle \bw ,\bv\rangle
=
    \inf_{\substack{\bv\in\RR^{\mcL}_+\\ \langle \bv,\bc\rangle \le 1, \langle \bv,\bq\rangle \ge \rho^2 }}
    \sup_{\substack{\bw\in\RR^{\mcL}_+\\ \|\bw\|_2=1}}
    \langle \bw ,\bv\rangle
=   \inf_{\substack{\bv\in\RR^{\mcL}_+\\ \langle \bv,\bc\rangle \le 1, \langle \bv,\bq\rangle \ge \rho^2 }}
    \|\bv\|_2,
\end{align}
Furthermore, the array $\bw^*$ attaining the sup is given by $w^*_l=v_l/\|\bv\|_2$. Now, the minimization at the right-hand side
of (\ref{eq:23}) involves a convex second-order cost function $\|\bv\|_2^2$ and linear constraints $v_l\ge 0$, $\langle \bv,\bc\rangle \le 1$ and
$\langle\bv,\bq\rangle\le\rho^2$. Therefore, according to KKT conditions, if there exist $\mu,\lambda \ge 0$ and $\nnu\in\RR_+^\mcL$ satisfying for some
$\bv^*\in\RR^\mcL_+$ the conditions $2\bv^*+\lambda\bc-\mu\bq-\nnu=0$ and $\lambda(\langle\bv^*,\bc\rangle-1)=0$, $\mu(\langle\bv^*,\bq\rangle-\rho^2)=0$
and $\nu_l v_l^*=0$ for all $l$, then $\bv^*$ is a solution to the minimization problem (\ref{eq:23}). Under the conditions of the proposition, one easily
checks that these KKT conditions are fulfilled with $\lambda=2/\sum_l c_l(T_\rho q_l-c_l)_+$, $\mu=2T_\rho/\sum_l c_l(T_\rho q_l-c_l)_+$ and
$\nu_l=2(c_l-T_\rho q_l)_+/\sum_l c_l(T_\rho q_l-c_l)_+$.

\subsection{Proof of Theorem~\ref{thm_1}}
To ease notation, we set $\mcN_{n,\gamma}=\mcN(T_{n,\gamma})$.
We first check that under the assumptions of the theorem all the conditions required in Propositions \ref{prop_1} and \ref{prop_2}
are fulfilled. Since $\|\widehat\bw^*_n\|_0=|\mcN_{n,\gamma}|$ and $\|\widehat\bw^*_n\|_\infty^2 \le
\max_{l\in\mcN_{n,\gamma}}q_l^2/ \sum_{l\in\mcN_{n,\gamma}} \big(q_l-\frac{c_l}{T_{n,\gamma}}\big)^2$, condition \textbf{[C1]} implies
the first condition of Proposition~\ref{prop_1}. Conditions \textbf{[C3]} and \textbf{[C4]} imply respectively the third and the second conditions of Proposition~\ref{prop_1}.
Finally, condition \textbf{[C6]} implies the fourth condition of Proposition~\ref{prop_1}. Thus, we have checked that under the conditions of the theorem,
the claim of Proposition~\ref{prop_1} holds true. To check that the claim of Proposition~\ref{prop_2} holds true as well, it suffices to check the first
assumption of that proposition (the second one being identical to \textbf{[C7]}). In fact, it is not difficult to check that the first assumption of
Proposition~\ref{prop_2} follows from \textbf{[C2]}, \textbf{[C4]} and \textbf{[C5]} for the sequence $\zeta_n^2=\min_{l\in\mcN_{n,\gamma}}q_l^2/4\sum_{l\in\mcN_{n,\gamma}}q_l^2$.

Therefore, combining the results of Proposition~\ref{prop_1} and \ref{prop_2}, we get that
\begin{align}\label{eq:27}
\gamma_n(\mcF_0,\mcF_1(r_{n,\gamma}^*),\widehat\phi_n^*)\le \Phi(-z_{1-\gamma/2})+ \Phi\big(z_{1-\gamma/2}-\inf_{f\in\mcF_1(r_{n,\gamma}^*)}\nolimits h_n[f,\widehat\bw_n^*]\big)+o(1).
\end{align}
In view of Proposition~\ref{prop_3}, the infimum over $f$ of $h_n[f,\widehat\bw_n^*]$ can be evaluated as follows:
\begin{align*}
\inf_{f\in\mcF_1(r_{n,\gamma}^*)}\nolimits h_n[f,\widehat\bw_n^*]
	& = \Big(\frac{m(m-1)}{2}\Big)^{1/2}\inf_{\substack{\ttheta\in\RR^\mcL:\sum_l c_l\theta_l^2\le 1\\ \sum_l q_l\theta_l^2\ge (r_{n,\gamma}^*)^2}}
			\sum_l \widehat w_{l,n}^*\theta_l^2\\
	& = \Big(\frac{m(m-1)}{2}\Big)^{1/2}\inf_{\substack{\bv\in\RR^\mcL_+:\langle \bv,\bc\rangle\le 1\\ \langle \bv,\bq\rangle \ge (r_{n,\gamma}^*)^2}}
			\langle \widehat\bw_{n}^*,\bv\rangle\\
	& = \Big(\frac{m(m-1)}{2}\Big)^{1/2}\|\bv^*\|_2\\
	& = \Big(\frac{m(m-1)}{2}\Big)^{1/2}\frac{\big(\sum_{l\in\mcN_{n,\gamma}} (T_{n,\gamma}q_l-c_l)^2\big)^{1/2}}{\sum_{l\in\mcN_{n,\gamma}}
			 c_l(T_{n,\gamma}q_l-c_l)}.
\end{align*}
Inserting this expression in (\ref{eq:27}) and using (\ref{eq:24}), we get that
\begin{align*}
\gamma_n(\mcF_0,\mcF_1(r_{n,\gamma}^*),\widehat\phi_n^*)
		&\le \Phi(-z_{1-\gamma/2})+ \Phi\big(z_{1-\gamma/2}-2z_{1-\gamma/2}+o(1)\big)+o(1)\\
		&= 2\Phi(-z_{1-\gamma/2})+o(1)=\gamma+o(1).
\end{align*}

\subsection{ Proof of Theorem  \ref{thm_2}}\label{proof_thm_2}

The proof of the lower bound follows the steps of~\citep{ingster2009minimax}. However, we considerably modified
the way some of these steps are carried out which allowed us to relax several assumptions and resulted in a shorter
proof.

Let us recall that $\ttheta[f] =(\theta_l[f])_{l\in\mcL}\in\ell_2(\mcL)$ is the array of Fourier coefficients of a function in $L_2(\Delta)$
w.r.t.\ the system $(\varphi_l)_{l\in\mathcal{L}}$. We introduce the sets $\Theta_1(\rho)=\big\{\ttheta\in \ell_2(\mcL) : \langle \bc,\ttheta^2\rangle\leq 1,\
\langle \bq,\ttheta^2\rangle\geq\rho^2 \big\}$ and $\Theta_0=\big\{\ttheta\in\ell(\mcL) : \langle \bc,\ttheta^2\rangle\leq 1,\
\langle \bq,\ttheta^2\rangle=0\big\}$, where we used the notation $\ttheta^2=\{\theta_l^2\}_{l\in\mcL}$.
Clearly, if $f$ belongs to the functional class $\mcF_1(\rho)$ (resp.\ $\mcF_0$) then $\ttheta[f]\in\Theta_1(\rho)$
(resp.\ $\ttheta[f]\in\Theta_0$).

Let $C<1$ be a constant. Our goal is to prove that $\gamma_n(\mcF_0,\mcF_1(Cr_{n,\gamma}^*))\ge \gamma+o(1)$.
To get this lower bound, we define prior measures that are essentially concentrated on the sets $\Theta_0$ and $\Theta_1$.
Let $\pi_n^1$ and $\pi_n^2$ be measures on the space $\ell_2(\mcL)$ such that $\pi_n^1(\Theta_0)=1+o(1)$ and $\pi_n^2(\Theta_1(Cr_{n,\gamma}^*))=1+o(1)$.
Those priors lead to the corresponding mixtures:
$$
P_{\pi_n^i}(A)=\int P_{\ttheta}(A)\pi_n^i(d\ttheta) \quad \text{ for every measurable set } A\subset (\Delta\times\RR)^n, \quad i=1,2.
$$
If $\gamma_n(P_{\pi_n^1},P_{\pi_n^2})=\inf_{\psi:(\Delta\times\RR)^n\to\{1,2\}} \big\{P_{\pi_n^1}(\psi=2)+P_{\pi_n^2}(\psi=1)\big\}$ is the minimal total error probability for testing the simple
null hypothesis $H_0 : P=P_{\pi_n^1}$ against the simple alternative $H_1 : P=P_{\pi_n^2}$, then we
have (see Proposition 2.11 in \cite{ingster2003nonparametric})
$$
\gamma_n\big(\mcF_0, \mcF_1(Cr_{n,\gamma}^*)\big)\geq \gamma_n(P_{\pi_n^1},P_{\pi_n^2})+o(1).
$$
As shows the next result, to get the desired lower bound,  it suffices to show that the Bayesian log-likelihood
$\log(dP_{\pi_n^2}/dP_{\pi_n^1})$ is asymptotically equivalent to a Gaussian log-likelihood.

\begin{lemma}[section 4.3.1 in  \cite{ingster2003nonparametric}]\label{lem:1}
If there exists a deterministic sequence $u_n$ and a sequence of random  variables $\eta_n$ such that under $P_{\pi_n^1}$-probability
$\eta_n$ converges in distribution to $\mcN(0,1)$ and
\begin{equation}\label{L_n}
\log (dP_{\pi_n^2}/dP_{\pi_n^1})=u_n\eta_n-\frac{u_n^2}{2}+o_P(1),
\end{equation}
then
$\gamma_n(P_{\pi_n^1},P_{\pi_n^2})\geq 2\Phi(-u_n/2)+o(1)$.
\end{lemma}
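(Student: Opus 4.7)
The plan is to start from the standard minimax formula for testing two simple hypotheses, namely
$$
\gamma_n(P_{\pi_n^1},P_{\pi_n^2})=\int \min(dP_{\pi_n^1},dP_{\pi_n^2})=E_{\pi_n^1}\big[\min(1,L_n)\big],
$$
where $L_n=dP_{\pi_n^2}/dP_{\pi_n^1}$ is the likelihood ratio. This is the Neyman--Pearson identity: the infimum over all tests of the sum of type I and type II errors is attained by a likelihood-ratio test and equals the above quantity. So the whole question reduces to a lower bound on $E_{\pi_n^1}[\min(1,L_n)]$.

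Next I would exploit the assumption. Writing $r_n=\log L_n-(u_n\eta_n-u_n^2/2)$, we have $r_n=o_P(1)$ under $P_{\pi_n^1}$. Pick any (sufficiently slowly vanishing) sequence $\epsilon_n\downarrow 0$ such that $P_{\pi_n^1}(|r_n|>\epsilon_n)\to 0$. Since $x\mapsto\min(1,e^x)$ is monotone, on the event $\{|r_n|\le\epsilon_n\}$,
$$
\min(1,L_n)\ge e^{-\epsilon_n}\min\!\big(1,\exp(u_n\eta_n-u_n^2/2)\big).
$$
Taking expectations and using that $\min(1,\cdot)\le 1$ to bound the exceptional event,
$$
E_{\pi_n^1}\big[\min(1,L_n)\big]\ge e^{-\epsilon_n}E_{\pi_n^1}\big[\min(1,e^{u_n\eta_n-u_n^2/2})\big]-o(1).
$$
The key remaining step is therefore to evaluate the expectation on the right-hand side to leading order.

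For a standard Gaussian $\eta\sim\mcN(0,1)$ and any real $u$, a direct computation using the identity $e^{u\eta-u^2/2}\varphi(\eta)=\varphi(\eta-u)$ (where $\varphi$ is the standard normal density) gives
$$
E\big[\min(1,e^{u\eta-u^2/2})\big]
=P(\eta\ge u/2)+\int_{-\infty}^{u/2}\varphi(\eta-u)\,d\eta
=\Phi(-u/2)+\Phi(-u/2)=2\Phi(-u/2).
$$
Hence, if one could substitute $\eta_n$ for $\eta$ in the expectation and $u_n$ for $u$ on the right-hand side, one would immediately get $E_{\pi_n^1}[\min(1,e^{u_n\eta_n-u_n^2/2})]= 2\Phi(-u_n/2)+o(1)$, and combining with the previous display and $\epsilon_n\downarrow 0$ would conclude the proof.

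The main obstacle is precisely this substitution, because $u_n$ is merely a deterministic sequence which need not be bounded, and the function $x\mapsto\min(1,e^{u_nx-u_n^2/2})$ depends on $n$. I would handle this by a subsequence argument: if the desired inequality $E_{\pi_n^1}[\min(1,e^{u_n\eta_n-u_n^2/2})]\ge 2\Phi(-u_n/2)+o(1)$ failed, there would be a subsequence along which the difference stays below some $-\delta<0$; extract a further subsequence along which $|u_n|\to u^*\in[0,\infty]$. If $u^*<\infty$, then $u_n\eta_n-u_n^2/2\to u^*\eta-(u^*)^2/2$ in distribution by Slutsky (using $\eta_n\Rightarrow\mcN(0,1)$), and bounded continuity of $x\mapsto\min(1,e^x)$ together with continuity of $u\mapsto 2\Phi(-u/2)$ yields a contradiction. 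If $u^*=\infty$, then $2\Phi(-u_n/2)\to 0$ while the expectation stays nonnegative, again a contradiction. This case analysis is the only delicate point; once it is in place the rest of the argument is mechanical.
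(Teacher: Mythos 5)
Your argument is correct. Note first that the paper does not prove this lemma at all: it is quoted verbatim from Section 4.3.1 of Ingster and Suslina (2003), so there is no in-paper proof to compare against. Your reconstruction is essentially the classical argument from that reference: reduce $\gamma_n(P_{\pi_n^1},P_{\pi_n^2})$ to $E_{\pi_n^1}[\min(1,L_n)]$ via the Neyman--Pearson identity, absorb the $o_P(1)$ remainder through the elementary inequality $\min(1,e^{x-\epsilon})\ge e^{-\epsilon}\min(1,e^{x})$, compute $E[\min(1,e^{u\eta-u^2/2})]=2\Phi(-u/2)$ for $\eta\sim\mcN(0,1)$ by the change of measure $e^{u\eta-u^2/2}\varphi(\eta)=\varphi(\eta-u)$, and transfer this to $\eta_n$ by a subsequence argument splitting on whether $u_n$ stays bounded. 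All steps check out; the subsequence device correctly handles the fact that the test function $x\mapsto\min(1,e^{u_nx-u_n^2/2})$ varies with $n$, which is the only genuinely delicate point. Two small remarks: (i) your identity $\{e^{u\eta-u^2/2}\ge 1\}=\{\eta\ge u/2\}$ and the statement itself implicitly require $u_n\ge 0$ (for $u_n<0$ the claimed bound would exceed $1$ and be false); this is harmless here since in the application $u_n=n\|\ba_n\|_2/\sqrt{2}\ge 0$, but it is worth stating. (ii) In the finite-limit case of the subsequence argument you should extract so that $u_n$ itself (not just $|u_n|$) converges, which is what you in effect use when invoking Slutsky. Neither point affects the validity of the proof.
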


For our purposes, we choose $\pi_n^1$ to be the Dirac measure in $\boldsymbol{0}$ and denote the
corresponding mixture probability $P_{\pi_n^1}$ by $P_0$. It is clear that with this choice $\pi_n^1(\Theta_0)=1$.
We now explain how $\pi_n^2$, that we will call $\pi_n$ from now on, is built. Let $\ba_n\in\RR_+^{\mcL}$ be an
array containing a finite number of nonzero elements. Let $\mcL(\ba_n)$ be the support of $\ba_n$, \textit{i.e.},
$a_l\not=0$ if and only if $l\in\mcL(\ba_n)$. We assume that $\mcL(\ba_n)\subset S_F$ and define $\pi_n(d\ttheta)$
as the Gaussian product measure such that under $\pi_n$ the entries $\theta_{l}$ are independent Gaussian with zero
mean and variance $a_l$.

\begin{proposition}\label{prop_4}
Let $\delta\in(0,1)$ be such that $1-\delta\ge C$.
Assume that $\ba_n=(1-\delta)\bv_n$ and, as $n\to\infty$, the following assumptions are fulfilled:
\begin{itemize}
\item[] {\bf [L1]} $\langle \bc,\bv_n\rangle\le 1$ and $\langle \bq,\bv_n\rangle\ge (r_{n,\gamma}^*)^2$,
\item[] {\bf [L2]} ${\max_{l\in\mcL(\bv_n)} (q_lv_l)}=o(\langle \bq,\bv\rangle)$ and ${\max_{l\in\mcL(\bv_n)} (c_lv_l)}=o(\langle \bc,\bv\rangle)$,
\item[] {\bf [L3]} $\|\bv_n\|_0\to \infty$ and  $n\|\bv_n\|_\infty^2 \|\bv_n\|_0^{2}\log \|\bv_n\|_0\to 0$,
\item[] {\bf [L4]} $n\|\bv_n\|_\infty \|\bv_n\|_0^{1/3}\to 0$ and $\|\bv_n\|_3=o(\|\bv_n\|_2)$.
\item[] {\bf [L5]} For some $L_5>0$, it holds $\sum_{l\in\mcL(\ba_n)}\varphi_l^2(\bt)\le L_5\|\ba_n\|_0$.
\end{itemize}
Then, as $n\to\infty$,
\begin{equation}\label{eq:20}
\gamma_n(\mcF_0,\mcF_1(Cr_{n,\gamma}^*))\geq 2\Phi\Big(-\frac{n(1-\delta)}{2\sqrt{2}}\|\bv_n\|_2\Big)+o(1).
\end{equation}
\end{proposition}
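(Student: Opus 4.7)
The overall strategy is to invoke Lemma~\ref{lem:1} with $\pi_n^1 = \delta_{\boldsymbol 0}$ (so that $P_{\pi_n^1} = P_0$ and the requirement $\pi_n^1(\Theta_0) = 1$ is automatic) and $\pi_n^2 = \pi_n$. Setting $u_n = n(1-\delta)\|\bv_n\|_2/\sqrt{2}$, so that $u_n/2$ is exactly the argument of $\Phi$ in~(\ref{eq:20}), two things remain to check: (i) $\pi_n(\Theta_1(Cr_{n,\gamma}^*)) = 1+o(1)$; and (ii) the expansion $\log(dP_{\pi_n}/dP_0) = u_n\eta_n - u_n^2/2 + o_P(1)$ with $\eta_n\xrightarrow{d}\mathcal N(0,1)$ under $P_0$. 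Combined with the inequality $\gamma_n(\mcF_0,\mcF_1(Cr_{n,\gamma}^*))\ge \gamma_n(P_0,P_{\pi_n})+o(1)$ recalled just before Lemma~\ref{lem:1}, these give exactly~(\ref{eq:20}).

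Step (i) is a short Chebyshev argument. Under $\pi_n$ the entries $\theta_l$ are independent Gaussians with variance $a_l = (1-\delta)v_{n,l}$, so $E\langle\bc,\ttheta^2\rangle = (1-\delta)\langle\bc,\bv_n\rangle$ and $E\langle\bq,\ttheta^2\rangle = (1-\delta)\langle\bq,\bv_n\rangle$. Assumption \textbf{[L1]} together with $1-\delta\ge C$ places both means strictly inside the constraints defining $\Theta_1(Cr_{n,\gamma}^*)$. Their variances $2(1-\delta)^2\sum_l c_l^2 v_{n,l}^2$ and $2(1-\delta)^2\sum_l q_l^2 v_{n,l}^2$ are bounded by $2(1-\delta)^2\max_l(c_l v_{n,l})\langle\bc,\bv_n\rangle$ and the analogue with $\bq$; both are negligible with respect to the square of the margin to the boundary thanks to \textbf{[L2]}, so Chebyshev concludes (i).

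For (ii), a Gaussian integration in $dP_{\pi_n}/dP_0 = \int\exp(\ttheta^{\!\top}\bY - \tfrac12\ttheta^{\!\top}M\ttheta)\prod_l\tfrac{1}{\sqrt{2\pi a_l}}e^{-\theta_l^2/(2a_l)}\,d\ttheta$, with $\bY_l = \sum_i x_i\varphi_l(\bt_i)$ and $M_{l,l'} = \sum_i\varphi_l(\bt_i)\varphi_{l'}(\bt_i)$ restricted to the support $\mcL(\ba_n)$, yields the closed form $\log L = -\tfrac12\log\det(I+AM) + \tfrac12\bY^{\!\top}(M+A^{-1})^{-1}\bY$ with $A = \mathrm{diag}(a_l)$. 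I would then replace $M$ by $nI$ on this block: a matrix-Bernstein estimate, using the uniform bound $\sum_{l\in\mcL(\ba_n)}\varphi_l^2(\bt)\le L_5\|\bv_n\|_0$ from \textbf{[L5]} and the cardinality control of \textbf{[L3]}, gives $\|M-nI\|_{\mathrm{op}} = O(\sqrt{n\|\bv_n\|_0\log\|\bv_n\|_0}) = o(n)$ with $P_0$-probability $1-o(1)$. Since moreover $na_l\le n\|\bv_n\|_\infty\to 0$ by \textbf{[L4]}, I can Taylor-expand both $\log\det$ and the resolvent around their diagonal approximations; summing the scalar identities $-\tfrac12\log(1+na_l) = -\tfrac{na_l}{2} + \tfrac{(na_l)^2}{4} + O((na_l)^3)$ and $\tfrac12\,a_l\bY_l^2/(1+na_l) = \tfrac{a_l\bY_l^2}{2} - \tfrac{na_l^2\bY_l^2}{2} + O((na_l)^3\bY_l^2)$ gives $\log L = \tfrac12\sum_l a_l(\bY_l^2 - n) - \tfrac{n^2}{4}\|\ba_n\|_2^2 + o_P(1)$, in which the deterministic term equals $-u_n^2/2$ exactly.

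The random part is analysed by decomposing $\bY_l^2 - n$ into a diagonal piece $\sum_i(\xi_i^2-1)\varphi_l^2(\bt_i)$, a design-fluctuation piece $\sum_i(\varphi_l^2(\bt_i)-1)$, and a bilinear piece $2\sum_{i<j}\xi_i\xi_j\varphi_l(\bt_i)\varphi_l(\bt_j)$. Summing against $a_l$ isolates the degenerate U-statistic $S_n = \sum_{1\le i<j\le n}\xi_i\xi_j K_{\ba_n}(\bt_i,\bt_j)$ with kernel $K_{\ba_n}(s,t) = \sum_l a_l\varphi_l(s)\varphi_l(t)$; a direct computation gives $\mathrm{Var}_0(S_n) = \binom{n}{2}\|\ba_n\|_2^2 = u_n^2(1+o(1))$, and the CLT of~\citet{Hall84} for degenerate U-statistics applies, the required Lyapunov-type moment bound reducing, via $\sup_\bt K_{\ba_n}(\bt,\bt)\le L_5\|\bv_n\|_\infty\|\bv_n\|_0$ from \textbf{[L5]}, to the two conditions in \textbf{[L4]}. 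The diagonal and design-fluctuation remainders, and the cubic Taylor tail $\sum_l(na_l)^3\bY_l^2$, are controlled in $L_2$ and shown to be $o_P(u_n)$ and $o_P(1)$ respectively by \textbf{[L3]} and \textbf{[L4]} (for the cubic tail one uses $n^3\|\bv_n\|_\infty^3\|\bv_n\|_0 = (n\|\bv_n\|_\infty\|\bv_n\|_0^{1/3})^3 \to 0$). Setting $\eta_n := S_n/u_n\xrightarrow{d}\mathcal N(0,1)$ then yields the decomposition required by Lemma~\ref{lem:1}. The main obstacle, in my view, will be the simultaneous control of the matrix concentration $\|M-nI\|_{\mathrm{op}}$ on a growing block of size $\|\bv_n\|_0$---this is precisely where the logarithmic factor in \textbf{[L3]} is needed---together with the cubic Taylor tail, whose balance with the other remainders is what dictates the precise forms of \textbf{[L3]} and \textbf{[L4]}.
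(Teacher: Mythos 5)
Your plan reproduces the architecture of the paper's proof: reduction to a simple-vs-simple problem via Lemma~\ref{lem:1} with $\pi_n^1=\delta_{\boldsymbol 0}$ and a Gaussian product prior $\pi_n$; Chebyshev concentration of $\pi_n$ on $\Theta_1(Cr_{n,\gamma}^*)$ using \textbf{[L1]}--\textbf{[L2]} (identical to Lemma~\ref{lem:2}); replacement of the empirical Gram matrix by $n\ttI_m$ via operator-norm concentration under \textbf{[L5]} and \textbf{[L3]} (the paper does this with an interpolation argument $g(z)=\log\det(\bar\ttR_n+z\ttB_n)$ and \cite[Cor.~5.52]{Versh}, which is the same estimate as your matrix-Bernstein bound); and a Taylor expansion in $na_l\to0$ producing $-u_n^2/2$ plus a centered random term. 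The one genuinely different step is the final CLT: the paper first diagonalizes exactly (via the SVD of $\ttA_n^{1/2}\Phi_n$, writing $\bx^\top(\ttR_n^{-1}-\ttI_n)\bx=\xxi^\top(\tilde\ttR_n^{-1}-\ttI_m)\xxi$ with $\xxi\sim\mcN_m(0,\ttI_m)$), so that after the diagonal approximation the random part is a sum of \emph{independent} $\chi^2$-type variables and a Lindeberg/Lyapunov CLT applies, with $\|\ba_n\|_3=o(\|\ba_n\|_2)$ giving the Lyapunov condition directly; you instead keep the observation-level statistics $\bY_l$ and extract the degenerate U-statistic $S_n$, invoking \citet{Hall84}. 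Both routes work and use the assumptions in the same places, but yours requires controlling extra remainders (the diagonal piece $\sum_i(\xi_i^2-1)K_{\ba_n}(\bt_i,\bt_i)$, the design fluctuation, and the fluctuation of the second-order Taylor term $\tfrac n2\sum_l a_l^2(\bY_l^2-n)$) that the exact diagonalization makes disappear; the paper's route buys a cleaner independent-sum CLT at the cost of the SVD change of variables. One small omission: in the setting of Theorem~\ref{thm_2} the alternative $\mcF_1$ is built from $\Sigma_{p,L}$, so one must also check that $\pi_n\big(\ttheta:\|\sum_l\theta_l\varphi_l\|_p>L\big)\to0$ (the paper does this with a Gaussian moment bound using \textbf{[L5]} and $\|\ba_n\|_\infty\|\ba_n\|_0\to0$ from \textbf{[L3]}); your Step~(i) only verifies the two ellipsoid/functional constraints. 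This is a short fix, not a flaw in the approach.
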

\begin{proof}
The proof of this proposition will be carried out with the help of several lemmas.
The fact that $\pi_n\big(\Theta_1(Cr_{n,\gamma}^*)\big)=1+o(1)$ is proved in the following lemma.
\begin{lemma}\label{lem:2}
Assume that $\ba_n=(1-\delta)\bv_n$ satisfies {\bf [L1]} and {\bf [L2]}.
Then, for every $\delta \in (0,1)$, it holds that $\pi_n\big(\Theta_1(Cr_n^*)\big)=1+o(1)$.
\end{lemma}
\begin{proof}
Let us denote  $\mathcal{H}_1(\ttheta)=\sum_{l\in\mathcal{L}} q_l \theta_l^2$ and $\mathcal{H}_2(\ttheta)=\sum_{l\in\mathcal{L}}  c_l\theta_l^2$.
In view of {\bf [L1]}, we have
$$
\int \mathcal{H}_1(\ttheta)\pi_n(d\ttheta)=\sum_{l\in\mathcal{L}} q_l a_l\ge (r_{n,\gamma}^*)^2(1-\delta),\qquad
\int \mathcal{H}_2(\ttheta)\pi_n(d\ttheta)=\sum_{l\in\mathcal{L}} c_l a_l\le 1-\delta.
$$
On the other hand, since the variance of the sum of independent random variables equals the sum
of the variances of these random variables, we get
\begin{align*}
\int\mathcal{H}_1(\ttheta)^2\pi_n(d\ttheta)-\Big(\int\mathcal{H}_1(\ttheta)\pi_n(d\ttheta)\Big)^2
		& = 2\sum_{l\in\mathcal{L}} q_l^2 a_l^2\le 2\langle \bq, \ba_n\rangle\max_{l\in\mcL(\ba_n)} (q_l a_l).
\end{align*}
By Tchebychev's inequality, we arrive at
\begin{align*}
\pi_n\big(\ttheta:\mcH_1(\ttheta)< (C r_{n,\gamma}^*)^2\big)
        &\le \frac{2\max_{l\in\mcL(\bv_n)} (q_l v_l)}{C^2(1-C)^2 \langle \bq, \bv_n\rangle},\\
\pi_n\big(\ttheta:\mcH_2(\ttheta)>1\big)
        &\le \frac{2\max_{l\in\mcL(\bv_n)} (c_l v_l)}{\delta^2\langle \bc, \bv_n\rangle}.
\end{align*}
The claim of the lemma follows now from condition {\bf [L2]}.
\end{proof}

Second, we show that for every $p>2$ and every $L>0$, the probability $\pi_n(\ttheta: \|\sum_l \theta_l\varphi_l\|_p>L)$ tends to zero.
Indeed, in view of the Tchebychev inequality and Fubini's theorem,
\begin{align*}
\pi_n\Big(\ttheta: \Big\|\sum_l\nolimits \theta_l\varphi_l\Big\|_p>L\Big) & \le L^{-p} \int_\Delta E_{\pi_n}\bigg[\bigg|\sum_{l} \theta_l\varphi_l(\bt)\bigg|^p\bigg]\,d\bt.
\end{align*}
Using the fact that for every fixed $\bt$, the random variable $\sum_{l} \theta_l\varphi_l(\bt)$ is Gaussian with zero mean and
variance $\sum_l a_l \varphi_l^2(\bt)$, we get
\begin{align*}
\pi_n\Big(\ttheta: \Big\|\sum_l\nolimits \theta_l\varphi_l\Big\|_p>L\Big) & \le p! L^{-p} \int_\Delta \bigg|\sum_{l} a_l\varphi_l^2(\bt)\bigg|^{p/2}\,d\bt
\le p! L_5^{p/2} L^{-p} (\|\ba_n\|_\infty\|\ba_n\|_0)^{p/2}.
\end{align*}
The last expression tends to zero as $n\to\infty$ in view of condition \textbf{[L3]}.

We focus now on the proof of (\ref{L_n}). Set $m = |\mcL(\ba_n)|$ and let $\Phi_n$ be the $m\times n$ matrix having as generic element
$(\Phi_n)_{li}=\varphi_l(\bt_i)$.  Let $\ttA_n$ be $m\times m$ diagonal matrix having the nonzero entries
of $\ba_n$ on its main diagonal. It is clear that under $P_{\pi_n}$, conditionally to $\mcT_n$, $\bx=(x_1,\ldots,x_n)^\top$
is distributed according to a multivariate Gaussian distribution with zero mean and $n\times n$ covariance matrix
$\ttR_n= \Phi_n^\top\ttA_n\Phi_n+\ttI_n$. Therefore, the logarithm of its density w.r.t.\ $P_0$ is given by
$$
\log\Big(\frac{dP_{\pi_n}}{dP_0}(\bx;\bt_1,\ldots,\bt_n)\Big)=-\frac12\big(\log\det\ttR_n+\bx^\top(\ttR_n^{-1}-\ttI_n)\bx\big).
$$
In what follows, we denote by $\matnorm{\ttM}=\sup_{\|\bx\|_2=1} \|\ttM\bx\|_2$ the spectral norm of a matrix $\ttM$.
\begin{lemma}
Let $\bar\ttR_n=n\ttA_n+\ttI_m$ and $m=m_n\to\infty$. If $n^2\|\ba_n\|^2_\infty\|\ba_n\|_0\matnorm{\frac1n\Phi_n\Phi_n^\top-\ttI_m}^2=o_P(1)$
and $|\Tr[\bar\ttR_n^{-1}\ttB_n]|+E[|\xxi^\top\bar\ttR_n^{-1}\ttB_n\bar\ttR_n^{-1}\xxi|]=o_P(1)$, then under $P_0$ it holds $\log\big({dP_{\pi_n}}/{dP_0}\big)
= -\frac12\big(\log\det\bar\ttR_n+\xxi^\top(\bar\ttR_n^{-1}-\ttI_m)\xxi\big)+o_P(1)$, where $\xxi\sim\mcN_m(0,\ttI_m)$.
\end{lemma}

\begin{proof}
Let us denote $\tilde\ttR_n=\ttA_n^{1/2}\Phi_n\Phi_n^\top\ttA_n^{1/2}+\ttI_m$, $\ttB_n=\tilde\ttR_n-\bar\ttR_n$ and introduce
the function $g(z)=\log\det(\bar\ttR_n+z\ttB_n)$ for $z\in[0,1]$.  One easily checks that
$g(1)=\log\det\tilde\ttR_n=\log\det\ttR_n$, $g(0)=\log\det\bar\ttR_n$ and $g'(z)=\Tr[(\bar\ttR_n+z\ttB_n)^{-1}\ttB_n]$.
Therefore, the relation $g(1)-g(0)=g'(\bar z)$ for some $\bar z\in[0,1]$ implies
\begin{align*}
|\log\det\ttR_n-\log\det\bar\ttR_n |
		&=|\Tr[(\bar\ttR_n+\bar z\ttB_n)^{-1}\ttB_n]|\\
		&\le |\Tr[\bar\ttR_n^{-1}\ttB_n]|+ m \matnorm{(\bar\ttR_n+\bar z\ttB_n)^{-1}-\bar\ttR_n^{-1}}\matnorm{\ttB_n}.
\end{align*}
Using the identity $(\bar\ttR_n+\bar z\ttB_n)^{-1}-\bar\ttR_n^{-1}= -\bar z (\bar\ttR_n+\bar z\ttB_n)^{-1}\ttB_n\bar\ttR_n^{-1}$, we get
\begin{align*}
|\log\det\ttR_n-\log\det\bar\ttR_n |
		&\le |\Tr[\bar\ttR_n^{-1}\ttB_n]|+ m \matnorm{(\bar\ttR_n+\bar z\ttB_n)^{-1}}\matnorm{\bar\ttR_n^{-1}}\matnorm{\ttB_n}^2\\
        &\le |\Tr[\bar\ttR_n^{-1}\ttB_n]|+ m \matnorm{\ttB_n}^2,
\end{align*}
where we used that $\bar\ttR_n$ and $\bar\ttR_n+\bar z\ttB_n=\ttI_m+\bar z\ttA_n^{1/2}\Phi_n\Phi_n^\top\ttA_n^{1/2}+(1-\bar z)n\ttA_n$ have all
their eigenvalues $\ge 1$.
On the other hand, one can check that $\matnorm{\ttB_n}\le n\matnorm{\ttA_n}\matnorm{\frac1n\Phi\Phi^\top-\ttI_m}$. Combining these
inequalities with the facts $\matnorm{\ttA_n}=\|\ba_n\|_\infty$ and $m=\|\ba_n\|_0\to \infty$ we arrive at
$\log\det\ttR_n=\log\det\bar\ttR_n +o_P(1)$.

The term $\bx^\top\ttR_n^{-1}\bx$ is dealt with similarly. First, using the singular values decomposition of the matrix $\ttA_n^{1/2}\Phi_n$,
one can note that for an appropriately chosen vector $\xxi\sim\mcN_m(0,\ttI_m)$, it holds that
$\bx^\top(\ttR_n^{-1}-\ttI_n)\bx=\xxi^\top(\tilde\ttR_n^{-1}-\ttI_m)\xxi$. Then, we introduce the function
$\bar g(z)=\xxi^\top[\bar\ttR_n+z\ttB_n]^{-1}\xxi$, the derivative of which is given by $g'(z)=-\xxi^\top(\bar\ttR_n+z\ttB_n)^{-1}\ttB_n(\bar\ttR_n+z\ttB_n)^{-1}\xxi$. Therefore, for some $\bar z\in[0,1]$,
\begin{align*}
|\xxi^\top\tilde\ttR_n^{-1}\xxi-\xxi^\top\bar\ttR_n^{-1}\xxi|
		&=|\xxi^\top(\bar\ttR_n+\bar z\ttB_n)^{-1}\ttB_n(\bar\ttR_n+\bar z\ttB_n)^{-1}\xxi|\\
        &\le |\xxi^\top\bar\ttR_n^{-1}\ttB_n\bar\ttR_n^{-1}\xxi|+|\xxi^\top[(\bar\ttR_n+\bar z\ttB_n)^{-1}-\bar\ttR_n]^{-1}\ttB_n(\bar\ttR_n+\bar z\ttB_n)^{-1}\xxi|\\
        &\qquad+|\xxi^\top[(\bar\ttR_n+\bar z\ttB_n)^{-1}-\bar\ttR_n]^{-1}\ttB_n\bar\ttR_n^{-1}\xxi|\\
        &\le |\xxi^\top\bar\ttR_n^{-1}\ttB_n\bar\ttR_n^{-1}\xxi|+2\|\xxi\|_2^2\matnorm{\ttB_n}^2.
\end{align*}
It is well-known that $\|\xxi\|_2^2$ being distributed according to the $\chi^2_m$ distribution is $O_P(m)$, as $m\to\infty$. This completes the
proof of the lemma.
\end{proof}
According to \cite[Cor.~5.52]{Versh}, under {\bf [C3]}, we have $\matnorm{\frac1n\Phi_n\Phi_n^\top-\ttI_m}\le C(\frac{m\log m}{n})^{1/2}$
with probability at least $1-1/n$. Furthermore, using the facts that the $\bar\ttR_n$ is a diagonal matrix with diagonal entries $\ge 1$ and that the
variance of the sum of independent random variables equals the sum of variances,
one readily checks that $E|\Tr[\bar\ttR_n^{-1}\ttB_n]|^2+E[|\xxi^\top\bar\ttR_n^{-1}\ttB_n\bar\ttR_n^{-1}\xxi|^2]\le 3C_3^2 n \|\bv_n\|^2_\infty\|\bv_n\|^2_0$.
Hence, condition {\bf [L3]} implies that the two conditions of the last lemma are fulfilled and, therefore, its
claim holds true. Using the fact that $\ttA_n$ is diagonal, we get
\begin{align}
\log\big({dP_{\pi_n}}/{dP_0}\big)
		&= \frac12\sum_l\Big(\frac{na_l\xi_l^2}{na_l+1}-\log(na_l+1)\Big)+o_P(1)\nonumber\\
		&= \frac12\sum_l\Big(\frac{na_l}{na_l+1}-\log(na_l+1)\Big)+\sum_l\frac{na_l(\xi_l^2-1)}{2(na_l+1)}+o_P(1).\label{eq:18}
\end{align}
\begin{lemma}
Let us denote
$$
u_n=\frac{n\|\ba_n\|_2}{\sqrt2},\quad \eta_n=\frac1{u_n}\sum_{l\in\mcL}\frac{na_l(\xi_l^2-1)}{2(na_l+1)}.
$$
If the conditions $mn^3\|\ba_n\|_\infty^3\to 0$, and $\|\ba_n\|_3=o(\|\ba_n\|_2)$ are fulfilled, then $\eta_n$ converges in distribution to $\mcN(0,1)$ and
\begin{align}\label{eq:21}
\frac12\sum_{l\in\mcL}\Big(\frac{na_l}{na_l+1}-\log(na_l+1)\Big)+\sum_{l\in\mcL}\frac{na_l(\xi_l^2-1)}{2(na_l+1)}= u_n\eta_n-\frac{u_n^2}{2}+o(1).
\end{align}
\end{lemma}

\begin{proof}
Since $n\|\ba\|_\infty\to 0$, we have $\frac{na_l}{na_l+1}=na_l-(na_l)^2+O((na_l)^3)$ and $\log(na_l+1)=na_l-\frac{(na_l)^2}2+O((na_l)^3)$.
This implies that $\sum_{l\in\mcL}\big(\frac{na_l}{na_l+1}-\log(na_l+1)\big)=-\frac12 u_n^2+O(mn^3\|\ba_n\|_\infty^3)$. On the other hand, using
the central limit theorem for triangular arrays, we get the weak convergence of $\eta_n$ to $\mcN(0,1)$ provided that
$u_n^{-3}\sum_l (na_l)^3/(na_l+1)^3$ tends to zero. Since under the conditions of the lemma this convergence trivially holds, we get the
claim of the lemma.
\end{proof}
Combining Lemma~\ref{lem:1} with (\ref{eq:18}) and (\ref{eq:21}), we get (\ref{eq:20}) and the proposition follows.
\end{proof}

To complete the proof of Theorem~\ref{thm_2}, we shall show now that if we choose $T_{n,\gamma}$ as in Theorem~\ref{thm_1}
and define $\bv_n$ by
$$
v_l=v_{l,n}=\frac{(T_{n,\gamma}q_l-c_l)_+}{\sum_{l\in\mcL} c_l(T_{n,\gamma}q_l-c_l)_+},
$$
then all the conditions of Proposition~\ref{prop_4} are fulfilled. We start by noting that {\bf [L1]} is
straightforward. To check the first relation in {\bf [L2]}, we use {\bf [C1]} and $|\mcN(T_{n,\gamma})|\to\infty$, along
with the following evaluations:
\begin{align*}
\forall l\in\mcN(T_{n,\gamma}),\qquad\frac{q_lv_l}{\langle \bq,\bv\rangle}
         & = \frac{q_l(T_{n,\gamma}q_l-c_l)}{\sum_l q_l(T_{n,\gamma}q_l-c_l)_+}\le \frac{q_l^2}{\sum_l (q_l-\frac{c_l}{T_{n,\gamma}})_+^2}
          \le \frac{C_1}{|\mcN(T_{n,\gamma})|}.
\end{align*}
For the second relation in {\bf [L2]}, in view of (\ref{eq:24}), $\forall l\in\mcN(T_{n,\gamma})$ we have
\begin{align*}
\frac{c_lv_l}{\langle \bc,\bv\rangle}
         & = \frac{c_l(T_{n,\gamma}q_l-c_l)}{\sum_l c_l(T_{n,\gamma}q_l-c_l)_+}
         \le \frac{T_{n,\gamma}c_lq_l}{\sum_l c_l(T_{n,\gamma}q_l-c_l)_+}\\
         &\le \frac{T_{n,\gamma}c_lq_l\; O(1)}{nT_{n,\gamma}\big(\sum_l (q_l-\frac{c_l}{T_{n,\gamma}})_+^2\big)^{1/2}}
          \le \frac{\max_{l\in\mcN(T_{n,\gamma})}c_l}{n|\mcN(T_{n,\gamma})|^{1/2}}\; O(1).
\end{align*}
The last term tends to zero due to {\bf [C9]}.
From the definition of $\bv_n$, equation (\ref{eq:24}) and condition {\bf [C1]} one can deduce that
\begin{align*}
\|\bv_n\|_\infty
	&= \frac{\max_l (T_{n,\gamma}q_l-c_l)_+}{\sum_l c_l(T_{n,\gamma}q_l-c_l)_+}\le \frac{T_{n,\gamma}\max_l q_l}
			{n\big(\sum_l(T_{n,\gamma}q_l-c_l)_+^2\big)^{1/2}}O(1)\\
	&\le\frac{\max_l q_l}{n |\mcN(T_{n,\gamma})|^{1/2}\max_l q_l}O(1)= \frac{O(1)}{n |\mcN(T_{n,\gamma})|^{1/2}}. 		
\end{align*}
This inequality yields $n\|\bv_n\|_\infty^2\|\bv_n\|_0^2=O(|\mcN(T_{n,\gamma})|/n)$. Therefore, {\bf [L3]} follows from
{\bf [C8]}. Finally, to check that  {\bf [L4]} is true, we notice that	
$n\|\bv_n\|_\infty\|\bv_n\|_0^{1/3}=O(|\mcN(T_{n,\gamma})|^{\frac13-\frac12})=o(1)$
and
$$
\frac{\|\bv_n\|_3^3}{\|\bv_n\|_2^3} =\frac{\sum_l (T_{n,\gamma}q_l-c_l)_+^3}{\big(\sum_l (T_{n,\gamma}q_l-c_l)_+^2\big)^{3/2}}
\le \frac{\max_l q_l}{\big(\sum_l (q_l-\frac{c_l}{T_{n,\gamma}})_+^2\big)^{1/2}}\le \frac{C_1^{1/2}}{|\mcN(T_{n,\gamma})|^{1/2}}.
$$
Thus, all the conditions of Proposition~\ref{prop_4} are fulfilled and, therefore,
$$
\gamma_n(\mcF_0,\mcF_1(Cr_{n,\gamma}^*))\geq 2\Phi\Big(-\frac{n(1-\delta)}{2\sqrt{2}}\|\bv_n\|_2\Big)+o(1).
$$
Since this equation is true for every $\delta\in (0,1-C)$, it is also true for $\delta=0$, and
the claim of Theorem~\ref{thm_2} follows from (\ref{eq:24}).

\section{Proofs of lemmas and propositions of Section~\ref{sec:examples}}

\subsection{Proof of Lemma~\ref{lem:3}}

Let us write $\pf=\Pi_1f+\Pi_2f$, where $\Pi_1$ and $\Pi_2$ are the orthogonal projectors in $L_2(\Delta)$
onto the subspaces $\text{span}\{\varphi_l:l\in\mcN_1(T)\}$ and $\text{span}\{\varphi_l:l\in\mcN_2(T)\}$, respectively.
We first assume that the inequality $\sum_l c_l^{-1}<\infty$ is fulfilled.

On the one hand, using the Cauchy-Schwarz inequality,
\begin{align*}
\|\Pi_2f\|_4^4 & = \int_\Delta \Big(\sum_{l\in\mcN_2(T)}\nolimits\theta_l[f]\varphi_l(\bt)\Big)^4\,d\bt
				\le 2^{2d}\Big(\sum_{l\in\mcN_2(T)}\nolimits|\theta_l[f]|\Big)^4\\
		&\le 2^{2d}\Big(\sum_{l\in\mcN_2(T)}\nolimits c_l\theta_l[f]^2\Big)^2\Big(\sum_{l\in\mcN_2(T)}\nolimits c_l^{-1}\Big)^2
		\le 2^{2d}\Big(\sum_{l\in\mcN_2(T)}\nolimits c_l^{-1}\Big)^2.
\end{align*}
On the other hand,
\begin{align*}
\|\Pi_1f-\proj\|_4^4
		&= \int_\Delta\bigg(\sum_{l\in\mcN_1(T)} \big(\widehat\theta_l-\theta_l[f]\big)\varphi_l(\bt)\bigg)^4d\bt\\
		&= \int_\Delta\bigg(\frac1n\sum_{i=1}^n\sum_{l\in\mcN_1(T)} \big(x_i\varphi_l(\bt_i)-\theta_l[f]\big)\varphi_l(\bt)\bigg)^4d\bt.
\end{align*}
Using Fubini's theorem and Rosenthal's inequality, for some constant $C>0$, we get
\begin{align*}
E_f\|\Pi_1f-\proj\|_4^4
		&\le \frac{C}{n^4}\int_\Delta \sum_{i=1}^n E_f\bigg(\sum_{l\in\mcN_1(T)} \big(x_i\varphi_l(\bt_i)-\theta_l[f]\big)\varphi_l(\bt)\bigg)^4d\bt\\
		&\qquad+\frac{C}{n^4}\int_\Delta \bigg\{\sum_{i=1}^n E_f\bigg(\sum_{l\in\mcN_1(T)} \big(x_i\varphi_l(\bt_i)-\theta_l[f]\big)\varphi_l(\bt)\bigg)^2\bigg\}^2d\bt
\end{align*}
By H\"older's inequality, we get
\begin{align*}
E_f\bigg(\sum_{l\in\mcN_1(T)} \big(x_i\varphi_l(\bt_i)-\theta_l[f]\big)\varphi_l(\bt)\bigg)^4
	&\le |\mcN_1(T)|^3 \sum_{l\in\mcN_1(T)} E_f \big(x_i\varphi_l(\bt_i)-\theta_l[f]\big)^4\varphi_l(\bt)^4\\
	&\le 2^{2d}|\mcN_1(T)|^3 \sum_{l\in\mcN_1(T)} E_f \big(f(\bt_i)\varphi_l(\bt_i)+\xi_i\varphi_l(\bt_i)-\theta_l[f]\big)^4\\
	& = O(|\mcN_1(T)|^4),
\end{align*}
where we used the fact that $E[\xi^4]<\infty$ and that $E[f(t_i)^4]\le 2^{2d}(\sum_l c_l^{-1})^2<\infty$ under the conditions of the lemma.
Similar arguments lead to
$$
\int_\Delta \bigg\{\sum_{i=1}^n E_f\bigg(\sum_{l\in\mcN_1(T)} \big(x_i\varphi_l(\bt_i)-\theta_l[f]\big)\varphi_l(\bt)\bigg)^2\bigg\}^2d\bt
=O(n^2|\mcN_1(T)|^4),
$$
which implies that $E_f\|\Pi_1f-\proj\|_4^4=O(|\mcN_1(T)|^4/n^2)$. Combining the obtained evaluations, we get
$$
E_f\|\Pi f-\proj\|_4^4\le \frac{|\mcN_1(T)|^4}{n^2}+C\Big(\sum_{l:c_l>T}\nolimits c_l^{-1}\Big)^2.
$$
The required consistency follows from the assumption $|\mcN_1(T_n)|=o(n^{1/2})$.

Let us consider the case $\Sigma\subset W_2^\ssigma(R)$. Without loss of generality, we will assume that
$\Sigma= W_2^\ssigma(R)$ and $c_l=\sum_{i=1}^d (2\pi l_i)^{2\sigma_i}/R^2$. The computations remain the same
as in the previous case but the term $\|\Pi_2 f\|_4^4$ is bounded using Sobolev inequality \citep{Kolyada}.
Indeed, choosing $\ssigma'$ so that $\sigma'_i=(1-\tau)\sigma_i$ and $\tau< 1-d/(4\bar\sigma)$ (this implies that
$\bar\sigma'>d/4$), we get
\begin{align*}
\|\Pi_2f\|_4^2 & \le C \|\Pi_2 f\|_{W_2^{\ssigma'}}^2 = C \bigg[\sum_{\bl\in\mcN_2(T)} \sum_{i=1}^d (2\pi l_i)^{2\sigma_i'}\theta_\bl[f]^2 \bigg]
	\le C \bigg[\sum_{\bl\in\mcN_2(T)} d^\tau\big(c_lR^2\big)^{1-\tau}\theta_\bl[f]^2 \bigg]\\
	&\le C(d/T)^{\tau}R^{2(1-\tau)}\bigg[\sum_{\bl\in\mcN_2(T)} c_l\theta_\bl[f]^2 \bigg]
	\le C (d/T)^{\tau}R^{2(1-\tau)}.
\end{align*}
This completes the proof, since the last term tends to zero as $T\to\infty$.

\subsection{Proof of Lemma~\ref{lem:4}}

Let us introduce $\Pi_J f=\sum_{\bk\in [1,2^J]^d}\nolimits\alpha_{J,\bk}\varphi_{J,\bk}$.
We first decompose the empirical coefficients as follows:
$$
\widehat{\alpha}_{J,\bk}=\frac{1}{n}\sum_{i=1}^n \varphi_{J,\bk}(\bt_i)x_i=\frac{1}{n}\sum_{i=1}^n \varphi_{J,\bk}(\bt_i)f(\bt_i)+\frac{1}{n}\sum_{i=1}^n \varphi_{J,\bk}(\bt_i)\xi_i:=\tilde{\alpha}_{J,\bk}+\epsilon_{j,\bk}.
$$
Then, using standard arguments, we have
$$
\Big\| \pf-\proj\Big\|_4^4\leq 3^3\Big(\Big\|\sum_{\bk\in [1,2^J]^d}(\alpha_{J,\bk}-\tilde{\alpha}_{J,\bk})\varphi_{J,\bk}\Big\|_4^4+ \Big\|\sum_{\bk\in [1,2^J]^d}\epsilon_{J,\bk}\varphi_{J,\bk}\Big\|_4^4+\Big\|\pf-\Pi_Jf\Big\|_4^4\Big)
$$
with $\big\|\pf-\Pi_Jf\big\|_4^4=O(2^{-4J\sigma})$. Furthermore, by well-known properties of wavelet bases \citep{cohen2003numerical}
and the Rosenthal inequality,
$$
E_f\Big\|\sum_{\bk\in [1,2^J]^d}(\alpha_{J,\bk}-\tilde{\alpha}_{J,\bk})\varphi_{J,\bk}\Big\|_4^4=O(2^{Jd})\sum_{\bk} E_f(\alpha_{J,\bk}-\tilde{\alpha}_{J,\bk})^4=O\bigg(\frac{2^{2Jd}}{n^2}\bigg)
$$
and
\begin{align*}
E\Big\|\sum_{\bk\in [1,2^J]^d}\epsilon_{J,\bk}\varphi_{J,\bk}\Big\|_4^4
&=O(2^{Jd})\sum_{\bk} E[\epsilon_{J,\bk}^4]=O(2^{Jd})\sum_{\bk}E\Big(\frac{1}{n^2}\sum_{i=1}^n \varphi^2_{J,\bk}(\bt_i)\Big)^2\\
&=2^{2Jd} O\Big(\frac{1}{n^2}+\frac{2^{Jd}}{n^3}\Big).
\end{align*}
Finally we obtain, uniformly over $f\in\Sigma$,
$E_f\|\pf-\proj\|^4= O\big(\frac{2^{2Jd}}{n^2}+\frac{2^{3Jd}}{n^3}+2^{-4J\sigma}\big)$, and the announced result follows.

\section{Proof of Proposition~\ref{premierex}}

We are going to check that all the assumptions of Theorem~\ref{thm_1} and Theorem~\ref{thm_2} are satisfied.
We can use the Sobolev embedding theorem \citep{Kolyada} for \textbf{[C7]}: if $\bar\sigma>d/4$, then \textbf{[C7]} is satisfied.
For the pilot estimator proposed in subsection~\ref{ssec:3.15}, \textbf{[C6]}
holds as well. Since the Fourier basis is uniformly bounded, checking \textbf{[C3]}  is straightforward.

Let now $T_{n,\gamma}= (C_\gamma^*r_n^*)^{-2}(1+2\kappa^{-1})$, where $r_{n}^*$ and $C_\gamma^*$ are defined in Proposition~\ref{premierex}.
We will show that
\begin{itemize}
\item $T_{n,\gamma}$ satisfies (\ref{eq:24}),
\item $r_{n,\gamma}^*$ defined by (\ref{eq:25}) satisfies $r_{n,\gamma}^*\sim C_\gamma^*r_n^*$,
\item conditions \textbf{[C1]}, \textbf{[C2]}, \textbf{[C5]}, \textbf{[C8]} and \textbf{[C9]} are fulfilled.
\end{itemize}
To this end,  we need an asymptotic analysis of the terms
$$
I_0(T)= \sum_{\bl\in\ZZ^d} \Big(q_\bl-\frac{c_\bl}{T}\Big)_+^2,\qquad I_1(T) = \sum_{\bl\in\ZZ^d} q_\bl\Big(q_\bl-\frac{c_\bl}{T}\Big)_+
$$
and $I_2(T)=I_1(T)-I_0(T)$. For the first one, it holds that
\begin{align*}
I_0(T)&=\sum_{\bl\in\ZZ^d} \Big( \prod_{j=1}^{d}(2\pi l_j)^{2\alpha_j}- \sum_{i=1}^d \frac{(2\pi l_i)^{2\sigma_i}}{T}\Big)^2_+.
\end{align*}
For every $i\in\{1,\ldots, d\}$, we set
$$
m_i=\frac{T^{\gamma_i}}{2\pi},\quad \gamma_i=\frac{1}{2\sigma_i(1-\delta)}\ \text{ and } \ x_{\bl,i}=\frac{2\pi l_i}{T^{\gamma_i}}=\frac{l_i}{m_i}.
$$
 Note that, as $\delta<1$, we have $\gamma_i>0$. With this notation,
\begin{align*}
I_0(T)&=T^{\frac{2\delta}{1-\delta}}m_1\cdot\ldots\cdot m_d \sum_{\bl\in\ZZ^d} \Big(  \prod_{j=1}^{d}|x_{\bl,j}|^{2\alpha_j}
    -\sum_{i=1}^d |x_{\bl,i}|^{2\sigma_i}\Big)^2_+/(m_1\cdot\ldots\cdot m_d).
\end{align*}
As $m_i\to\infty$ for every $i$, we can replace the sums by integrals
$$
I_0(T)\sim \frac{T^{\frac{4\delta\bar\sigma+d}{2(1-\delta)\bar\sigma}}}{(2\pi)^d}
\int_{\sum_{i=1}^d |x_j|^{2\sigma_j}<\prod_{j=1}^{d}|x_j|^{2\alpha_j} }
\Big(\prod_{j=1}^{d}|x_j|^{2\alpha_j} -\sum_{i=1}^d |x_i|^{2\sigma_i}\Big)^2d\bx.
$$
Next, we make the change of variables $y_j=x_j^{2\sigma_j}$, $j=1,\ldots, d$ and
set $\mcD=\big\{\by\in\RR_+^d : \sum_{j=1}^d y_j<\prod_{i=1}^{d} y_i^{{\alpha_i}/{\sigma_i}}\big\}$.
We get
\begin{align*}
I_0(T)&\sim  \frac{T^{\frac{4\delta\bar\sigma+d}{2(1-\delta)\bar\sigma}}}{\pi^d\sigma_1\ldots \sigma_d}\int_\mcD \Big( \prod_{i=1}^{d}y_i^{\frac{\alpha_i}{\sigma_i}}
-\sum_{j=1}^d y_j\Big)^2 y_1^{\frac{1}{2\sigma_1}-1}\ldots y_d^{\frac{1}{2\sigma_d}-1} d\by.
\end{align*}
Now, we make another change of variables: $z_i={y_i}\big({\prod_{j=1}^{d}y_j^{{\alpha_j}/{\sigma_j}}}\big)^{-1}$.
Note that  $\prod_{i=1}^{d}z_i^{{\alpha_i}/{\sigma_i}}=\big(\prod_{i=1}^{d}y_i^{{\alpha_i}/{\sigma_i}}\big)^{1-\delta}$.
Therefore, using the notation $\Sigma_d=\big\{ \bz\in\RR_+^d : \|\bz\|_1 \leq 1\big\}$,
$$
I_0(T)\sim  \frac{T^{\frac{4\delta\bar\sigma+d}{2(1-\delta)\bar\sigma}}}{\pi^d\sigma_1\ldots \sigma_d}\int_{\Sigma_d}\Big(\prod_{i=1}^{d}z_i^{\frac{\alpha_i}{\sigma_i}}\Big)^{\frac{4\bar\sigma+d-2d\bar\sigma}{2\bar\sigma(1-\delta)}}(1-\|\bz\|_1)^2
z_1^{\frac{1}{2\sigma_1}-1}\ldots z_d^{\frac{1}{2\sigma_d}-1}\Delta(\bz)\; d\bz,
$$
where $\Delta(\bz)$ is the Jacobian. Standard algebra yields $\Delta(\bz)=\big(\prod_{i=1}^{d}z_i^{{\alpha_i}/{\sigma_i}}\big)^{d/(1-\delta)}/(1-\delta)$.
Next we give an explicit form for this integral $I_0(T)\sim {\pi^{-d}}{T^{\frac{4\delta\bar\sigma+d}{2(1-\delta)\bar\sigma}}} I$, 	
where
\begin{align*}
I&= \frac{1}{ \big(\prod_{i=1}^{d}\sigma_i\big)(1-\delta)}\int_{\Sigma_d}  \Big( \prod_{i=1}^{d}z_i^{\frac{\alpha_i}{\sigma_i}}\Big)^{\frac{4\bar\sigma+d}{2\bar\sigma(1-\delta)}}
(1-\|\bz\|_1)^2 z_1^{\frac{1}{2\sigma_1}-1}\ldots z_d^{\frac{1}{2\sigma_d}-1}d\bz.
\end{align*}
Now, the Liouville formula (see, for instance, \cite{ingster2011estimation}) combined with the well-known identity
$\int_0^1 u^{\alpha-1}(1-u)^{\beta-1}\,du=\Gamma(\alpha)\Gamma(\beta)/\Gamma(\alpha+\beta)$ yields
\begin{align*}
I&= \frac{\prod_{i=1}^{d}\Gamma\big(\frac{1}{2\sigma_i}+\frac{\alpha_i}{\sigma_i}\frac{4\bar\sigma+d}{2\bar\sigma(1-\delta)}\big)}
{\big(\prod_{i=1}^{d}\sigma_i\big)(1-\delta) \Gamma\big(\frac{d}{2\bar\sigma} +(2+\frac{d}{2\bar\sigma})\frac{\delta}{1-\delta}\big)}\int_0^1 (1-u)^2u^{\frac{d}{2\bar\sigma}+\frac{(4\bar\sigma+d)\delta}{2\bar\sigma(1-\delta)}-1}du\\
&= \frac{2\prod_{i=1}^{d}\Gamma(\kappa_i)}{\big(\prod_{i=1}^{d}\sigma_i\big)(1-\delta) \Gamma(\kappa+3)}
 = \frac{2\prod_{i=1}^{d}\Gamma(\kappa_i)}{\big(\prod_{i=1}^{d}\sigma_i\big)(1-\delta)(\kappa+2) \Gamma(\kappa+2)}
= \frac{2\pi^{d} C(d,\ssigma,\aalpha)}{\kappa+2}.
\end{align*}
Therefore,
$$
I_0(T)\sim \frac{2C(d,\ssigma,\aalpha)}{\kappa+2} T^{\frac{4\delta\bar\sigma+d}{2(1-\delta)\bar\sigma}}.
$$
Very similar computations imply that, as $T\to\infty$, we have
\begin{align*}
I_1(T)\sim\frac{T^{\frac{4\delta\bar\sigma+d}{2(1-\delta)\bar\sigma}}}{\pi^d}\frac{\prod_{i=1}^{d}\Gamma(\kappa_i)}{ \big(\prod_{i=1}^{d}\sigma_i\big)(1-\delta)\Gamma(\kappa+2)}= C(d,\ssigma,\aalpha) T^{\frac{4\delta\bar\sigma+d}{2(1-\delta)\bar\sigma}}.
\end{align*}

Note now that (\ref{eq:24}) is equivalent to $n^2T^2I_0(T)\sim 8T^4 (I_1(T)-I_0(T))^2z^2_{1-\gamma/2}$. Using the asymptotic equivalents for
$I_0$ and $I_1$ we have derived above, one directly checks that the value of $T_{n,\gamma}$ proposed in Proposition~\ref{premierex} satisfies (\ref{eq:24}).
Furthermore, since (\ref{eq:25}) is equivalent to $(r_{n,\gamma}^*)^2= I_1(T_{n,\gamma})/T_{n,\gamma}I_2(T_{n,\gamma})$, we get
$r_{n,\gamma}^*=C_\gamma^*r_n^*(1+o(1))$, as announced in proposition.

It remains to check that for the sequence $T_{n,\gamma}\asymp n^{\frac{4\bar\sigma(1-\delta)}{4\bar\sigma+d}}$ conditions
\textbf{[C1]}, \textbf{[C2]}, \textbf{[C5]}, \textbf{[C8]} and \textbf{[C9]} are fulfilled.
Using  the same method as the
one used above to evaluate $I_0$, we get
\begin{equation}\label{N(T)}
|\mcN(T_{n,\gamma})|\asymp n^{\frac{2d}{4\bar\sigma+d}}\quad \text{ and } \quad
M(T_{n,\gamma})=\sum_{\bl\in\mcN(T_{n,\gamma})} q_\bl^2\asymp n^{\frac{2(4\delta\bar\sigma+d)}{4\bar\sigma+d}}.
\end{equation}
The assumption $\bar\sigma>d/4$ implies $|\mcN(T_{n,\gamma})|\log|\mcN(T_{n,\gamma})|=o(n)$ and, as a
consequence, conditions \textbf{[C4]} and \textbf{[C8]} are true.
Furthermore, the second relation in (\ref{N(T)}) combined with $\delta<1$ implies \textbf{[C2]}. Condition \textbf{[C5]} follows
from the fact that all the nonzero entries of $\bq$ are lower-bounded by $1$.

In order to check \textbf{[C1]} and \textbf{[C9]}, we need to find an upper bound for
$\max_{l\in \mcN(T_{n,\gamma})} q_\bl$. In the following calculations, the term $C$ is
a constant which depends only on $d$, $\aalpha$ and $\ssigma$ and can vary from  line to line.
Let $l \in \NL(T)$, then  $c_\bl\leq Tq_\bl$, which  implies, for every $i=1,\ldots,d$,
$l_i^{2(\sigma_i-\alpha_i)}\leq C T\prod_{j\neq i}\nolimits l_j^{2\alpha_j}$.
In particular
\begin{equation}\label{l1}
l_1^{2(\sigma_1-\alpha_1)}\leq C T\prod_{j\neq 1}l_j^{2\alpha_j},
\quad l_2^{2(\sigma_2-\alpha_2)}\leq C T\prod_{j\neq 2}l_j^{2\alpha_j},
\quad l_3^{2(\sigma_3-\alpha_3)}\leq C T\prod_{j\neq 3}l_j^{2\alpha_j}.
\end{equation}
Injecting the first inequality of (\ref{l1}) in the second one, we obtain
$$
l_2^{2(\sigma_2-\alpha_2)}
    \leq C T\Big(\prod_{j\ge 3}l_j^{2\alpha_j}\Big)\Big(T\prod_{j\ge 2}l_j^{2\alpha_j}\Big)^{\frac{\alpha_1}{\sigma_1-\alpha_1}}
    \leq C \Big(T\prod_{j\ge 3}l_j^{2\alpha_j}\Big)^{\frac{\sigma_1}{\sigma_1-\alpha_1}}\Big(l_2^{2\alpha_2}\Big)^{\frac{\alpha_1}{\sigma_1-\alpha_1}}.
$$
Hence
\begin{equation}\label{ll2}
l_2^{2\sigma_2}\leq  C \Big(T\prod_{j\geq 3}\nolimits l_j^{2\alpha_j}\Big)^{\frac{1}{1-\alpha_1/\sigma_1-\alpha_2/\sigma_2}}
\end{equation} and by symmetry,
\begin{equation}\label{ll1}
l_1^{2\sigma_1}\leq  C \Big(T\prod_{j\geq 3}\nolimits l_j^{2\alpha_j}\Big)^{\frac{1}{1-\alpha_1/\sigma_1-\alpha_2/\sigma_2}}.
\end{equation}
Next, using (\ref{ll2}), (\ref{ll1}) and the third inequality in (\ref{l1}), we get
$$
l_3^{2\sigma_3}\leq  C \Big(T\prod_{j\geq 4}\nolimits l_j^{2\alpha_j}\Big)^{\frac{1}{1-\alpha_1/\sigma_1-\alpha_2/\sigma_2-\alpha_3/\sigma_3}}.$$
Iterations of the previous process lead to the inequality $\max_j l_j^{2\sigma_j}\leq  C T^{{1}/{(1-\delta)}}$. Therefore,
$\max_{\bl\in\mcN(T)} q_\bl= C \prod_{j=1}^d\nolimits l_j^{2\alpha_j}\leq C T^{\frac{\delta}{1-\delta}}$.
Combining this bound with $T_{n,\gamma}\asymp n^{\frac{4\bar\sigma(1-\delta)}{4\bar\sigma+d}}$  and  (\ref{N(T)}) yields the inequalities of
{\bf [C1]} and {\bf [C9]}.

\section{Proof of Proposition \ref{deuxiemeex}}
As in the previous subsection, we begin with the calculation of $I_0$.
Setting  $x_{\bl,i}=\frac{2\pi l_i}{T^{\frac{1}{\sigma-1}}} $  and using the same method to get an integral, we have
\begin{align*}
I_0&=T^{\frac{4}{\sigma-1}}\sum_{\bl\in\ZZ^d} \Big[ \|\bx_{\bl}\|_2^2-(\bbeta^\top\bx_{\bl})^2-\|\bx_{\bl}\|_{2\sigma}^{2\sigma}\Big]_+^2\\
&\sim \frac{T^{\frac{d+4}{\sigma-1}}}{(2\pi)^d}\int_{\RR^d}
\Big[ \|\bx\|_2^2-(\bbeta^\top\bx)^2-\|\bx\|_{2\sigma}^{2\sigma}\Big]_+^2d\bx.
\end{align*}
This implies the asymptotic relation $I_0\sim  C_0T^{\frac{d+4}{\sigma-1}}$  with  the constant
$C_0=\frac{1}{(2\pi)^d}\int_{\RR^d}
\big[ \sum_{i=1}^d (x_i-\frac{1}{d}\sum_{j=1}^d x_j)^2-\sum_{i=1}^d x_i^{2\sigma}\big]_+^2d\bx$.
Similar computations yield $I_1\sim C_1T^{(d+4)/(\sigma-1)}$ and $I_2\sim  C_2T^{(d+4)/(\sigma-1)}$,
where $C_1$ and $C_2$ have the values given in the paragraph preceding  the proposition.

The rest of the proof can be carried out exactly in the same way as the proof of the previous proposition, based
on the relation $N(T)\asymp T^{\frac{d}{\sigma-1}}$ and $M(T)\asymp T^{\frac{d+4}{\sigma-1}}$.

\section{Proofs of results stated in Section~\ref{sec:NPF}}

\subsection{Proof of Theorem \ref{nonpositiveupper}}

The arguments are almost the same as in the proof of Theorem~\ref{thm_1}. We use the array $\bw_n$ with entries
$w_l=q_l\1_{\{l\in\mcN(T)\}}/M(T)^{1/2}$ and the kernel
$G_n(\bt_1,\bt_2)=\sum_{l\in \mcL} w_l\varphi_l(\bt_1)\varphi_l(\bt_2)$
in order to define the linear U-test statistic:
$$
U_n=\binom{n}{2}^{-1/2} \sum_{1\le i<j\le n} x_ix_j G_n(\bt_i,\bt_j).
$$
We write as $U_n=U_{n,0}+U_{n,1}+U_{n,2}$, where
\begin{align*}
U_{n,0}&= \binom{n}{2}^{-1/2} \sum_{i<j} \xi_i\xi_j G_n(\bt_i,\bt_j),\quad
U_{n,1}= \binom{n}{2}^{-1/2} \sum_{i<j} (\xi_if(\bt_j)+\xi_j f(\bt_i)) G_n(\bt_i,\bt_j)
\end{align*}
and $U_{n,2}= \binom{n}{2}^{-1/2} \sum_{i<j} f(\bt_i)f(\bt_j) G_n(\bt_i,\bt_j)$. The first and
the second moments of this U-statistic are described in the next result, in which we use the notation
$\ttT_\bw[f]=\sum_l w_l \theta_l[f]\varphi_l$.

\begin{lemma}\label{lem:AD2}
Let  $\bw_n=(w_{l,n})_{l\in\mcL}$ be an array containing only a finite number of nonzero entries and such that
$\sum_{l\in\mcL} w_{l,n}^2=1$. Let $\mcL(\bw_n)$ be the support of $\bw_n$. The expectation of the U-statistic $U_{n}$ is given by:
\begin{align*}
E_f[U_{n}]=E_f[U_{n,2}]=\bar h_n[f,\bw_n]= \Big(\frac{n(n-1)}{2}\Big)^{1/2}\sum_l w_{l}\theta_l^2[f].
\end{align*}
Furthermore, if {\bf [D2]} holds true, then $E[U_{n,0}^2]=1$, $E[U_{n,1}^2]\le 2D_2\|\bw_n\|_\infty^2\|\bw_n\|_0\|f\|_2^2$ and
\begin{align*}
\text{\rm Var}[U_{n,2}]
	&\leq D_2 \|\bw_n\|_\infty^2 \|\bw_n\|_0\|f\|_4^4+\frac{2n}{3}\|f\cdot \ttT_\bw[f]\|_2^2.
\end{align*}
\end{lemma}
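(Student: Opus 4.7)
The plan is to expand each $x_i = f(\bt_i)+\xi_i$ in the definition of $U_n$, producing the stated decomposition $U_n = U_{n,0}+U_{n,1}+U_{n,2}$, and then analyze each piece separately by squaring the double sum and classifying ordered pairs $\bigl(\{i,j\},\{i',j'\}\bigr)$ by the size of $\{i,j\}\cap\{i',j'\}\in\{0,1,2\}$. The means are immediate: $U_{n,0}$ and $U_{n,1}$ contain at least one factor $\xi_i$ that is independent of everything else and has zero mean, so they vanish; a single summand of $U_{n,2}$ integrates against the uniform design to $\sum_l w_l\theta_l^2[f]$ by orthonormality of $\{\varphi_l\}$, yielding the formula for $E[U_n]$.

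For the second moments of $U_{n,0}$ and $U_{n,1}$, the independence and zero mean of the $\xi_i$'s kill every ordered pair of pairs except those with $\{i,j\}=\{i',j'\}$. This gives $E[U_{n,0}^2] = E[G_n^2(\bt_1,\bt_2)] = \sum_l w_l^2 = 1$ and $E[U_{n,1}^2] = 2\,E[f(\bt_1)^2 G_n^2(\bt_1,\bt_2)]$. Expanding $G_n^2$ and integrating out $\bt_2$ by orthonormality leaves $\int f^2 \sum_l w_l^2 \varphi_l^2 \le \|\bw_n\|_\infty^2 D_2\|\bw_n\|_0 \|f\|_2^2$ by \textbf{[D2]}. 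For $\mathrm{Var}(U_{n,2})$, the three intersection classes contribute with multiplicities $1,\,2(n-2),\,\binom{n-2}{2}$; the disjoint-pairs contribution $A_3 = (\sum_l w_l\theta_l^2[f])^2$ is exactly cancelled (with a negative residue of order $-n$) by subtracting $(E[U_{n,2}])^2 = \binom{n}{2}A_3$, so it may be dropped. A direct calculation identifies the single-overlap piece as
\[
A_2 = \int f^2(\bt)\,\ttT_\bw[f]^2(\bt)\,d\bt = \|f\cdot\ttT_\bw[f]\|_2^2,
\]
which accounts for the second term in the claimed bound up to the multiplicative factor of order $n$.

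The main obstacle is bounding the two-overlap contribution $A_1 = E[f(\bt_1)^2 f(\bt_2)^2 G_n^2(\bt_1,\bt_2)]$ without incurring a prohibitive $\|\bw_n\|_0^2$ factor; a direct Cauchy--Schwarz applied to $G_n$ is too crude. The idea is to expand $G_n^2 = \sum_{l,l'} w_l w_{l'} (\varphi_l\varphi_{l'})(\bt_1)(\varphi_l\varphi_{l'})(\bt_2)$ and rewrite
\[
A_1 = \sum_{l,l'} w_l w_{l'} \langle f\varphi_l,\, f\varphi_{l'}\rangle^2,
\]
then symmetrize with $|w_l w_{l'}|\le \tfrac12(w_l^2+w_{l'}^2)$ and apply Bessel's inequality for the orthonormal system $\{\varphi_{l'}\}_{l'\in\mcL}$ to the function $f\varphi_l\in L^2(\Delta)$:
\[
\sum_{l'\in\mcL(\bw_n)} \langle f\varphi_l,\varphi_{l'}\rangle^2 \le \|f\varphi_l\|_2^2 = \int f^4\varphi_l^2.
\]
Summing over $l$ with weight $w_l^2$, extracting $\|\bw_n\|_\infty^2$, and invoking \textbf{[D2]} on $\sum_{l\in\mcL(\bw_n)}\varphi_l^2$ delivers $A_1 \le D_2\|\bw_n\|_\infty^2\|\bw_n\|_0\|f\|_4^4$, which assembled with the $A_2$ piece yields the stated variance bound.
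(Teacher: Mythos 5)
Your decomposition, the classification of pairs of index-pairs by the size of their overlap, the exact identification of the one-overlap contribution to $\mathrm{Var}(U_{n,2})$ as a multiple of $\|f\cdot\ttT_\bw[f]\|_2^2$, and the symmetrization-plus-Bessel treatment of the two-overlap term $A_1$ are exactly the paper's route: the paper's proof of this lemma defers to Lemma~\ref{lem:AD1}, where $A_{n,1}$ is bounded by precisely your Bessel argument, and only the one-overlap term is recomputed exactly because the weights may now be negative. Two smaller points. First, in the Bessel step the function should be $f^2\varphi_l$, not $f\varphi_l$: as written, $\|f\varphi_l\|_2^2=\int f^2\varphi_l^2\neq\int f^4\varphi_l^2$; the intended inequality $\sum_{l'}\langle f^2\varphi_l,\varphi_{l'}\rangle^2\le\|f^2\varphi_l\|_2^2=\int f^4\varphi_l^2$ is what you need and is correct. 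Second, your multiplicity $2(n-2)$ for the one-overlap class is the right count (there are $6\binom{n}{3}$ ordered pairs of index-pairs sharing exactly one element), which produces the constant $2(n-2)\le 2n$ rather than the stated $2n/3$; the paper's coefficient $\frac{4}{n(n-1)}\binom{n}{3}$ undercounts by a factor of $3$, and the discrepancy only alters the absolute constant $B_2$ downstream.

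There is, however, a genuine gap in your treatment of $E[U_{n,1}^2]$: it is not true that independence and zero mean of the $\xi_i$'s kill every off-diagonal pair. For two index-pairs sharing the index $i$, the product contains the term $\xi_i^2\,f(\bt_j)f(\bt_k)G_n(\bt_i,\bt_j)G_n(\bt_i,\bt_k)$, whose expectation equals $E\big[\ttT_\bw[f](\bt_i)^2\big]=\sum_l w_l^2\theta_l[f]^2$ and does not vanish unless $\theta_l[f]=0$ for all $l\in\mcL(\bw_n)$ --- which is not assumed here, since under the indefinite null one only has $\sum_l q_l\theta_l[f]^2=0$. The correct identity is
\[
E[U_{n,1}^2]=2E\big[f(\bt_1)^2G_n^2(\bt_1,\bt_2)\big]+2(n-2)\sum_{l}w_{l}^2\theta_l[f]^2,
\]
so the stated bound must be supplemented by a term of order $n\sum_l w_l^2\theta_l[f]^2$; with $w_l\propto q_l$ and \textbf{[D1]} this is $O(n\|\bw_n\|_0^{-1}\|f\|_2^2)$, and it can be absorbed into the $B_2\,nM(T)^{-1}$ part of the variance budget in Theorem~\ref{nonpositiveupper} under a mild additional bound on $\|\ttT_Q[f]\|_2$. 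The paper's own proof makes the same omission (it writes $E[U_{n,1}^2]$ as the expectation of a single squared kernel), so you are faithful to it, but the step as you state it is false and the extra term needs to be carried.
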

\begin{proof}
This result can be proved along the lines of the proof of Lemma~\ref{lem:AD1}. The only difference is in the evaluation
of the term $A_{n,2}$, for which we have
\begin{align*}
A_{n,2}&=\frac{4}{n(n-1)}\binom{n}{3} \sum_{l,l'\in \mcL(\bw_n)}w_{l}w_{l'}\theta_l[f]\theta_{l'}[f]
    	   \Big\{\int f(\bt)^2 \varphi_l(\bt)\varphi_{l'}(\bt)\,d\bt\Big\}\\
       &=\frac{2(n-2)}{3}\Big\{\int f(\bt)^2\Big( \sum_l w_{l}\theta_l[f]\varphi_l(\bt)\Big)^2\,d\bt\Big\}
       \le \frac{2n}{3}\|f\cdot \ttT_\bw[f]\|_2^2.
\end{align*}
This yields the desired result.
\end{proof}

Let us now study the type I and type II error probabilities of the test $\phi_n(T)=\1_{\{|U_n(T)|> u\}}$.
\paragraph{Evaluation of type I error}
Using Tchebychev's inequality, for every $u>|E[U_n(T)]|$, we have
\begin{align*}
\sup_{f\in\mcF_0}P_f\big(|U_n(T)|>u\big)
    &\leq \sup_{f\in\mcF_0}P_f\Big(\big|U_n(T)-E[U_n(T)]\big|>u-\big|E[U_n(T)]\big|\Big)\\
    &\leq \sup_{f\in\mcF_0} \frac{\text{Var}(U_n(T))}{(u-|E[U_n(T)]|)^2}.
\end{align*}
Let us denote $\nu_{n,T}= nT^{-1}(2M(T))^{-1/2}$. Using Lemma~\ref{lem:AD2}, we get
\begin{align*}
|E[U_n(T)]|&\le \frac{n}{\sqrt{2M(T)}}\Big|\sum_{l\in\mcN(T)} q_l\theta_l^2[f]\Big|=T\nu_{n,T}\Big|\sum_{l\in\mcN(T)} q_l\theta_l^2[f]\Big|.
\end{align*}
Since, under $H_0$, we have $Q[f]=\sum_l q_l\theta_l[f]^2=0$ and $\sum_l c_l\theta_l[f]^2\le 1$, the last sum can be bounded as follows:
$\big|\sum_{l\in\mcN(T)} q_l\theta_l^2[f]\big|=\big|\sum_{l:|q_l|<c_l/T} q_l\theta_l^2[f]\big|\le T^{-1}\sum_l c_l\theta_l[f]^2\le T^{-1}$.
Thus, $|E[U_n(T)]|=|\bar h_n[\bw_n,f]|\le \nu_{n,T}$. Combining this bound with those of Lemma~\ref{lem:AD2}, we arrive at
\begin{align*}
\sup_{f\in\mcF_0}P_f(\phi_n(T)=1)
    &\leq \frac{3(1+2D_1 D_2 D_3^2+D_1 D_2 D_3^4+2n D_4/(3M(T))}{(u-\nu_{n,T})^2}\\
    & =   \frac{B_1+B_2nM(T)^{-1}}{2(u-\nu_{n,T})^2}
\end{align*}
Consequently, if we choose $u\ge \nu_{n,T}+\big(B_1+B_2 nM(T)^{-1}\big)^{1/2}\gamma^{-1/2}$, then
$\sup_{f\in\mcF_0}P_f(\phi_n(T)=1) \leq  \frac{\gamma}{2}$.

\paragraph{Evaluation of  type II error} Using similar arguments, we get
\begin{align*}
\sup_{f\in\mcF_1(\rho)} P_f(\phi_n(T)=0)&=\sup_{f\in\mcF_1(\rho)}P_f(|U_n(T)|\leq u)\\
&\le \sup_{f\in\mcF_1(\rho)}P_f\big(|E[U_n(T)]|-|U_n(T)-E[U_n(T)]|\leq u\big)\\
&\le \sup_{f\in\mcF_1(\rho)}P_f\big(T\nu_{n,T}|Q[f]|-\nu_{n,T}-\big|U_n(T)-E[U_n(T)]\big|\leq u\big)\\
&\le P\big(T\nu_{n,T}\rho^2-\big|U_n(T)-E[U_n(T)]\big|\leq u+\nu_{n,T}\big).
\end{align*}
This can also be written as:
\begin{align*}
\sup_{f\in\mcF_1(\rho)} P_f(\phi_n(T)=0)
\le P\Big(\big|U_n(T)-E[U_n(T)]\big|\geq (T\rho^2-1)\nu_{n,T}-u\Big).
\end{align*}
Using the Tchebychev inequality and the evaluations obtained in Lemma~\ref{lem:AD2}, we get
\begin{align*}
\sup_{f\in\mcF_1(\rho)} P_f(\phi_n(T)=0)
\le \frac{B_1+B_2nM(T)^{-1}}{2\big((T\rho^2-1)\nu_{n,T}-u\big)^2}.
\end{align*}
Clearly, the right hand-side of this inequality is lower than $\gamma/2$ if
$$
\rho^2\ge \bigg[u+\frac1{\gamma^{1/2}}\bigg(B_1+\frac{B_2n}{M(T)}\bigg)^{1/2}\bigg]\frac{{1}}{T\nu_{n,T}}+\frac{1}{T}.
$$
This completes the proof of Theorem~\ref{nonpositiveupper}.

\subsection{Proof of Corollary~\ref{cor}}

It is enough to remark that (since $M(\cdot)$ is increasing and $T_n\le T_n^0$)
$$
\frac{\sqrt{M(T_n)}}{n}\le \frac{\sqrt{M(T_n^0)}}{n} \asymp \frac{1}{T_n^0}\le \frac1{T_n}
$$
and $\frac1{\sqrt n}\le T_n^{-1}$. In view of these inequalities, the claim of the corollary
immediately follows from Theorem~\ref{nonpositiveupper}.

\subsection{Proof of Theorem \ref{nonpositivelower}}

We start by proving that the minimax rate of separation is lower bounded by $n^{-1/4}$.
Let $l_a=\text{argmin}_{l\in\mcL_a}\{c_l\}$ for $a\in\{+,-\}$. We define two functions
$f_0$ and $f_1$ as linear combinations of the basis functions $\varphi_{l_-}$ and $\varphi_{l_+}$. More precisely,
$f_i=\theta_{i,-}\varphi_{l_-}+\theta_{i,+}\varphi_{l_+}$, for $i=0,1,$
with
$$
\theta_{0,-}^2=\frac{|q_{l_+}|}{c_{l_-}|q_{l_+}|+c_{l_+}|q_{l_-}|} \quad \theta_{0,+}^2=\frac{|q_{l_-}|}{c_{l_-}q_{l_+}+c_{l_+}|q_{l_-}|}
$$
and, for some $z>0$,
$$
\theta_{1,-}=\theta_{0,-} \quad \theta_{1,+}^2=\theta_{0,+}^2-z/\sqrt{n}.
$$
One easily checks that $f_0\in \mcF_0$ and $f_1\in\mcF_1(r_n)$ with $r_n^2=zq_{l_+}/\sqrt{n}$.
Furthermore, the Kullback-Leibler divergence $K(P_{f_0}, P_{f_1})= \int \log \frac{dP_{f_0}}{dP_{f_1}} dP_{f_0}$
between the probability measures $P_{f_0}$ and $P_{f_1}$ can be bounded as follows:
\begin{align*}
K(P_{f_0}, P_{f_1})
    &= E\bigg( E_{f_0}\bigg[ \log \frac{dP_{f_0}}{dP_{f_1}}(x_1,\ldots,x_n,\bt_1,\ldots,\bt_n)\bigg|\bt_1,\ldots,\bt_n\bigg]\bigg)\\
    &= E\bigg( E_{f_0}\bigg[ \sum_{i=1}^n (x_i-f_1(\bt_i))^2-(x_i-f_0(\bt_i))^2\bigg|\bt_1,\ldots,\bt_n\bigg]\bigg)\\
    &=nE\big[\big( f_0(\bt_1)-f_1(\bt_1)\big)^2\big]=n\sum_{a\in\{+,-\}}\nolimits(\theta_{0,a}-\theta_{1,a})^2\\
    &=n\big(\theta_{0,+}-|{\theta_{0,+}^2-zn^{-1/2}}|^{1/2}\big)^2\le z^2(2\theta_{0,+})^{-2}.
\end{align*}
To conclude, it suffices to use inequality (2.74) from \citep{tsybakov2009introduction}, which implies that
$\gamma_n(\mcF_0,\mcF_1(r_n))\ge 0.25 e^{-z^2(2\theta_{0,+})^{-2}}=\gamma$ for $z=2\theta_{0,+}[\ln(4\gamma)^{-1}]^{1/2}$.

It remains to prove the second assertion of the theorem. To ease notation, we write $T_n$ instead of $T_n^0$ and set
$$
Q_a[f]=\sum_{l\in {\mcL_a}}\nolimits q_l\theta_l^2[f]\qquad \text{ and } \qquad
\mcF_a=\{ f : Q_a[f]=0\},\qquad \text{for}\qquad a\in\{+,-\}.
$$
Let us assume that $M_+(T_n)\geq M_-(T_n)$.
We use the fact that testing $Q[f]=0$ against $|Q[f]|\geq r_n^2$, with $f\in \Sigma$ is harder than testing $Q_+[f]=0$ against
$Q_+[f]\geq r_n^2$, with $f\in \mcF_-$.

The rest of the  proof follows the same steps as those of the proof of Theorem \ref{thm_2}.
As indicated in  Remark \ref{tauxseulementlower}, we use as $\pi_n$ the simplified prior
for which $\theta_l$'s are independent Gaussian random variables with zero mean and variance
$a_l=\frac{q_l}{2T_nM_+(T_n)}\1_{\{l\in {\mcL_+}\cup \mathcal{N}(T_n)\}}$. It is an easy exercice
to show that conditions \textbf{[L1]}-\textbf{[L5]} of Proposition~\ref{prop_4} are fulfilled with
$\delta=1/2$. This completes the proof of the theorem.

\bibliography{deuxiemearticle_derniere_version}

\end{document}